\newtheorem{lemma}{Lemma}[section]
\newtheorem{proposition}[lemma]{Proposition}
\newtheorem{theorem}[lemma]{Theorem}
\newtheorem*{theorem*}{Theorem}
\theoremstyle{definition}
\newtheorem{claim}[lemma]{Claim}
\newtheorem{corollary}[lemma]{Corollary}
\newtheorem{definition}[lemma]{Definition}
\theoremstyle{remark}
\newtheorem*{notation*}{\bf Notation}
\newtheorem{remark}[lemma]{\bf Remark}
\newtheorem*{convention*}{\bf Convention}
\newtheorem*{question*}{Question}
\theoremstyle{plain}
\newtheorem{lemin}{Lemma}
\newtheorem{propin}[lemin]{Proposition}
\newtheorem{thmin}[lemin]{Theorem}
\theoremstyle{definition}
\newtheorem{corolin}[lemin]{Corollary}
\theoremstyle{remark}
\newcommand{\Hom}{{\rm Hom}}
\newcommand{\Ker}{{\rm Ker}}
\newcommand{\rank}{{\rm rank}}
\newcommand{\Span}{{\rm Span}}
\newcommand{\Spec}{{\rm Spec}}
\newcommand{\CCC}{{\mathbb C}}
\newcommand{\FF}{{\mathbb F}}
\newcommand{\RR}{{\mathbb R}}
\newcommand{\ZZ}{{\mathbb Z}}
\newcommand{\PP}{{\mathbb P}}
\newcommand{\NN}{{\mathbb N}}
\newcommand{\QQ}{{\mathbb Q}}
\newcommand{\KK}{{\mathbb K}}
\newcommand{\LL}{{\mathbb L}}
\newcommand{\fm}{{\mathfrak m}}
\newcommand{\fn}{{\mathfrak n}}
\newcommand{\CL}{{\mathcal L}}
\newcommand{\CO}{{\mathcal O}}
\newcommand{\CN}{{\mathcal N}}
\newcommand{\CJ}{{\mathcal J}}
\newcommand{\CK}{{\mathcal K}}
\newcommand{\Id}{{{\mathchoice {\rm 1\mskip-4mu l} {\rm 1\mskip-4mu l}
      {\rm 1\mskip-4.5mu l} {\rm 1\mskip-5mu l}}}}
\makeatletter \@addtoreset {equation}{section}
\renewcommand\theequation
\z@ \arabic{section}.\arabic{subsection}.\arabic{equation}
  \else \arabic{section}.\arabic{equation} \fi}
\begin{document}

\author{Yaron Ostrover\footnote{The first named author was supported by NSF
grant DMS-0706976.} \ and Ilya Tyomkin}
\title{On the quantum homology algebra of toric Fano manifolds} %\footnotetext[2]{Preliminary version}}
\maketitle

\begin{abstract}
 \noindent In this paper we study certain algebraic properties of
the quantum homology algebra for the class of symplectic toric Fano
manifolds. In particular, we examine the semi-simplicity of the
quantum homology algebra, and the more general property of
containing a field as a direct summand. Our main result provides an
easily-verifiable sufficient condition for these properties which is
independent of the symplectic form. Moreover, we answer two
questions of Entov and Polterovich negatively by providing examples
of toric Fano manifolds with non semisimple quantum homology
algebra, and others in which the Calabi quasimorphism in
non-unique.
\end{abstract}

\section{Introduction.} \label{sec-intr}

The quantum homology algebra $QH_*(X,\omega)$ of a symplectic
manifold $(X,\omega)$ is, roughly speaking, the singular homology
of $X$ endowed with a modified algebraic structure, which is a
deformation of the ordinary intersection product. It was
originally introduced by the string theorists Vafa and
Witten~\cite{V1},\cite{W} in the context of topological quantum
field theory, followed by a rigorous mathematical construction by
Ruan and Tian~\cite{RT2} in the symplectic setting, and by
Kontsevich and Manin~\cite{KM} in the algebra-geometric setting.

Since its introduction in $1991$, there has been a great deal of
interest in the study of quantum homology from various
disciplines, both by physicists and mathematicians. In particular,
the quantum homology algebra plays an important role in symplectic
geometry where, for example, it is ring-isomorphic to the Floer
homology. Recently, the study of quantum homology had a profound
impact in the realm of algebraic geometry, where ideas from string
theory have led to astonishing predictions regarding enumerative
geometry of rational curves. Furthermore, quantum homology
naturally arises in string theory, where it is an essential
ingredient in the A-model side of the mirror symmetry phenomenon.
We refer the reader to~\cite{MS} and~\cite{HKLPTVVZ} and the
references within for detailed expositions to the theory of
quantum homology.

In this paper we focus on the following algebraic properties of
the quantum homology algebra. Recall that a finite dimensional
commutative algebra over a field is said to be {\it semisimple}
if it decomposes into a direct sum of fields. A more general
property is the following: a finite dimensional commutative
algebra $A$ is said to contain a field as a direct summand if it
splits (as an algebra) into a direct sum $A=A_1 \oplus A_2$, where
$A_1$ is a field and no assumptions on the algebra $A_2$ are
imposed. We wish to remark that there are several different
notions of semi-simplicity in the context of quantum homology (see
e.g.~\cite{Do},~\cite{KM}). The semi-simplicity we consider here
was first examined by Abrams~\cite{A}.

Our main motivation to study the above mentioned algebraic
properties of the quantum homology algebra is the recent works by
Entov and Polterovich on Calabi quasimorphisms and symplectic
quasi-states (\cite{EP1},~\cite{EP2},~\cite{EP3},~\cite{EP4}), in
which the algebraic structure of the quantum homology plays a key
role. More precisely, our prime object of interest is the
subalgebra $QH_{2d}(X,\omega)$, i.e. the graded part of degree
$2d = {\rm dim_{\RR}} \, X$ of the quantum homology
$QH_*(X,\omega)$. This subalgebra is finite dimensional over a
field  $\KK^{\scriptscriptstyle \downarrow}$ (see Subsection~\ref{qh-sym-def} for the
definitions). In what follows, we say that $QH_{2d}(X,\omega)$ is
semisimple if it is semisimple as a $\KK^{\scriptscriptstyle
\downarrow}$-algebra.

The following theorem has been originally proven in the case of
monotone symplectic manifolds in~\cite{EP1} (using a slightly
different setting), then generalized by the first named author
in~\cite{O} to the class of rational strongly semi-positive
symplectic manifolds that satisfy some technical condition which
was eventually removed in~\cite{EP2}.
\begin{theorem*} \label{EPO-thm}
Let $(X,\omega)$ be a rational\footnote{It is very plausible that
the rationality assumption can be removed due to the recent works of
Oh~\cite{Oh}, and
Usher~\cite{Us}.} % to a recent work by Usher~\cite{Us}.}
strongly semi-positive symplectic manifold of
dimension $2d$ such that the quantum homology subalgebra
$QH_{2d}(X,\omega) \subset QH_*(X,\omega)$ is semisimple. Then $X$ admits a Calabi quasimorphism and a symplectic
quasi-state.
\end{theorem*}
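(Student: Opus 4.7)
The plan is to follow the Entov--Polterovich construction of Calabi quasimorphisms from idempotents in quantum homology, adapted to the rational strongly semi-positive setting in the generality established in \cite{O} and \cite{EP2}. Since $QH_{2d}(X,\omega)$ is a finite-dimensional semisimple commutative $\KK^{\scriptscriptstyle\downarrow}$-algebra, it decomposes as a direct sum of fields $F_1 \oplus \cdots \oplus F_r$; let $e \in QH_{2d}(X,\omega)$ be the unit of one such summand. Then $e$ is an idempotent satisfying $e \ast e = e$, and the principal ideal generated by $e$ is a field under the quantum product.

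The next step invokes the machinery of Hamiltonian Floer-theoretic spectral invariants. For each normalized time-dependent Hamiltonian $H \colon S^1 \times X \to \RR$, let $c(e, H)$ denote the spectral number associated to the class $e$. These satisfy Hofer--Lipschitz continuity in $H$, homotopy invariance that makes them descend to a function on the universal cover $\widetilde{\mathrm{Ham}}(X,\omega)$, and the fundamental triangle inequality $c(a \ast b, H \# K) \le c(a, H) + c(b, K)$. The rationality and strong semi-positivity hypotheses are what guarantee that the Floer-theoretic construction goes through without virtual perturbations and that the action spectrum is nowhere dense. Set
$$\mu(\tilde{\phi}) \;=\; -\mathrm{vol}(X)\,\lim_{k\to\infty}\frac{c(e, \tilde{\phi}^{k})}{k},$$
where $c(e, \tilde{\phi})$ is the infimum of $c(e, H)$ over normalized Hamiltonians $H$ generating $\tilde{\phi}$; subadditivity derived from $e \ast e = e$ combined with the triangle inequality ensures that this limit exists.

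The main step is to show that $\mu$ is a homogeneous quasimorphism on $\widetilde{\mathrm{Ham}}(X,\omega)$. The upper bound $c(e, \tilde\phi\,\tilde\psi) \le c(e, \tilde\phi) + c(e, \tilde\psi)$ follows immediately from $e \ast e = e$ and the triangle inequality, so the real task is to produce a matching lower bound up to a universal additive constant. This is the main obstacle, and it is precisely where the field-summand hypothesis is essential. The argument relies on the Poincar\'e-duality relation between spectral invariants of $\tilde\phi$ and $\tilde\phi^{-1}$ together with the observation that, inside the field $F_i$, the idempotent $e$ acts as the multiplicative identity with no nontrivial zero divisors. This allows one to convert the one-sided triangle inequality into a two-sided estimate with uniformly bounded defect, yielding the quasimorphism axiom; without the field structure one would only get a sub-additive functional.

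Finally, the Calabi property of $\mu$ reduces to showing that for Hamiltonians supported in a displaceable subset $U \subset X$, the asymptotic spectral invariant coincides with the Calabi invariant up to the chosen normalization. This follows from the standard displaceability argument: a Hamiltonian disjoining $U$ annihilates the non-constant contributions to the action functional in the asymptotic limit, so only the mean value survives. The symplectic quasi-state $\zeta \colon C(X) \to \RR$ is then obtained by evaluating $\mu$ on the lifts of autonomous flows of smooth functions and extending continuously in the $C^{0}$-topology via the Hofer--Lipschitz bound, thereby verifying the normalization, monotonicity, and partial quasi-linearity axioms of an Entov--Polterovich symplectic quasi-state.
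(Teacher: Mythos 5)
The paper does not prove this theorem itself; it states it as background, crediting the proof to Entov--Polterovich \cite{EP1}, the generalization in \cite{O}, and the removal of a technical hypothesis in \cite{EP2}. Your sketch faithfully reproduces the strategy of those references (field summand from semisimplicity, homogenization of the spectral invariant of the idempotent, boundedness of the defect via Poincar\'e duality of spectral numbers and the absence of zero divisors in the field, Calabi property from displaceability, quasi-state by restriction to autonomous Hamiltonians), so the approach is correct and matches what the paper cites; the only cosmetic slip is describing $c(e,\tilde\phi)$ as an infimum over generating Hamiltonians rather than as the common value guaranteed by homotopy invariance.
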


For the definition of Calabi quasimorphisms and symplectic
quasi-states, and detailed discussion of their application in
symplectic geometry we refer the reader to~\cite{EP1},~\cite{EP4}.
We wish to mention here that other than demonstrating applications
to Hofer's geometry and $C^0$-symplectic topology, Entov and
Polterovich used the above theorem to obtain Lagrangian
intersection type results. For example, in~\cite{BEP} they proved
(together with Biran) that the Clifford torus in ${\mathbb C}P^n$
is not displaceable by a Hamiltonian isotopy. In a later
work~\cite{EP3}, they proved the non-displaceability of certain
singular Lagrangian submanifolds, a result which is currently out
of reach for the conventional Lagrangian Floer homology technique.
We refer the reader to~\cite{EP3} for more details in this
direction.

Very recently, McDuff pointed out that the semi-simplicity
assumption in the above theorem can be relaxed to the weaker
assumption that $QH_{2d}(X,\omega)$ contains a field as a direct
summand. Moreover, she showed that in contrast with
semi-simplicity, this condition holds true for one point blow-ups
of non-uniruled symplectic manifolds such as the standard
symplectic four torus $T^4$ (see~\cite{Mc} and~\cite{EP2} for
details), consequently enlarging the class of manifolds admitting
Calabi quasimorphisms and symplectic quasi-states. Thus, in what
follows we will study not only the semi-simplicity of the quantum
homology algebra, but also the more general property of containing
a field as a direct summand.

A different motivation to study the semi-simplicity of the quantum
homology algebra is due to a work of Biran and Cornea.
In~\cite{BC} they showed that in certain cases the semi-simplicity
of the quantum homology implies restrictions on the existence of
certain Lagrangian submanifolds. We refer the reader to~\cite{BC},
Subsection 6.5 for more details.

Finally, a third motivation comes from physics, where in the
symplectic toric Fano case the semi-simplicity of the quantum
homology algebra implies that the corresponding $\CN = 2$
Landau-Ginzburg model is massive. The physical interpretation is
that the theory has massive vacua and the infrared limit of this
model is trivial. See~\cite{HKLPTVVZ} and the references within for
precise definition and discussion.

Examples of symplectic manifolds with semisimple quantum homology
are ${\mathbb C}P^d$ (see e.g.~\cite{EP1}); %, where the quantum homology algebra is in fact a field (see e.g.~\cite{EP1});
complex Grassmannians; and the smooth complex quadric $Q = \{ z_0^2 + \cdots + z_d^2 - z_{d+1}^2
=0\} \subset {\mathbb C}P^{d+1}$ (see~\cite{A} for the last two
examples). As mentioned above, McDuff (see~\cite{Mc}
and~\cite{EP2}) provides a large class of examples of symplectic
manifolds whose quantum homology contains a field as a direct
summand but is not semisimple, by considering the one point
blow-up of a non-uniruled symplectic manifold. Using the
K\"{u}nneth formula for quantum homology, one can show that both
semi-simplicity and the property of containing a field as a direct
summand are preserved when taking products (see~\cite{EP2}).

Another class of examples are toric Fano $2$-folds. Recall that up
to rescaling the symplectic form by a constant factor there are
exactly five symplectic toric Fano $2$-folds: ${\mathbb C}P^1
\times {\mathbb C}P^1$, ${\mathbb C}P^2$, and the blowups of
${\mathbb C}P^2$ at $1,2$ and $3$ points. The following theorem is
a combination of results from~\cite{O} and~\cite{EP2}.
\begin{theorem*} If $(X,\omega)$ is a symplectic toric Fano $2$-fold  then $QH_4(X,\omega)$ is semisimple.
\end{theorem*}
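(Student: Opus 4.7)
The plan is a case-by-case verification based on the classification of symplectic toric Fano $2$-folds into the five manifolds listed. Two of the five cases are already disposed of by material recalled in the paragraph preceding the statement: semisimplicity of $QH_4(\mathbb{C}P^2,\omega)$ is classical, and semisimplicity of $QH_4(\mathbb{C}P^1\times\mathbb{C}P^1,\omega)$ then follows from the K\"unneth formula for quantum homology (also cited above) applied to the semisimple algebra $QH_2(\mathbb{C}P^1,\omega)$. The real content of the theorem is the three remaining cases: the toric blowups of $\mathbb{C}P^2$ at one, two and three fixed points (the Hirzebruch surface $\mathbb{F}_1$ and the del Pezzo surfaces of degrees $7$ and $6$).

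For each of these surfaces I would compute $QH_4(X,\omega)$ explicitly using Batyrev's presentation of the quantum cohomology of a smooth toric Fano manifold. In this presentation $QH^*(X)$ is the quotient of $\KK^{\scriptscriptstyle \downarrow}[D_1,\ldots,D_n]$, where $D_i$ are the toric divisor classes associated to the primitive ray generators $v_1,\ldots,v_n$ of the fan, by the lattice relations $\sum_i\langle m,v_i\rangle D_i=0$ together with the quantum Stanley--Reisner relations, whose Novikov weights are determined by the symplectic areas of the relevant curve classes. After eliminating via the two linear relations this realizes $QH^*(X)$ as the Jacobi ring of a Laurent Landau--Ginzburg potential $W$ in two variables with monomials indexed by the $v_i$, and the top graded piece $QH_4(X,\omega)$ is visible as this entire Jacobi ring (ungraded).

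The core step is therefore to write $W$ down explicitly in each of the three toric structures and verify that it is a Morse function: namely, that the system $\partial_xW=\partial_yW=0$ has exactly $\chi(X)$ distinct solutions in $(\KK^{\scriptscriptstyle \downarrow})^{\times}\!\times(\KK^{\scriptscriptstyle \downarrow})^{\times}$ and that $\det(\partial^2W)$ is nonzero at each. For the one-point blowup $W$ has four monomials and $\chi=4$; for the two-point blowup, five monomials and $\chi=5$; for the three-point blowup, six monomials and $\chi=6$. Because the quantum parameters carry nonzero Novikov valuations in $\KK^{\scriptscriptstyle \downarrow}$, the critical points and Hessian can be tracked via their leading terms, reducing each case to a short elimination in one variable.

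I would expect the only mild obstacle to be the three-point blowup, where $W$ depends on three independent Novikov parameters and the critical system is the most intricate. Even there, however, the non-degeneracy of the Hessian is generic and direct inspection of the leading-order Hessian determinant (which is a non-trivial Laurent monomial in the quantum parameters) shows that it cannot vanish. Combining the three explicit verifications with the two known cases gives semisimplicity of $QH_4(X,\omega)$ for every symplectic toric Fano $2$-fold.
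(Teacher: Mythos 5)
Your proposal is correct in outline and covers all five surfaces, but it takes a genuinely different and substantially heavier route than the machinery in this paper was built for. The paper's own strategy (spelled out implicitly and applied explicitly for $\mathbb{F}_1$ in Section~\ref{sec:non-unique}) is to verify Morseness of the superpotential \emph{only} at the normalized monotone form $\omega_0$ — a single, parameter-free computation per surface — and then invoke Theorem~\ref{thmin:sms} (equivalently Theorem~\ref{main-thm}(i)), which says that if $QH_{2d}(X,\omega_0)$ is semisimple then $QH_{2d}(X,\omega)$ is semisimple for \emph{every} toric $\omega$. You instead attempt a direct verification over the entire K\"ahler cone, which re-proves the hard content of Theorem~\ref{thmin:sms} by hand in each case.

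This choice is not merely inefficient; the step you gloss over is precisely where Theorem~\ref{thmin:sms} earns its keep. Your claim that ``the leading-order Hessian determinant is a non-trivial Laurent monomial in the quantum parameters'' is not accurate as stated: the Hessian determinant evaluated at a critical point is a Laurent \emph{polynomial} in the Novikov parameters and the critical coordinates, and its leading $s$-valuation depends on the chamber in which the exponents $(\alpha_1,\ldots,\alpha_k)$ lie. Non-degeneracy holds \emph{generically} (this is Theorem~\ref{thmin-generic}), but to conclude for all $\omega$ you must do a Newton-polygon / tropical case analysis across all chambers. Already for $\mathbb{F}_1$ this produces two regimes, $\alpha<3\beta$ and $\alpha\ge 3\beta$, in which the critical points have different valuation structures (the paper exhibits exactly this dichotomy in Section~\ref{sec:non-unique}, for a different purpose). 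For the two- and three-point blowups, with $3$- and $4$-dimensional K\"ahler cones respectively, the chamber decomposition is more intricate, and ``a short elimination in one variable'' understates the work. The conclusion survives, but only after this case analysis is actually carried out; as written, your argument contains a gap at that step. Using Theorem~\ref{thmin:sms} to reduce to the monotone point — where $W_0$ has all $b_\rho=1$ and the check is a finite, parameter-free Gr\"obner or discriminant computation — eliminates the gap entirely and is both the paper's intended route and the shorter one.

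Two minor points: the Batyrev/Jacobian-ring presentation you use is over the Novikov field $\KK^{\scriptscriptstyle\uparrow}$ for $QH^0$, not $\KK^{\scriptscriptstyle\downarrow}$; one then passes to $QH_4$ via the quantum Poincar\'e duality, as the paper does throughout. And you may as well treat $\mathbb{C}P^1\times\mathbb{C}P^1$ the same way as the other four — check the monotone superpotential $W_0=x+x^{-1}+y+y^{-1}$ is Morse and apply Theorem~\ref{thmin:sms} — rather than invoking K\"unneth; both work, but uniformity is cleaner.
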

%It is well known that up to isomorphisms, there are only finitely
%many symplectic toric Fano manifolds of fixed dimension.
In view of the above, Entov and Polterovich posed the following
question in~\cite{EP2}:

\noindent {\bf Question:} Is it true that the algebra
$QH_{2d}(X,\omega)$ is semisimple for any symplectic toric Fano
manifold $(X,\omega)$?

It is known (see e.g.~\cite{I} Corollary 5.12, and~\cite{FOOO}
Proposition 7.6) that semi-simplicity holds for generic toric
symplectic form. For the sake of completeness, we include this
statement below. More precisely:

\begin{thmin} \label{thmin-generic} Let $X$ be a smooth $2d$-dimensional toric Fano variety.
Then for a generic\footnote{The space of toric symplectic forms
has natural structure of a topological space, and generic here
means that $\omega$ belongs to a certain open dense subset in this
space.} choice of a toric symplectic form $\omega$ on $X$, the
quantum homology $QH_{2d}(X, \omega)$ is semisimple.
\end{thmin}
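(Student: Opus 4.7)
The plan is to use the Batyrev--Givental mirror description of $QH_{2d}(X,\omega)$ as the Jacobian algebra of a Laurent polynomial---the Landau--Ginzburg superpotential---and then to observe that semisimplicity is equivalent to Morseness of this superpotential, a condition that is generic in the parameters controlled by the symplectic class. Concretely, if $v_1,\dots,v_n\in\ZZ^d$ are the primitive ray generators of the fan of $X$, then the Batyrev quantum Stanley--Reisner presentation together with Givental's change of variables gives an isomorphism
\[
QH_{2d}(X,\omega)\;\cong\;\mathrm{Jac}(W_\omega)\;:=\;\CCC[x_1^{\pm1},\dots,x_d^{\pm1}]\big/\bigl(x_i\partial_{x_i}W_\omega\bigr)_{i=1,\ldots,d},
\]
with superpotential
\[
W_\omega(x)\;=\;\sum_{i=1}^n q_i(\omega)\,x^{v_i},
\]
where $q_i(\omega)\in\CCC^*$ depends algebraically on the symplectic class $[\omega]$ (essentially as an exponential of the $\omega$-areas of the toric divisor classes).

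Next I would reduce semisimplicity to a geometric condition on $W_\omega$. Since $W_\omega$ has only finitely many critical points in $(\CCC^*)^d$, the Jacobian algebra decomposes as a finite product of its localizations at these critical points, each isomorphic to the local Milnor algebra of the corresponding singularity. Such a local algebra reduces to $\CCC$ if and only if the Hessian of $W_\omega$ is non-degenerate at that critical point. Hence $QH_{2d}(X,\omega)$ is semisimple if and only if $W_\omega$ is a Morse function on $(\CCC^*)^d$; equivalently, if and only if the total number of critical points (without multiplicity) equals $\dim_\CCC QH_{2d}(X,\omega) = \chi(X)$.

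The third step is to show that the Morse condition is generic. The locus in the parameter space $(\CCC^*)^n$ for which $W_q$ fails to be Morse is cut out by the vanishing of a single Laurent polynomial $\Delta(q)$---the elimination resultant of the system $\{x_i\partial_{x_i}W=0,\;\det\mathrm{Hess}(W)=0\}$. It therefore suffices to exhibit a single $q^{(0)}$ for which $W_{q^{(0)}}$ is Morse. I would produce this via a large-volume (tropical) degeneration: fix a regular triangulation of the Newton polytope $\mathrm{Conv}(v_1,\dots,v_n)$ compatible with the fan of $X$ and scale the $q_i$'s so that their valuations realize this triangulation. A standard leading-order analysis then shows that in this adiabatic regime $W_\omega$ has exactly $\chi(X)$ non-degenerate critical points in $(\CCC^*)^d$, one near each maximal simplex, saturating the dimension bound. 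Hence $\Delta\not\equiv 0$, and pulling back via the algebraic parameter map $\omega \mapsto (q_i(\omega))$ yields a Zariski-open, and hence generic, subset of the space of toric symplectic forms on which $QH_{2d}(X,\omega)$ is semisimple.

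The main obstacle is this last step: producing an explicit Morse parameter requires a careful choice of regular triangulation and a leading-order Morse-theoretic count of the critical points in the adiabatic limit. A slicker, more algebraic alternative is to invoke the Gelfand--Kapranov--Zelevinsky theory of $A$-discriminants: the principal $A$-discriminant associated with the fixed support $\{v_1,\dots,v_n\}$ is a non-trivial polynomial in $q$, so its non-vanishing on a Zariski-dense subset of parameter space gives $\Delta\not\equiv 0$ directly, bypassing any asymptotic analysis. Once a single Morse parameter is exhibited, the openness of the Morse condition finishes the proof.
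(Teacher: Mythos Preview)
Your overall strategy---identify $QH_{2d}(X,\omega)$ with the Jacobian ring of the Landau--Ginzburg superpotential, reduce semisimplicity to the Morse condition on $W_\omega$, and then argue that Morseness is generic in the coefficients---is exactly the paper's approach. Two points of comparison are worth making.

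First, your proposed methods for exhibiting a single Morse parameter (tropical degeneration along a regular triangulation, or GKZ $A$-discriminants) are substantially heavier than what the paper does. The paper regards $W$ as a section of the anticanonical bundle on the \emph{dual} toric Fano variety $X^*$, observes that this bundle is ample and that the differentials of its global sections generate the cotangent space at every point of the big torus, and concludes immediately that a general section has only non-degenerate critical points. This Bertini-type argument avoids any asymptotic analysis or discriminant machinery, and moreover directly gives non-triviality of the bad locus polynomial $P$ over $\CCC$.

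Second, you should be more careful about the coefficient field. In the paper's setup the quantum homology is taken over the Novikov field $\KK$, and the superpotential has coefficients $s^{F(n_\rho)}\in\KK^*$, not complex numbers depending algebraically on $\omega$. The final step---showing that for generic $\omega$ the point $(s^{F(n_\rho)})_\rho$ lies outside the zero locus of $P$---is not just ``pull back the Zariski-open set'': the paper argues that by a small perturbation of the $F(n_\rho)$ one can arrange that all monomials of $P$ acquire distinct $s$-degrees, whence $P(s^{F(n_\rho)})\neq 0$ in $\KK$. Your sketch glosses over this transfer, which is short but not entirely automatic.
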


However, it turns out that the answer to the question of Entov and
Polterovich is negative. The first counter example exists in (real)
dimension eight.

{\begin{propin} \label{counter-ex} There exists a
monotone\footnote{Recall that $(X,\omega)$ is called monotone if
$c_1=\kappa [\omega]$, where $\kappa
>0$, and $c_1$ is the first Chern class of $X$.} % Moreover, any toric Fano manifold can be equipped with a monotone symplectic structure.}
symplectic toric Fano 4-fold $(X,\omega)$ whose quantum homology
algebra $QH_{8}(X,\omega)$ is not semisimple.
\end{propin}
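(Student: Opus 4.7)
The plan is to exhibit a concrete example, relying on Batyrev's combinatorial description of the quantum cohomology of toric Fano manifolds. Recall that for a smooth toric Fano manifold $X$ of complex dimension $d$ whose fan has primitive ray generators $v_1,\dots,v_n\in\ZZ^d$, the monotone specialization of $QH_{2d}(X,\omega)$ can be identified with the Jacobian ring
$$J(W)=\CCC[y_1^{\pm1},\dots,y_d^{\pm1}]\,\Big/\,\Bigl(y_j\tfrac{\partial W}{\partial y_j}\ :\ 1\le j\le d\Bigr),$$
where $W(y)=\sum_{i=1}^n y^{v_i}$ is the Hori--Vafa--Batyrev Landau--Ginzburg superpotential. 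Being Fano and monotone, $J(W)$ is a finite-dimensional $\CCC$-algebra of dimension $\dim H^*(X;\CCC)$, and it is semisimple precisely when all critical points of $W$ on $(\CCC^*)^d$ are non-degenerate (i.e.\ Morse).

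Hence it suffices to exhibit a smooth toric Fano 4-fold whose superpotential $W$ has a degenerate critical point. I would search the classified list of smooth toric Fano 4-folds (124 isomorphism classes, after Batyrev and Sato), focusing on examples with substantial symmetry in the fan---such as projective bundles over $\CCC P^2$, over $\CCC P^1\times\CCC P^1$, or their blow-ups---because symmetric fans produce a $W$ invariant under a finite group action, and this reduces the critical-system $\nabla W=0$ to a smaller number of variables or, in the best cases, to a single polynomial equation in one variable whose discriminant can be computed by hand.

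Once a candidate $X$ is chosen, the verification consists of writing down the ray generators and the associated $W$, solving $y_j\,\partial W/\partial y_j=0$, and exhibiting either a critical point with degenerate Hessian or, equivalently, a non-zero nilpotent element in $J(W)$. Monotonicity of the symplectic form is automatic once one takes $[\omega]=c_1(X)$. The main obstacle is the computation in four variables: the discriminant of the critical system of a generic Laurent polynomial in $(y_1,\dots,y_4)$ is a huge expression in the coefficients, and since the monotone normalization forces all coefficients to equal $1$, there is no room to tune them. The heart of the argument is therefore the combinatorial task of recognizing a fan whose structure forces this discriminant to vanish at the monotone value; contrasted with Theorem~\ref{thmin-generic} (which goes in the opposite direction), this shows the phenomenon is special rather than generic, and the natural strategy is either to exploit a large symmetry group of the fan or to perform a systematic search through Batyrev--Sato's list.
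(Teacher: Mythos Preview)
Your strategy is exactly the one the paper uses: identify $QH_{2d}(X,\omega_0)$ with the Jacobian ring of the Landau--Ginzburg superpotential $W=\sum_\rho x^{n_\rho}$ and exhibit a smooth toric Fano $4$-fold for which $W$ has a degenerate critical point. So the approach is correct and matches the paper.

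The gap is that your proposal stops at the search strategy and never produces the example, which is the entire content of the proof. The paper's example is Batyrev's variety $U_8$: the dual polytope $\Delta^*\subset\RR^4$ has vertices
\[
e_1,\ e_2,\ e_3,\ e_4,\ -e_1+e_4,\ -e_2+e_4,\ e_2-e_4,\ -e_2,\ -e_4,\ -e_3-e_4,
\]
giving
\[
W = x_1 + x_2 + x_3 + x_4 + \tfrac{1}{x_2} + \tfrac{1}{x_4} + \tfrac{1}{x_3 x_4} + \tfrac{x_4}{x_1} + \tfrac{x_4}{x_2} + \tfrac{x_2}{x_4}.
\]
One checks directly that $x_0=(-1,-1,-1,1)$ is a critical point, and the Hessian there has rank $3$, so $x_0$ is degenerate. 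Note that this example is \emph{not} of the highly symmetric projective-bundle type you were planning to scan first; it is found by going through the list rather than by a symmetry heuristic, so your proposed shortcut would likely not have located it.
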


Using K\"unneth formula we also produce examples of non-monotone
symplectic Fano manifolds $(X,\omega)$ with non semisimple
quantum cohomology algebras. In particular, there exists a
non-monotone Fano 5-fold $(X,\omega)$ with a non semisimple
$QH_{10}(X,\omega)$. Notice  that it would be interesting to
construct an example of non-decomposable non-monotone symplectic
Fano manifold with this property.
%, at the moment we do not have such examples.

We wish to remark that a toric Fano manifolds $X$ may be equipped
with a distinguished toric symplectic form $\omega_0$, namely the
normalized monotone symplectic form corresponding to $c_1(X)$. This is
the unique symplectic form for which the corresponding moment
polytope is reflexive (see Section~\ref{Prelim-section}).
 Our second result shows that as far as
semi-simplicity is concerned, the symplectic form $\omega_0$ is,
in a matter of speech, the worst. % More precisely:

\begin{thmin}\label{thmin:sms}
Let $X$ be a toric Fano manifold of (real) dimension $2d$, and let
$\omega$ be a toric symplectic form on $X$. If
$QH_{2d}(X,\omega_0)$ is semisimple then $QH_{2d}(X,\omega)$ is
semisimple.
\end{thmin}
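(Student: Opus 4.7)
The plan is to reduce semi-simplicity of $QH_{2d}(X,\omega)$ to a combinatorial non-degeneracy condition on an associated Hori-Vafa superpotential, and then show that this condition, once verified at the monotone form $\omega_0$, is automatically inherited by every other toric symplectic form.

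Let $X$ have fan generated by the primitive ray vectors $v_1,\dots,v_n\in\ZZ^d$, and let the moment polytope of $\omega$ be defined by $\langle v_i,x\rangle\ge-\lambda_i(\omega)$. By Batyrev's presentation one has
\[
QH_{2d}(X,\omega)\cong \KK^{\scriptscriptstyle \downarrow}[z_1^{\pm 1},\dots,z_d^{\pm 1}]\big/\bigl(z_j\partial_{z_j}W_\omega\bigr)_{j=1}^d,
\]
where $W_\omega(z)=\sum_{i=1}^n e^{-\lambda_i}z^{v_i}$ is the Hori--Vafa superpotential. Under this isomorphism, semi-simplicity of $QH_{2d}(X,\omega)$ becomes the condition that the critical scheme of $W_\omega$ in $(\CCC^*)^d$ be reduced, i.e., that $W_\omega$ be a Morse function on the algebraic torus.

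Next, I would invoke Kouchnirenko's theorem to reformulate Morseness combinatorially. Since $\dim QH_{2d}(X,\omega)=\chi(X)=d!\,\mathrm{Vol}(\Delta^\circ)$, with $\Delta^\circ=\ConH(v_1,\dots,v_n)$, the polynomial $W_\omega$ is Morse on $(\CCC^*)^d$ if and only if it is non-degenerate with respect to $\Delta^\circ$: for every face $F\subset\Delta^\circ$, the face polynomial $W_\omega^F=\sum_{v_i\in F}e^{-\lambda_i}z^{v_i}$ has no critical points on $(\CCC^*)^d$. Thus the theorem is reduced to the claim that $\Delta^\circ$-non-degeneracy of $W_{\omega_0}$ implies that of $W_\omega$ for any toric symplectic form $\omega$.

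The heart of the proof proceeds face by face. For each face $F$ of $\Delta^\circ$, non-degeneracy of $W_\omega^F$ depends only on the coefficient vector $(e^{-\lambda_i})_{v_i\in F}$ modulo the rescaling action $q_i\mapsto\alpha^{v_i}q_i$ of $(\CCC^*)^d$, since this action arises from substituting $z\mapsto\alpha z$ and so preserves the critical locus. For the monotone form this coefficient vector is the constant one $(e^{-1})_{v_i\in F}$, and the Fano hypothesis---equivalently, that the polytope $\Delta$ for $\omega_0$ is reflexive and $0$ is an interior lattice point of $\Delta^\circ$---guarantees that the rescaling torus acts with enough freedom on each face. The crucial step would then be to show that the coefficient vector $(e^{-\lambda_i(\omega)})_{v_i\in F}$ attached to a general toric symplectic form $\omega$ lies in the same rescaling orbit (up to an overall scalar) as the monotone one, whence $\Delta^\circ$-non-degeneracy transfers from $\omega_0$ to $\omega$.

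The main obstacle will be making this face-wise orbit comparison precise. The Kähler moduli space of $X$ has dimension $n-d$, while the rescaling torus has dimension $d$, so a direct orbit-theoretic identification on the full coefficient space is not automatic; it must exploit the specific combinatorics of faces of the reflexive polytope $\Delta^\circ$ together with the description of toric Kähler classes as restrictions of a linear map on $H^2(X,\RR)$. Carrying this combinatorial analysis through uniformly in each face of $\Delta^\circ$ is the technical heart of the argument.
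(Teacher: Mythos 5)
Your proposal has two fatal gaps, each by itself enough to sink the argument.

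\textbf{The Kouchnirenko reformulation is false.} You claim that $W_\omega$ being Morse on the torus is equivalent to $W_\omega$ being non-degenerate with respect to $\Delta^\circ$ in Kouchnirenko's sense (each proper face polynomial has no critical points on the torus). These are genuinely different conditions: the latter is a condition ``at infinity'' controlling where critical points may escape, while the former is a condition on the actual critical points in the torus. In the present toric Fano/Delzant setting the Kouchnirenko-type non-degeneracy is in effect automatic --- this is exactly what Lemma~\ref{lemma:jscheme} in the paper verifies, showing the critical scheme $Z_W$ avoids the boundary divisor for \emph{any} choice of nonzero coefficients, so that $\deg Z_W = d!\,{\rm Vol}(\Delta^*) = |\Sigma^d|$ always holds. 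But the counterexample of Proposition~\ref{counter-ex} exhibits a superpotential $W_0$ which does satisfy this, yet has a degenerate critical point at $(-1,-1,-1,1)$. So Kouchnirenko non-degeneracy does not imply Morseness, and your reduction collapses.

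\textbf{The orbit comparison cannot work.} Even granting the reduction, the proposed ``crucial step'' --- showing $(s^{F(n_\rho)})_\rho$ lies in the same rescaling orbit (up to scalar) as the constant vector --- is not merely technically difficult, it is impossible whenever $n > d+1$, for the dimension reason you yourself flag. Worse, if the full coefficient vector (not just its restrictions to proper faces) were in the orbit of the constant vector, then $W_\omega$ and $W_{\omega_0}$ would be related by a torus automorphism, hence their Jacobian rings would be isomorphic and semi-simplicity would be \emph{independent} of $\omega$. This contradicts the combination of Theorem~\ref{thmin-generic} (generic $\omega$ gives semisimple quantum cohomology) and Proposition~\ref{counter-ex} ($\omega_0$ can fail to give semisimple quantum cohomology) for one and the same manifold $X$.

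For comparison, the paper's mechanism is entirely different and exploits precisely the fact that the rings for $\omega$ and $\omega_0$ are \emph{not} isomorphic: it builds a one-parameter flat finite family over $\Spec\CCC[s^{\pm 1}]$ from the superpotential, observes that the families for $F$ and $F_0$ have the \emph{same} fiber over $s=1$ (where both specialize to $\sum_\rho x^{n_\rho}$), notes that the family for $F_0$ is constant (so semisimplicity of $QH^0(X,\omega_0)$ is equivalent to reducedness of that closed fiber), and then invokes the semicontinuity-type Lemma~\ref{lemma:genred}: for a flat finite scheme over a DVR, reducedness of the closed fiber propagates to the generic fiber. So the correct slogan is not ``the two superpotentials are equivalent'' but ``the singular superpotential is a degeneration limit of all the others, and singularities only accumulate under degeneration.''
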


Inspired by McDuff's observation we modify the above question of
Entov and Polterovich and ask the following:

\noindent {\bf Question:} %\label{ques:EPnew}
Is it true that the algebra $QH_{2d}(X,\omega)$ contains a field
as a direct summand for any symplectic toric Fano manifold
$(X,\omega)$?

Currently we do not have an example of a symplectic toric Fano
manifold $(X,\omega)$ that does not satisfy this property.
Moreover, it seems that no such example exists in low dimensions.
We hope to return to this question in the near future. Meanwhile,
we prove the following analog of Theorem \ref{thmin:sms}:

\begin{thmin}\label{thmin:dirsmd} Let $X$ be a toric Fano manifold of (real) dimension $2d$, and let
$\omega$ be a toric symplectic form on $X$. If
$QH_{2d}(X,\omega_0)$ contains a field as a direct summand, then
$QH_{2d}(X,\omega)$ contains a field as a direct summand.
\end{thmin}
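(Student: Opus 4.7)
The plan is to parallel the proof of Theorem~\ref{thmin:sms}, localizing its argument to a single direct factor rather than running it simultaneously on all factors. Via the Batyrev--Givental presentation, identify
\[
QH_{2d}(X,\omega) \;\cong\; \KK^{\downarrow}[z_1^{\pm1},\ldots,z_d^{\pm1}]\big/\bigl(\partial_1 W_\omega,\ldots,\partial_d W_\omega\bigr),
\]
where $W_\omega=\sum_{i=1}^n q_i(\omega)\, z^{v_i}$ is the Landau--Ginzburg superpotential, the $v_i\in\ZZ^d$ are the primitive generators of the rays of the fan of $X$, and the coefficients $q_i(\omega)\in(\KK^{\downarrow})^*$ are the Novikov monomials prescribed by the moment polytope of $\omega$. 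The spectrum of $QH_{2d}(X,\omega)$ is then the critical scheme of $W_\omega$ in the algebraic torus $(\CCC^*)^d$, which decomposes as a finite union of local Artinian pieces; a piece is a field precisely when the corresponding critical point is non-degenerate. Thus the theorem reduces to showing that if $W_{\omega_0}$ admits a non-degenerate critical point $p_0$ then so does $W_\omega$.

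Next, connect $\omega_0$ and $\omega$ by a real-analytic path $(\omega_t)_{t\in[0,1]}$ in the K\"ahler cone and apply the implicit function theorem to continue $p_0$ to an analytic family $p_t$ of non-degenerate critical points of $W_{\omega_t}$ on a maximal interval $[0,t_\star)$. The theorem follows once $t_\star=1$. Two obstructions to extending $p_t$ need to be ruled out: the family may either escape to the boundary of $(\CCC^*)^d$, or collide with another critical point, at which point the Hessian degenerates. Escape to the boundary is excluded by the properness of the critical locus over the K\"ahler cone, since the Newton polytope of $W_\omega$ depends only on the fan of $X$ and not on $\omega$, forcing all critical points to remain in a fixed compact region of the torus.

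The principal obstacle is the collision scenario, where the ``worst-case'' role of $\omega_0$ becomes essential. Adapting the core argument of Theorem~\ref{thmin:sms}, the reflexivity of the moment polytope of $\omega_0$ places $\omega_0$ at the most symmetric point of the K\"ahler cone, so that any collision of two critical points at some $\omega_{t_\star}$ can, by reversibility of the deformation, be traced back to a simultaneous collision of their analytic continuations at $\omega_0$; this would force $p_0$ itself to be degenerate, contradicting the hypothesis. Since we need only preserve the single critical point $p_t$, rather than all critical points simultaneously, the combinatorial and tropical input underlying Theorem~\ref{thmin:sms} can be applied pointwise; making this localization precise is the main technical step requiring work. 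Once $t_\star=1$ is established, $p_1$ is a non-degenerate critical point of $W_\omega$, and the associated local factor of $QH_{2d}(X,\omega)$ is a field, giving the required direct summand.
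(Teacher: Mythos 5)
Your high-level strategy (track a non-degenerate critical point of $W_{\omega_0}$ as $\omega_0$ deforms to $\omega$) is genuinely different from the paper's, and unfortunately the key step does not hold up.

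The paper does \emph{not} argue by continuation along a path of symplectic forms. Instead it observes that the superpotentials $W=W_{F,\Sigma}$ and $W_0=W_{F_0,\Sigma}$ agree after setting the Novikov parameter $s=1$, and that $W_0 = s^{-1}\sum_{\rho}x^{n_\rho}$ depends on $s$ only through an invertible scalar. Hence, writing $Y=\Spec\CCC[s^{\pm1}][N]/J_W$ and $Y_0=\Spec\CCC[s^{\pm1}][N]/J_{W_0}$, the family $Y_0\to\CCC^*$ is \emph{constant} with fiber $Y_c=Y|_{s=1}=Y_0|_{s=1}$, so the hypothesis on $QH_{2d}(X,\omega_0)$ is exactly the statement that $Y_c$ (a finite $\CCC$-scheme) contains a reduced point. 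The paper then proves that $Y\to\Spec R$ ($R$ the localization of $\CCC[s^{\pm1}]$ at $s=1$) is flat and finite (Claim~\ref{claim:flfin}), and that for a flat finite family, a reduced point in the special fiber produces a reduced point in the generic fiber (Lemma~\ref{lemma:genred}). This semicontinuity-type lemma is the mechanism by which ``$\omega_0$ is the worst case'' is made precise, and it is entirely missing from your argument.

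The concrete gap is your collision step. You assert that if the continued critical point $p_t$ collides with another at some $t_\star$, then ``by reversibility of the deformation'' one can trace this back to a collision at $\omega_0$, contradicting non-degeneracy of $p_0$. This inference is unjustified: a collision at $t_\star$ says nothing about what the other branch does at $t=0$, and a non-degenerate $p_0$ is isolated, so the two branches certainly do not collide at $t=0$. The ``reflexivity makes $\omega_0$ the most symmetric point'' remark is not a proof, and this is precisely where the actual content of the theorem lives. Moreover, the whole continuation is run over the non-Archimedean Novikov field $\KK$, where ``real-analytic path,'' ``implicit function theorem,'' and ``compact region of the torus'' require careful reinterpretation that you do not supply; and properness of the relative critical locus over the Kähler cone is asserted rather than proved. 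The intuition that critical points only separate (and never collide) as one moves away from $\omega_0$ is tropically correct, but it is realized in the paper through the flat degeneration to the $s=1$ fiber, not through a path-continuation argument.
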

In Subsection~\ref{sec:LGP}  we show that the property of
$QH_{2d}(X,\omega)$ of having a field as a direct summand is
equivalent to the existence of a non-degenerate critical point of a
certain (combinatorially defined) function $W_X$, called the
Landau-Ginzburg superpotential, assigned naturally to $(X, \omega)$.
McDuff's observation and  Theorem~\ref{thmin:dirsmd} reduce the
question of the existence of  Calabi quasimorphisms and symplectic
quasi-states on a symplectic toric manifold $(X,\omega)$ to the
normalized monotone case $(X, \omega_0)$, and hence to the problem
of analyzing the critical points of a function $W_X$, depending only
on $X$ and not on the symplectic form. This can be done easily in
many cases. In particular we construct the following new examples of
symplectic manifolds admitting Calabi quasimorphisms and symplectic
quasi-states:

\begin{corolin} \label{cor:examlpes}
Let $X$ be one of the following manifolds: (i) a symplectic toric
Fano 3-fold, (ii) a symplectic toric Fano 4-fold, (iii) the
symplectic blow up of ${\mathbb C}P^d$ at $d+1$ general points.
Then $X$ admits a Calabi quasimorphism and a symplectic
quasi-state.
\end{corolin}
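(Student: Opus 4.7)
The paragraph preceding the corollary already outlines the strategy: combining Theorem~\ref{thmin:dirsmd} with McDuff's strengthening of the Entov--Polterovich--Ostrover theorem, and with the equivalence from Subsection~\ref{sec:LGP}, the assertion for each $X$ in the three families reduces to exhibiting a non-degenerate critical point of the Landau--Ginzburg superpotential
\[
W_X(z_1,\dots,z_d)\;=\;\sum_{v} z^v,
\]
where the sum runs over the primitive generators $v\in\ZZ^d$ of the rays of the fan of $X$. Every smooth toric Fano variety is rational and strongly semi-positive (the latter because $-K_X$ is ample, so every holomorphic sphere has positive first Chern number), so the hypotheses of the refined Entov--Polterovich--Ostrover theorem are in force as soon as the algebraic condition is verified.

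For part (iii) I would exploit the $S_d$-symmetry of the configuration. The fan of the blow-up of $\PP^d$ at its $d+1$ torus-fixed points has $2d+2$ primitive rays, namely $\pm e_i$ for $i=1,\dots,d$ together with $\pm(e_1+\cdots+e_d)$, so
\[
W_X(z)\;=\;\sum_{i=1}^d\left(z_i+\frac{1}{z_i}\right)+z_1\cdots z_d+\frac{1}{z_1\cdots z_d}.
\]
Setting $z_1=\cdots=z_d=t$ collapses the critical equation to $1-t^{-2}+t^{d-1}-t^{-d-1}=0$, which has the obvious solution $t=1$. A direct computation shows that at the point $(1,\dots,1)$ the Hessian has all diagonal entries equal to $4$ and all off-diagonal entries equal to $2$, hence eigenvalues $2d+2$ and $2$ (the latter with multiplicity $d-1$), and so is non-singular.

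For parts (i) and (ii) I would invoke the complete classifications of smooth toric Fano varieties in dimensions three and four ($18$ classes, due to Batyrev and Watanabe--Watanabe; $124$ classes, due to Batyrev and Sato, with the enumeration completed by Kreuzer--Nill and {\O}bro). In each case the primitive ray generators of the fan are explicitly tabulated, so $W_X$ can be written down and the critical system $\partial W_X/\partial z_j=0$ analyzed --- in closed form when combinatorial symmetries of the polytope allow it, numerically otherwise --- verifying non-degeneracy by evaluating $\det\mathrm{Hess}\,W_X$ at the candidate critical point.

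The main obstacle is the bulk of the four-dimensional case: $124$ separate computations, each conceptually trivial but together requiring careful bookkeeping. A uniform structural argument --- for instance a proof that for every smooth reflexive polytope of sufficiently small dimension the associated $W_X$ has a Morse critical point, perhaps via a Kouchnirenko-type non-degeneracy criterion on its Newton polytope --- would be more satisfying, and would also bear on the modified question raised just before the corollary.
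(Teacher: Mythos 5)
Your proposal reproduces the paper's strategy faithfully: the reduction to finding a non-degenerate critical point of the superpotential for the normalized monotone form via Theorem~\ref{thmin:dirsmd}, Corollary~\ref{cor:jschemeGen}(ii), and McDuff's strengthening of the Entov--Polterovich theorem, and then a direct check. The only substantive difference is cosmetic: for the blow-up of $\PP^d$ at $d+1$ points you verify non-degeneracy at $(1,\dots,1)$, computing the Hessian $2I+2J$ with spectrum $\{2d+2,\,2^{(d-1)}\}$, whereas the paper exhibits the critical point $(-1,\dots,-1)$; both are valid and give the same conclusion (and since the log-Hessian equals $DHD$ with $D=\pm I$ at these points, the ordinary-Hessian computation is exactly what is needed). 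For parts (i) and (ii) you, like the paper, appeal to the finite classifications of smooth toric Fano 3- and 4-folds and a case-by-case verification; the paper likewise only sketches this step and defers to computer assistance. In short, this is the paper's own argument with an alternative (and slightly more explicit) critical point in part (iii).
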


Another byproduct of our method is the following two propositions.
The first one, inspired by McDuff~\cite{Mc1}, answers a question
raised by Entov and Polterovich~\cite{EP1} regarding the uniqueness
of the Calabi quasimorphism. We will briefly recall the definition
of a Calabi quasimorphism in Section~\ref{sec:non-unique}. For a
detailed discussion see~\cite{EP1},~\cite{EP4}.

\begin{corolin} \label{cor:quasi-non-unique}
Let $(X,\omega)$ be the blow up of $\CCC P^2$ at one point equipped
with a symplectic form $\omega$. If
$\omega(L)/\omega(E)<3$, where $L$ is the class of a line on $\CCC P^2$, and $E$ is the class of the exceptional divisor, then
there are %exist
two different Calabi quasimorphisms on
$(X,\omega)$.
\end{corolin}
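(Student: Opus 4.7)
The plan is to apply the Landau--Ginzburg correspondence of Subsection~\ref{sec:LGP}, which identifies $QH_{2d}(X,\omega)$ with the Jacobi ring of the toric superpotential $W_X$, together with the McDuff argument~\cite{Mc1} that two distinct idempotents coming from two field direct summands in $QH_{2d}$ yield two distinct Calabi quasimorphisms. It therefore suffices to exhibit at least two distinct field summands in $QH_4(X,\omega)$ when $a/b<3$, where $a=\omega(L)$ and $b=\omega(E)$.

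For $X$ the one-point blow up of $\CCC P^2$, I would use the toric fan with rays $v_1=(1,0)$, $v_2=(0,1)$, $v_3=(-1,1)$, $v_4=(0,-1)$ and the symmetric translation of the moment polytope giving support values $\lambda_1=\lambda_3=(a+b)/4$, $\lambda_2=\lambda_4=(a-b)/2$. Then the superpotential is
\[
W \;=\; e^{-(a+b)/4}\bigl(x+y/x\bigr) + e^{-(a-b)/2}\bigl(y+1/y\bigr).
\]
The equation $\partial_x W=0$ forces $y=x^2$, and substituting into $\partial_y W=0$ reduces the critical locus to
\[
P(x) \;=\; x^4 + w\,x^3 - 1 \;=\; 0,\qquad w := e^{(a-3b)/4},
\]
so that $QH_4(X,\omega)\cong \KK^{\scriptscriptstyle \downarrow}[x]/(P(x))$.

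Under the hypothesis $a<3b$, the Novikov valuation of $w$ is $(3b-a)/4>0$, so the residue of $P$ modulo elements of strictly positive valuation is the polynomial $x^4-1$. Since $\pm 1$ are distinct simple roots of $x^4-1$ in the residue field, Hensel's lemma lifts them to distinct roots $x_+, x_- \in \KK^{\scriptscriptstyle \downarrow}$ of $P$. These produce two distinct linear factors of $P$ over $\KK^{\scriptscriptstyle \downarrow}$, and consequently two distinct idempotents $e_+, e_- \in QH_4(X,\omega)$, each generating a copy of $\KK^{\scriptscriptstyle \downarrow}$ as a direct summand.

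The main obstacle is the last step --- passing from two idempotents to two genuinely distinct Calabi quasimorphisms as functions on $\mathrm{Ham}(X,\omega)$. For this one invokes the McDuff spectral-invariant argument of~\cite{Mc1}: the partial quasimorphisms built from $e_+$ and $e_-$ via the Entov--Polterovich construction are separated on a suitably chosen test Hamiltonian, from which one concludes $\mu_{e_+}\neq \mu_{e_-}$.
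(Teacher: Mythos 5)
The Landau--Ginzburg set-up is sound and, after simplifying the critical equations, you do arrive at the right critical polynomial $P(x)=x^4+s^{(a-3b)/4}x^3-1$; but the proof has a genuine gap in the final step, and a sign error earlier that hides the mechanism the final step actually requires. Producing two field direct summands is not where the hypothesis $\omega(L)/\omega(E)<3$ is needed: $QH_4$ of the one-point blow-up of $\CCC P^2$ is semisimple for \emph{every} toric form (this follows from Theorem~\ref{thmin:sms} because it holds in the monotone case), so you always have four idempotents $e_p$, each generating a field, with or without the numerical hypothesis. The entire content of Corollary~\ref{cor:quasi-non-unique} is therefore in showing that two of the resulting quasimorphisms actually \emph{disagree} on $\widetilde{Ham}(X)$, and this is exactly where $\omega(L)/\omega(E)<3$ must enter; your closing sentence, which appeals to an unspecified ``McDuff spectral-invariant argument,'' is a placeholder rather than a proof. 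The paper's mechanism is concrete: restrict ${\cal Q}_e$ to loops $\gamma\in\pi_1(Ham(X))$, where ${\cal Q}_{e}(\gamma)=\log\|e*{\cal S}(\gamma)\|_{\rm sp}$; by McDuff--Tolman the Seidel elements of the toric circle actions correspond to $x^n$ under the Jacobi-ring identification; and Corollary~\ref{cor:aboutval}(iii) gives $\|e_p\,x^n\|_{\rm sp}=|x^n(p)|$. Thus $e_p$ and $e_{p'}$ produce different quasimorphisms as soon as $|x^n(p)|\ne|x^n(p')|$ for some $n\in N$, i.e.\ as soon as two critical points of $W$ have coordinates of different non-Archimedean norm.

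Your valuation analysis gets the sign wrong and so obscures precisely this. Over $\KK=\KK^{\scriptscriptstyle\uparrow}$, where the superpotential and the Jacobi ring are defined, $|s^\lambda|=10^{-\lambda}$, so $s^\lambda$ is small when $\lambda>0$. With $a<3b$ the exponent $(a-3b)/4$ is \emph{negative}, so $w=s^{(a-3b)/4}$ is a \emph{large} element of $\KK$; $P$ is not integral and cannot be reduced modulo the maximal ideal, so ``$P\equiv x^4-1$'' is not correct. (If you prefer to work with $QH_4$ over $\KK^{\scriptscriptstyle\downarrow}$, the quantum Poincar\'e duality replaces $s$ by $s^{-1}$ in the coefficients, giving $x^4+s^{(3b-a)/4}x^3-1$; its middle coefficient is again large in the natural valuation on $\KK^{\scriptscriptstyle\downarrow}$.) The correct Newton-polygon computation is the one the paper does: precisely when $a<3b$ the polygon has two faces, so the four roots split into three of one valuation and one of another. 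Had your reduction been right — four roots all of valuation zero reducing to distinct fourth roots of unity — then $|x^n(p)|$ would be the same for all critical points, the criterion above would fail, and the numerical hypothesis would play no role whatsoever. The sign error therefore masks the very feature of $Z_W$ that makes the statement true.
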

\noindent {\bf Remark:} Other examples of symplectic manifolds for
which the Calabi quasimorphism is non-unique were constructed by
Entov, McDuff, and Polterovich in~\cite{BEP}. We chose to include
the above example here due to the simplicity of the argument.
Moreover, we remark that Corollary~\ref{cor:quasi-non-unique} can be
easily extended to other toric Fano manifolds.

Finally, we finish this section with a folklore result, known to
experts in the field and proven in full detail by Auroux (see
Theorem $6.1$ in~\cite{Au}). We wish to remark that the results
in~\cite{Au} are more general (see Proposition $6.8$ in~\cite{Au}),
%We remark that Proposition $6.8$ in~\cite{Au} is in fact more
%general than the result below,
%the arguments in~\cite{Au} is in fact more general than the result
%below (see Proposition $6.8$ in~\cite{Au})
and do not rely on Batyrev's description of the quantum homology
algebra. %(Proposition~$(\ref{prop:LGmodel})$ above).
However, since by using
%Batyrev's description
Proposition~$(\ref{prop:LGmodel})$ the proof of the claim below
becomes much simpler, we felt it might be useful to include it here
as well.
\begin{corolin} \label{cor:mult-by-c1}
For a smooth toric Fano manifold $X$, the critical values of the
superpotential $W_X$ are the eigenvalues of the linear operator $QH^0(X,\omega)
\rightarrow QH^0(X,\omega)$ given by multiplication by $q^{-1}c_1(X)$.
\end{corolin}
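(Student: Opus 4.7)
The plan is to reduce the claim to a general fact about Jacobian rings of functions with isolated critical points, via the Landau--Ginzburg description of the quantum cohomology provided by Proposition~\ref{prop:LGmodel}. That proposition identifies $QH^0(X,\omega)$, as a commutative algebra, with the Jacobian ring of $W_X$, i.e.\ the quotient of the Laurent polynomial ring by the ideal generated by the logarithmic partial derivatives of $W_X$. Moreover, under this isomorphism the element $q^{-1}c_1(X)\in QH^0(X,\omega)$ is sent to the class of $W_X$ itself: this is essentially built into the construction, because $c_1(X)$ is represented by the sum of the toric divisors, while $W_X$ is by definition the sum of the monomials attached to the primitive ray generators of the fan, weighted by $q$ in precisely the way that matches this correspondence.

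Given this, the corollary becomes the following purely algebraic statement: for a Laurent polynomial $W$ with only isolated critical points, the eigenvalues of the multiplication operator by $W$ on the Jacobian ring $\mathrm{Jac}(W)$ are exactly the critical values of $W$. Since $X$ is Fano, $\mathrm{Jac}(W_X)$ is a finite-dimensional (hence Artinian) commutative algebra, so it splits as a direct sum of local Artinian algebras indexed by the critical points of $W_X$:
$$
\mathrm{Jac}(W_X) \;\cong\; \bigoplus_{p\in\mathrm{Crit}(W_X)} A_p .
$$
On each local summand $A_p$ the function $W_X - W_X(p)$ lies in the maximal ideal and is therefore nilpotent; consequently, multiplication by $W_X$ on $A_p$ is the sum of the scalar operator $W_X(p)\cdot\mathrm{Id}$ and a nilpotent operator, and its unique eigenvalue is $W_X(p)$.

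Combining these observations, the eigenvalues of multiplication by $q^{-1}c_1(X)$ on $QH^0(X,\omega)$ are precisely the values $\{W_X(p)\mid p\in\mathrm{Crit}(W_X)\}$, which is the desired conclusion. The only non-formal ingredient is the first paragraph, namely the identification of $q^{-1}c_1(X)$ with the class of $W_X$ under the Batyrev-type presentation; this is the one place where the toric geometry of $X$ (specifically the fact that $c_1(X)$ equals the sum of toric divisor classes) enters, and is exactly the content that Proposition~\ref{prop:LGmodel} is designed to supply. Once that identification is in hand, the remainder of the argument is a standard application of the Artinian decomposition of the Jacobian ring.
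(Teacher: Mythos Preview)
Your proof is correct and follows essentially the same approach as the paper: identify $q^{-1}c_1(X)$ with the class of $W$ via the explicit isomorphism $z_\rho\mapsto qs^{F(n_\rho)}x^{n_\rho}$ from the proof of Proposition~\ref{prop:LGmodel}, then use the Artinian decomposition of the Jacobian ring to read off the eigenvalues (the paper packages this last step as Corollary~\ref{cor:aboutval}). One small sharpening: the identification $q^{-1}c_1\leftrightarrow W$ is not quite stated in Proposition~\ref{prop:LGmodel} itself but follows from the explicit map~$(\ref{from-Batyrev-to-LG})$ constructed in its proof, so you should cite that map rather than the proposition alone.
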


\noindent {\bf Structure of the paper:} In
Section~\ref{Prelim-section} we recall some basic definitions and
notations regarding symplectic toric manifolds. In
Section~\ref{subsec-qh} we give three equivalent description of the
quantum cohomology of toric Fano manifolds. In
section~\ref{prf-main-results} we prove our main results. For
technical reasons it is more convenient for us to use quantum
cohomology instead of homology. In this setting
Theorem~\ref{thmin-generic} becomes Theorem~\ref{thm-generic} and
Theorems~\ref{thmin:sms} and~\ref{thmin:dirsmd} are combined
together to Theorem~\ref{main-thm}. In Section~\ref{sec:examples} we
prove Proposition~\ref{counter-ex} and Corollary~\ref{cor:examlpes}.
In Sections~\ref{sec:non-unique} and~\ref{sec:mult-by-c1} we prove
Corollaries~\ref{cor:quasi-non-unique} and~\ref{cor:mult-by-c1}
respectively. Finally, in the Appendix we give a short review on
toric varieties.

\noindent{\bf Acknowledgement:} We thank D. Auroux, L. Polterovich,
P. Seidel, and M. Temkin for helpful comments and discussions.

\section{Preliminaries, notation, and conventions.} \label{Prelim-section}

In this section we recall some algebraic definitions %introduce notations and conventions,
and collect all the facts we need regarding  symplectic toric
manifolds.

\subsection{Algebraic preliminaries}

\begin{convention*}
All the rings and algebras in this paper are %assumed to be
commutative with unit element.
\end{convention*}

\subsubsection{Semigroup algebras.}
Let $G$ be a commutative semigroup and let $R$ be a ring. %commutative ring with one. Then one can consider
The semigroup algebra $R[G]$ is the $R$-algebra consisting of
finite sums of formal monomials $x^g$, $g \in G$, with
coefficients in $R$, and equipped with the natural algebra
operations. For example, if $G=\ZZ^d$ then $R[G]$ is the algebra
of Laurent polynomials $R[x_1^{\pm 1},\dotsc ,x_d^{\pm 1}]$, and if
$G=\ZZ_+^d$ then $R[G]$ is the polynomial algebra
$R[x_1,\dotsc ,x_d]$. In this paper $R$ will usually be either the
field $\KK$ or the Novikov ring $\Lambda$ which are introduced at
the end of Subsection~\ref{qh-sym-def}.

\subsubsection{Semisimple algebras.} Among the many equivalent definitions of semisimplicity  we
consider the following:
\begin{definition} \label{def-semisimple}
Let $\FF$ be a field. A finite dimensional $\FF$-algebra $A$ is
called {\it semisimple} if  it contains no nilpotent elements.
\end{definition}
In the language of algebraic geometry (see e.g.~\cite{EH}),
semisimplicity is equivalent to the affine scheme $\Spec A$ being
reduced and finite over $\Spec\, \FF$, and in particular
zero-dimensional. Notice that a Noetherian zero-dimensional scheme is reduced
if and only if it is regular. If in addition $char\,\FF=0$ this is
equivalent to $\Spec A$ being geometrically regular (i.e., $\Spec
A\otimes_\FF\overline{\FF}$ is smooth). It follows from this
geometric description that a finite dimensional algebra $A$ is
semisimple if and only if it is a direct sum of field extensions
of $\FF$. Moreover, if $char\,\FF=0$ then $A$ is semisimple if
and only if $A\otimes_\FF\LL$ is semisimple for {\it any} field
extension $\LL/\FF$.

We say that $\FF$-algebra $A$ {\it contains a field as a direct summand} if it
decomposes {\it as a $\FF$-algebra} into a direct sum $A=\LL \oplus A'$,
where $\LL/\FF$ is a field extension.
Again, in geometric terms this condition means that the affine
scheme $\Spec A$ contains a regular point as an irreducible
component.

\subsubsection{Non-Archimedean seminorms.}\label{subsec:nonArch}
Let $\FF$ be a field. A {\it non-Archimedean norm} is a function
$|\cdot|\colon\FF\to \RR_+$ satisfying the following properties:
$|\lambda\mu|=|\lambda||\mu|$, $|\lambda+\mu|\le
\max\{|\lambda|,|\mu|\}$, and $|\lambda|=0$ if and only if $\lambda=0$.
Notice that the norm $|\cdot|$ defines a metric on $\FF$. A field
$\FF$ is called {\it non-Archimedean} if it is equipped with a
non-Archimedean norm such that $\FF$ is complete (as a metric
space). One can define the corresponding {\it non-Archimedean
valuation $\nu \colon \FF\rightarrow \RR \cup \{-\infty\}$} on $\FF$ by
setting\footnote{Usually one defines $\nu(\lambda):=-\log |\lambda|$
and $\nu(0)=\infty$, however, we chose the above %(non-standard)
normalization to make it compatible with~\cite{EP3} and~\cite{MT}}
$\nu(\lambda):=\log |\lambda|$. It satisfies similar properties,
i.e. $\nu(\lambda\mu)=\nu(\lambda)+\nu(\mu)$, $\nu(\lambda+\mu)\le
\max\{\nu(\lambda),\nu(\mu)\}$, and $\nu(\lambda)=-\infty$ if and
only if $\lambda=0$.
%Vice-versa, the non-Archimedean valuation defines the non-Archimedean norm.

Let $\FF$ be a non-Archimedean field, and let $A$ be an $\FF$-algebra. A {\it non-Archimedean seminorm} on
$A$ is a function $\|\cdot\|\colon A\to \RR_+$ such that $\|fg\|\le\|f\|\|g\|$, $\|f+g\|\le \max\{\|f\|,\|g\|\}$,
and $\|\lambda f\|=|\lambda|\|f\|$ for all $\lambda\in\FF$, $f,g\in A$. A seminorm is called {\it norm} if
the following holds: $\|f\|=0$ if and only if $f=0$. It is well known that if $\|\cdot\|$ is a non-Archimedean
seminorm and $\|f\|\ne\|g\|$ then $\|f+g\|=\max\{\|f\|,\|g\|\}$.
Given a non-Archimedean seminorm $\|\cdot\|$ one can consider the associated {\it spectral seminorm}
$\|\cdot\|_{\rm sp}$ defined by $\|f\|_{\rm sp}=\lim_{k\to\infty}\sqrt[k]{\|f^k\|}$.
It is easy to check that $\|\cdot\|_{\rm sp}$ is a non-Archimedean seminorm on $A$ satisfying $\|f^k\|_{\rm sp}=\|f\|_{\rm sp}^k$ for all $k$. Notice however, that $\|\cdot\|_{\rm sp}$ need not be a norm even if $\|\cdot\|$ is.

\begin{lemma} \label{lem-about-non-arc-norms}
Let $(\FF,|\cdot|)$ be a non-Archimedean algebraically closed field,
and let $A$ be a finite $\FF$-algebra equipped with a
non-Archimedean norm $\|\cdot\|$. Let $B\subseteq A$ be a local
$\FF$-subalgebra, $\fm$ its maximal ideal, and $e_B\in B$ its unit
element. Then $B=\FF e_B\oplus\fm$ as $\FF$-modules, and $\|\lambda
e_B+g\|_{\rm sp}=|\lambda|$ for all $\lambda\in\FF$ and $g\in\fm$.
\end{lemma}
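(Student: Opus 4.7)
My plan is to establish the module decomposition $B=\FF e_B\oplus\fm$ using the standard structure theory of finite local algebras over an algebraically closed field, and then deduce the spectral seminorm formula by a direct computation that exploits the nilpotency of $\fm$ together with the ultrametric inequality.

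For the decomposition: since $A$ is a finite $\FF$-algebra and $B\subseteq A$, the subalgebra $B$ is itself finite-dimensional over $\FF$. As $B$ is local, the residue field $B/\fm$ is a finite field extension of $\FF$, and because $\FF$ is algebraically closed this forces $B/\fm=\FF$. The composition of the structure map $\FF\to B$, $\lambda\mapsto\lambda e_B$, with the projection $B\to B/\fm$ is therefore an $\FF$-algebra isomorphism, giving the splitting $B=\FF e_B\oplus\fm$ as $\FF$-modules.

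For the spectral seminorm formula, I first note that $B$ is Artinian, so $\fm$ is nilpotent; fix $n$ with $\fm^n=0$. I next verify that $\|e_B\|_{\rm sp}=1$: the relation $e_B=e_B^2$ and submultiplicativity yield $\|e_B\|\ge 1$, while $\|e_B^k\|=\|e_B\|$ for all $k$ gives $\|e_B\|_{\rm sp}=\lim_k\|e_B\|^{1/k}=1$. Now let $g\in\fm$. Using the binomial expansion $(e_B+g)^k=\sum_{i=0}^{n-1}\binom{k}{i}g^i$ (higher terms vanish), the bound $|\binom{k}{i}|\le 1$ that holds for any integer in a non-Archimedean field, and the ultrametric inequality, I obtain $\|(e_B+g)^k\|\le\max_{0\le i<n}\|g^i\|$, a constant independent of $k$. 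Hence $\|e_B+g\|_{\rm sp}\le 1$. The same estimate applied to the truncated geometric series $(e_B+g)^{-1}=\sum_{i=0}^{n-1}(-g)^i$, which lies in $B$ because $e_B+g$ is a unit in the local ring $B$, shows $\|(e_B+g)^{-1}\|_{\rm sp}\le 1$. Submultiplicativity of $\|\cdot\|_{\rm sp}$ then forces $1=\|e_B\|_{\rm sp}\le\|e_B+g\|_{\rm sp}\cdot\|(e_B+g)^{-1}\|_{\rm sp}\le\|e_B+g\|_{\rm sp}$, so $\|e_B+g\|_{\rm sp}=1$.

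The general case follows by scaling. If $\lambda=0$, nilpotency $g^n=0$ immediately gives $\|g\|_{\rm sp}=\lim_k\|g^k\|^{1/k}=0=|\lambda|$; if $\lambda\neq 0$ then $g/\lambda\in\fm$, and applying the previous step to $\lambda e_B+g=\lambda(e_B+g/\lambda)$ yields $\|\lambda e_B+g\|_{\rm sp}=|\lambda|\cdot\|e_B+g/\lambda\|_{\rm sp}=|\lambda|$. The main subtlety I anticipate is that $\|\cdot\|_{\rm sp}$ is only a seminorm and may vanish on nonzero elements, so one must argue the upper and lower bounds on $\|e_B+g\|_{\rm sp}$ \emph{separately}; the key technical input that enables the upper bound is the non-Archimedean estimate $|\binom{k}{i}|\le 1$, which is what prevents the binomial expansion from growing with $k$.
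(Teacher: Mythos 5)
Your argument is correct, but it takes a more laborious route than the paper's. The paper, immediately after establishing $\|g\|_{\rm sp}=0$ (from nilpotency of $g\in\fm$) and $\|e_B\|_{\rm sp}=1$, invokes the standard fact recorded just above the lemma in Subsection~2.1.3: for \emph{any} non-Archimedean seminorm, $\|f\|\neq\|g\|$ forces $\|f+g\|=\max\{\|f\|,\|g\|\}$. Since $\|\lambda e_B\|_{\rm sp}=|\lambda|\ne 0=\|g\|_{\rm sp}$ for $\lambda\neq 0$, the conclusion is immediate. Your proof re-derives this equality from scratch in the particular case at hand: the binomial expansion together with $|\binom{k}{i}|\le 1$ gives the upper bound $\|e_B+g\|_{\rm sp}\le 1$, and the Neumann-series inverse $(e_B+g)^{-1}=\sum_{i<n}(-g)^i$ together with submultiplicativity of $\|\cdot\|_{\rm sp}$ gives the lower bound. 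Both halves are sound, and the observation that one must handle the two bounds separately because $\|\cdot\|_{\rm sp}$ may vanish on nonzero elements is accurate; but the paper's cited ``strict triangle equality'' already holds for seminorms (its proof uses only the ultrametric inequality), so your concern is dispatched in one line once that fact is in hand. In short, your proof is valid and self-contained, but it repackages the content of the strict triangle equality rather than using it directly, at the cost of the binomial estimate and the inversion argument, neither of which is needed. Minor remarks: the step $\|e_B\|\ge 1$ is true but unused (all you need is $\|e_B\|>0$, guaranteed by $\|\cdot\|$ being a norm, to conclude $\|e_B\|_{\rm sp}=1$); and when bounding $\|(e_B+g)^{-1}\|_{\rm sp}$ you should note explicitly that $(e_B+g)^{-1}=e_B+g'$ with $g'\in\fm$, so the earlier binomial estimate applies verbatim with $g'$ in place of $g$.
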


\begin{proof} The field $B/\fm$ is a finite extension of $\FF$, thus $B/\fm=\FF$ since $\FF$ is algebraically closed; the decomposition now follows. Notice that $B$ is finite over $\FF$ thus any element $g\in\fm$ is nilpotent, hence $\|g\|_{\rm sp}=0$. Notice that $\|e_B\|\ne 0$ since $\|\cdot\|$ is a norm, hence $\|e_B\|_{\rm sp}=\lim_{k\to\infty}\sqrt[k]{\|e_B^k\|}=\lim_{k\to\infty}\sqrt[k]{\|e_B\|}=1$. Thus $\|\lambda e_B\|_{\rm sp}=|\lambda|>0=\|g\|_{\rm sp}$ for any $0\ne \lambda\in\FF$ and $g\in\fm$, which implies $\|\lambda e_B+g\|_{\rm sp}=|\lambda|$ for all $\lambda\in\FF$ and $g\in\fm$.
\end{proof}
\begin{corollary}\label{cor:aboutval} Let $\FF$ be a field, $A$ be a finite $\FF$-algebra, and set $Z=\Spec A$. Consider a function $f\in \CO(Z)=A$ and the linear operator $L_f\colon \CO(Z)\to \CO(Z)$, defined by  $L_f(a):=fa$.
Then:
\begin{description}
       \item(i) $\CO(Z)=\oplus_{q\in Z}\CO_{Z,q}$,
       \item(ii) the set of eigenvalues of $L_f$ is $\{f(q)\}_{q\in Z}$, and
       \item(iii) if $\FF$ is non-Archimedean and $A$ is equipped with a non-Archimedean norm $\|\cdot\|$ then $\|fe_q\|_{\rm sp}=|f(q)|$ for any $q\in Z$, where $e_q$ denotes the unit element in $\CO_{Z,q}$.
       \end{description}
%
%(1) $\CO(Z)=\oplus_{q\in Z}\CO_{Z,q}$,
%(2) the set of eigenvalues of $L_f$ is $\{f(q)\}_{q\in Z}$, and
%(3) if $\FF$ is non-Archimedean and $A$ is equipped with a
%non-Archimedean norm $\|\cdot\|$ then $\|fe_q\|_{\rm sp}=|f(q)|$ for
%any $q\in Z$, where $e_q$ denotes the neutral element in $\CO_{Z,q}$.
\end{corollary}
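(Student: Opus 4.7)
The statement is an exercise in the structure theory of finite algebras, and I would attack the three parts in the obvious order.

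For (i), I would invoke the structure theorem for commutative Artinian rings: since $A$ is finite dimensional over $\FF$ it is Artinian, so $Z=\Spec A$ is a finite set of closed points, and the canonical map $A\to\prod_{q\in Z}A_{\fm_q}$ is an isomorphism. Because the product is finite it is also a direct sum decomposition $A=\bigoplus_{q\in Z}\CO_{Z,q}$. Each summand $\CO_{Z,q}$ is a local Artinian $\FF$-algebra, whose maximal ideal $\fm_q$ is nilpotent.

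For (ii), the decomposition of (i) makes $L_f$ block diagonal, with one block $L_f|_{\CO_{Z,q}}$ for each $q\in Z$. On $\CO_{Z,q}$ write $f\cdot e_q=f(q)\,e_q+g$ with $g\in\fm_q$; multiplication by the nilpotent element $g$ is a nilpotent operator, so $L_f|_{\CO_{Z,q}}-f(q)\,{\rm Id}$ is nilpotent, and the only eigenvalue of $L_f$ on this block is $f(q)$. Aggregating over the finitely many $q\in Z$ gives (ii). If $\FF$ is not algebraically closed one reads $f(q)$ as an element of the finite extension $\CO_{Z,q}/\fm_q$ and the statement is interpreted after passage to $\overline{\FF}$.

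For (iii), the decomposition in (i) is one of $\FF$-algebras, so restricting the ambient non-Archimedean norm $\|\cdot\|$ to the direct summand $\CO_{Z,q}$ produces a non-Archimedean norm on this local subalgebra; in particular $\|e_q\|\neq 0$. Writing $fe_q=f(q)\,e_q+g$ with $g\in\fm_q$, I would apply Lemma~\ref{lem-about-non-arc-norms} to the local $\FF$-subalgebra $B=\CO_{Z,q}\subseteq A$ to obtain $\|fe_q\|_{\rm sp}=|f(q)|$ directly.

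The only real subtlety lies in (iii): Lemma~\ref{lem-about-non-arc-norms} is formulated under the hypothesis that $\FF$ is algebraically closed, which need not be the case here. The standard remedy is to extend the non-Archimedean norm uniquely to $\overline{\FF}$, tensor up to $A\otimes_\FF\overline{\FF}$, and apply the lemma to each local factor of $\CO_{Z,q}\otimes_\FF\overline{\FF}$; since the spectral seminorm is preserved under isometric finite extensions, the equality $\|fe_q\|_{\rm sp}=|f(q)|$ descends back to $A$. Everything else is formal.
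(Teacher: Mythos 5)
Your proof follows the same route as the paper: decompose $A=\bigoplus_{q\in Z}\CO_{Z,q}$ via the Artinian structure theorem, observe in each local block that $fe_q - f(q)e_q\in\fm_q$ is nilpotent (giving (ii)), and invoke Lemma~\ref{lem-about-non-arc-norms} for (iii). You are right to flag that the corollary as stated omits the algebraic-closure hypothesis needed by Lemma~\ref{lem-about-non-arc-norms} (and already used implicitly in writing $f(q)\in\FF$ in part (ii)); the paper's own proof glosses over this, and your remedy of extending scalars to $\overline{\FF}$ is the standard fix, though in the paper's sole application the field $\FF=\KK$ is algebraically closed, so no harm is done.
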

\begin{proof}
\begin{description}
       \item(i)  $\dim_\FF A<\infty$ implies $\dim Z=0$ and $\CO(Z)=\oplus_{q\in
Z}\CO_{Z,q}$.
       \item(ii) It is sufficient to show that the operator $L_{f|\CO_{Z,q}}\colon \CO_{Z,q}\to \CO_{Z,q}$ has unique
       eigenvalue $f(q)$. Notice that $fe_q=f(q)e_q+g$, where $g\in \fm_q$ is a nilpotent element.
       Thus $L_{f|\CO_{Z,q}}-f(q)Id_{\CO_{Z,q}}$ is nilpotent, which implies the statement.
       \item(iii) Notice that $fe_q=f(q)e_q+g$, where $g\in \fm_q$; thus
$\|fe_q\|_{\rm sp}=|f(q)|$ by Lemma~\ref{lem-about-non-arc-norms}.
       \end{description}
%(1) $\dim_\FF A<\infty$ implies $\dim Z=0$ and $\CO(Z)=\oplus_{q\in Z}\CO_{Z,q}$.
%(2) It is sufficient to show that the operator
% $L_{f|\CO_{Z,q}}\colon \CO_{Z,q}\to \CO_{Z,q}$ has unique eigenvalue
%$f(q)$. Notice that $fe_q=f(q)e_q+g$, where $g\in \fm_q$ hence
%nilpotent. Thus $L_{f|\CO_{Z,q}}-f(q)Id_{\CO_{Z,q}}$ is nilpotent,
%which implies the statement.
%(3) Notice that $fe_q=f(q)e_q+g$, where $g\in \fm_q$; thus
%$\|fe_q\|_{\rm sp}=|f(q)|$ by Lemma~\ref{lem-about-non-arc-norms}.
\end{proof}

\subsection{Symplectic toric manifolds} \label{sec-toric}

\begin{notation*}
Throughout the paper  $M$ denotes a lattice, i.e. a free
abelian
group of finite rank $d$, and  $N=\Hom_\ZZ(M,\ZZ)$ %will denote the
its dual lattice. We use the notation
$M_\RR=M\otimes_\ZZ\RR$ and $N_\RR=N\otimes_\ZZ\RR$ for the
corresponding pair of dual vector spaces of dimension $d$. We shall use the notation $T_N$ and $T_M$ for the algebraic tori $T_N=\Spec\,\FF[M]=N\otimes_\ZZ\FF^*$ and $T_M=\Spec\,\FF[N]=M\otimes_\ZZ\FF^*$ over the base field $\FF$.
\end{notation*}
Let $T=M_\RR/M=N\otimes_\ZZ(\RR/\ZZ)$ be the compact torus of
dimension $d$ with lattice of characters $M$ and lattice of
cocharacters $N$. A $2d-$dimensional {\it symplectic toric
manifold} is a closed connected symplectic manifold $(X,\omega)$
equipped with an effective Hamiltonian $T$-action, and a moment
map  $\mu \colon  X \rightarrow Lie(T)^*=M_\RR$ generating (locally) the
$T$-action on $X$. In other words, for any $g\in T$ there is $x\in
X$ such that $g(x)\ne x$, and for any $\xi\in Lie(T)$ and $x\in X$
we have: $d_x\mu(\xi)=\omega(X_\xi,\cdot)$, where $X_\xi$ denotes
the vector field induced by $\xi$ under the exponential map.

By a well known theorem of Atiyah and Guillemin-Sternberg, the
image of the moment map $\Delta := \mu(X) \subset M_\RR$ is the
convex hull of the images of the fixed points of the action. It
was proved by Delzant~\cite{D} that the moment polytope
$\Delta\subset M_\RR$ has the following properties: (i) there are $d$ edges
meeting at every vertex $v$ (simplicity), (ii) the slopes of all
edges are rational (rationality), and (iii) for any vertex $v$ the
set of primitive integral vectors along the edges containing $v$
is a basis of the lattice $M$ (smoothness). Such a polytope is
called {\it a Delzant polytope}. Recall that any polytope can be
(uniquely) described as the intersection of (minimal set of) closed half-spaces with rational slopes.
Namely, there exist $n_1,\dotsc ,n_r\in N=Hom_\ZZ(M,\ZZ)$ and
$\lambda_1,\dotsc ,\lambda_r\in\RR$, where $r$ is the number of facets
(i.e. faces of codimension one) of $\Delta$ such that
\begin{equation} \label{def-Delzant-Poly-I}
\Delta=\{ m \in M_\RR \ | \ (m, n_k)\geq \lambda _k  \ \ {\rm for
\ every} \ k \}.
\end{equation}

Moreover, Delzant gave a complete classification of symplectic
toric manifolds in terms of the combinatorial data encoded by a
Delzant polytope. In~\cite{D} he associated to a Delzant polytope
$\Delta \subset M_\RR$ a closed symplectic manifold
$(X^{2d}_{\Delta},\omega_{\Delta})$ together with a Hamiltonian
$T$-action and a moment map $\mu_\Delta \colon X^{2d}_{\Delta}
\rightarrow M_\RR$ such that $\mu(X^{2d}_\Delta)=\Delta$. He
showed that $(X^{2d}_{\Delta},\omega_{\Delta})$ is isomorphic (as
Hamiltonian
$T$-space) to ($X^{2d},\omega)$, and % Delzant~\cite{D}
proved that two symplectic toric manifolds are (equivariantly)
symplectomorphic if and only if their Delzant polytopes differ by
a translation and an element of $Aut(M)$. %${\rm SL}(M)$.

The precise relations between the combinatorial data of the
Delzant polytope $\Delta$ and the symplectic structure of $X$ are
as follows: the faces of $\Delta$ of dimension $d'$ are in
one-to-one correspondence with the closed connected equivariant
submanifolds of $X$ of (real) dimension $2d'$, namely to a face
$\alpha$ corresponds the submanifold $\mu^{-1}(\alpha)$. In
particular to facets of $\Delta$ correspond submanifolds of
codimension $2$. Let $z_1,\dotsc ,z_r\in H^2(X,{\mathbb Z})$ be the
Poincar\'e dual of the homology classes of $D_1,\dotsc ,D_r$, where
$D_k$ is the submanifold corresponding to the facet given by
$(m,n_k)=\lambda_k$. Then the cohomology class $[\omega] $ and the
first Chern class $c_1(X)$ are given by
\begin{equation} \label{eq-Chern-omega}  {\frac 1 {2 \pi}}  [\omega]  =
- \sum_{i=1}^r \lambda_k \, z_k, \ \ \ {\rm and} \ \ c_1(X) =
\sum_{i=1}^r z_k \end{equation}

In what follows it would be convenient for us to adopt the
algebraic-geometric point of view of toric varieties which we now
turn to describe.

\subsection{Algebraic Toric Varieties.} \label{subsec-alg-toric}

In this subsection we briefly discuss toric varieties from the
algebraic-geometric point of view.
% in particular we recall the definition of an abstract toric variety (not necessarily smooth or projective).
We refer the reader to the appendix of this paper for the
definitions, and for a more detailed discussion of the notions
that appear below.
%brief sketch of the theory
For a complete exposition of the subject see Fulton's
book~\cite{F93} and Danilov's survey~\cite{D78}.

Let $\sigma\subset N_\RR$ be a strictly convex, rational,
polyhedral cone. One can assign to $\sigma$ an affine toric
variety $X_\sigma=\Spec\,\FF[M\cap \check{\sigma}]$, where
$\check{\sigma}\subset M_\RR$ is the dual cone and $\FF[M\cap
\check{\sigma}]$ is the corresponding commutative semigroup
algebra. If $\tau\subseteq\sigma$ is a face then
$X_\tau\hookrightarrow X_\sigma$ is an open subvariety. In
particular, since $\sigma$ is strictly convex, the affine toric
variety $X_\sigma$ contains the torus
$X_{\{0\}}=\Spec\,\FF[M]=N\otimes_\ZZ\FF^*=T_N$ as a dense open
subset. Furthermore, the action of the torus on itself extends to
the action on $X_\sigma$.

Recall that a collection $\Sigma$ of strictly convex, rational,
polyhedral cones in $N_\RR$ is called a {\it fan} if the following
two conditions hold:
\begin{enumerate}
\item If $\sigma\in \Sigma$ and $\tau\subseteq\sigma$ is a face
then $\tau\in \Sigma$. \item If $\sigma,\tau\in \Sigma$ then
$\sigma\cap\tau$ is a common face of $\sigma$ and $\tau$.
\end{enumerate}
A fan $\Sigma$ is called {\it complete} if
$\cup_{\sigma\in\Sigma}\sigma=N_\RR$. One-dimensional cones in $\Sigma$ are called {\it rays}.
\begin{notation*}
The set of cones of
dimension $k$ in $\Sigma$ is denoted by $\Sigma^k$, and the
primitive integral vector along a ray $\rho$ is denoted by
$n_\rho$.
\end{notation*}

Given a (complete) fan $\Sigma \subset N_\RR$ one can construct a
(complete) toric variety $X_\Sigma=\cup_{\sigma\in\Sigma}X_\sigma$
by gluing $X_\sigma$ and $X_\tau$ along $X_{\sigma\cap\tau}$.
Recall that $X_\Sigma$ has only orbifold singularities if and only
if all the cones in $\Sigma$ are simplicial (in this case it is
called quasi-smooth); and $X_\Sigma$ is smooth if and only if for
any cone $\sigma\in\Sigma$ the set of primitive integral vectors
along the rays of $\sigma$ forms a part of a basis of the lattice
$N$.

The torus $T_N$ acts on $X_\Sigma$ and decomposes it into a
disjoint union of orbits. To a cone $\sigma\in\Sigma$ one can
assign an orbit $O_\sigma\subset X_\sigma$, canonically isomorphic
to $\Spec\,\FF[M\cap\sigma^\bot]$. This defines a one-to-one
order reversing correspondence between the cones in $\Sigma$ and
the orbits in $X_\Sigma$. In particular orbits of codimension one
correspond to rays $\rho\in\Sigma$ and we denote their closures by
$D_\rho$. Thus $\{D_\rho\}_{\rho\in\Sigma^1}$ is the set of
$T_N$-equivariant primitive Weil divisors on the variety
$X_\Sigma$. We remark that the set $\{D_\rho\}_{\rho\in\Sigma^1}$
coincides with the set $\{D_i\}_{1\le i \le r}$ in the setting of
the previous subsection.

For a polytope $\Delta\subset M_\RR$ of dimension $d$ one can
assign a complete fan $\Sigma$ and a piecewise linear strictly
convex function $F$ on $\Sigma$ in the following way: To a face
$\gamma\subseteq\Delta$ we assign the cone $\sigma$ being the dual
cone to the inner angle of $\Delta$ at $\gamma$ (see~\cite{D78} \S
5.8); and if $m$ is a vertex of $\Delta$ and $\sigma_m\in\Sigma$
is the corresponding cone then $F_{|_{\sigma_m}}:=m$. Vice versa,
to a pair $(\Sigma, F)$ one can assign a polytope
\begin{equation} \label{def-Delzant-Poly-II} \Delta_F=\{m\in M_\RR\,|\, (m,n_\rho)\ge F(n_\rho), \ {\rm
for \ every \ \rho} \}.
\end{equation}
This gives a bijective
correspondence between polytopes of dimension $d$ in $M_\RR$ and
pairs $(\Sigma, F)$ as above.
 It is
known (see the Appendix for details) that choosing a piecewise
linear strictly convex function $F$ on $\Sigma$ as above is
equivalent to introducing a symplectic structure $\omega$ on
$X_\Sigma$ (such that the torus action is Hamiltonian) together with a moment map.
Under this identification, the polytope $\Delta_F$
$(\ref{def-Delzant-Poly-II})$ coincides with the polytope $\Delta$
$(\ref{def-Delzant-Poly-I})$ of the symplectic manifold
$(X_{\Sigma}, \omega)$ with the corresponding moment map. As
mentioned before, in what follows, it will be more convenient for
us to adopt the algebraic point of view and to consider the pair
$(X_\Sigma, F)$ instead of the symplectic toric manifold
$(X,\omega)$.

For a real/rational/integral piecewise linear function $F$ on a
fan $\Sigma$ one can associate a $T_N-$equivariant
$\RR/\QQ/\ZZ-$Cartier divisor
$D=-\sum_{\rho\in\Sigma^{1}}F(n_\rho)D_\rho$. Moreover, any
$\RR/\QQ/\ZZ-$ Cartier divisor is equivalent to a $T_N-$equivariant
$\RR/\QQ/\ZZ-$Cartier divisor of this form. Integral $T_N-$equivariant
Cartier divisors are called $T-$divisors. It is well known
that strictly convex piecewise linear functions $F$ correspond to ample divisors.
Moreover, if $F$ is integral then the Cartier divisor
$D=-\sum_{\rho\in\Sigma^{1}}F(n_\rho)D_\rho$ corresponds to an
invertible sheaf (i.e., a line bundle) $\CL=\CO_{X_\Sigma}(D)$
together with a trivialization $\phi\colon \CL_{|_{T_N}}\to
\CO_{T_N}$ defined up-to the natural action of $\FF^*$.
\begin{remark} \label{rem-on-sections} {\rm For an integral
function $F$ as above, the trivialization $\phi$ identifies the global sections of $\CO_{X_\Sigma}(D)$ with functions on $T_N$, furthermore the following holds
\begin{equation} H^0(X_\Sigma, \CO_{X_\Sigma}(D)) \simeq
\Span\{x^m\}_{m\in\Delta_F\cap M}\subset \CO(T_N). \end{equation}
}\end{remark}

Let $F$ be an integral strictly convex piecewise linear function on $\Sigma$. Recall that the orbits in $X_\Sigma \subset N_\RR$ are in
one-to-one order reversing correspondence with the cones in
$\Sigma$, hence they are in one-to-one order preserving
correspondence with the faces of $\Delta_F$. Let $\gamma \subset
M_\RR$ be a face of $\Delta_F$, let $\sigma_\gamma\in\Sigma$ be
the corresponding cone, and let $V=\overline{O}_{\sigma_\gamma}\subset X_\Sigma$ be the closure of the corresponding orbit. Then
$V$ has a structure of a toric variety with respect to the action of the torus $\Spec\,\CCC[M\cap \sigma_\gamma^\perp]$, and the
restriction $\CL_V$ of $\CL$ to $V$ is an ample line bundle on $V$; however, $\CL_V$ has no distinguished trivialization. To define a trivialization one must pick an integral point $p$ in the affine space $\Span(\gamma)$ (e.g. a vertex of $\gamma$) and this defines an isomorphism between $\CL_V$ and the line bundle associated to the polytope $\gamma-p\subset \sigma_\gamma^\perp$.

\subsection*{Toric Fano Varieties and Reflexive Polytopes.}
Let $\Delta \subset M_\RR$ be a polytope containing $0$ in its interior.
The dual polytope  $\Delta^* \subset N_\RR$ is defined to be  $$\Delta^* = \{ n \in N_\RR \ | \ (m,n)\geq -1, \ {\rm for \ every} \ m \in \Delta \}.$$ Notice that
its vertices are precisely the inner normals to the facets of
$\Delta$. The polytope $\Delta\subset M_\RR$ is called reflexive
if (i) $0$ is contained in its interior, and (ii) both $\Delta$
and $\Delta^*$ are integral polytopes. Note that if $\Delta$ is
reflexive then $0$ is the only integral point  in its interior. It
is not hard to check (cf.~\cite{Bat2}) that $\Delta$ is reflexive
if and only if its dual $\Delta^*$ is reflexive.

A complete algebraic variety is called Fano if its anti-canonical class is
Cartier and ample. Recall
that if $X_\Sigma$ is Fano and $K=-\sum D_\rho = -\sum
F_K(n_\rho)D_\rho$ is the standard canonical $T$-divisor then
$\Delta_{-F_K}=\Delta_{F_{-K}}$ is reflexive, here $F_K$ is a
 piecewise linear function defined by the following property:
$F_K(n_\rho)=1$ for any $\rho \in \Sigma^1$. Moreover, if $\Delta$
is reflexive then there exists a unique toric Fano variety
$X_\Sigma$ such that $\Delta=\Delta_{F_K}$, where $K=-\sum
D_\rho$, and $F_K$ is as above.

Let $X_\Sigma$ be a toric Fano variety, $\Delta=\Delta_{F_{-K}}$ be
the reflexive polytope assigned to the anticanonical divisor $-K=\sum
D_\rho$, and $\Delta^*$ be the dual reflexive polytope. Consider
the dual toric Fano variety $X_\Sigma^*=X_{\Sigma^*}$ assigned to
the polytope $\Delta^*$. Then the fan $\Sigma$ coincides with the
fan over the faces of $\Delta^*$, and the fan $\Sigma^*$ is the
fan over the faces of $\Delta$.

Let now $X=X_\Sigma$ and $X^*=X_\Sigma^*$ be a pair of dual toric
Fano varieties, and assume that $X$ is smooth. Then any maximal
cone in $\Sigma$ is simplicial, and is generated by a basis of $N$;
hence the facets of the dual  polytope $\Delta^*$ are basic
simplexes. Thus the irreducible components of the complement of
the big orbit in $X^*$ are isomorphic to $\PP^{d-1}$.
Furthermore, the restriction of the anticanonical linear system
$\CO_{X^*}(-K_{X^*})$ to such a component is isomorphic
to the anti-tautological line bundle $\CO_{\PP^{d-1}}(1)$.

%\subsection*{Cohomology of toric varieties.}
%Let $\Sigma$ be a simplicial fan, and let $X_\Sigma$ be the corresponding toric variety. Then any cohomology class has an equivariant representative. Thus $A^k(X_\Sigma)_\QQ=H^{2k}(X_\Sigma,\QQ)$ is generated as a vector space by the closures of $k-$dimensional orbits. Notice that any such closure $V$ is an intersection of some equivariant divisors $D_\rho$ with appropriate multiplicity that depends on the singularity of  $X_\Sigma$ along $V$.
%Thus we have a surjective homomorphism of algebras $\QQ[z_\rho]_{\rho\in\Sigma^1}\to A^*(X_\Sigma)_\QQ=H^{2*}(X_\Sigma,\QQ)$.

%Consider the following two ideals in $\QQ[z_\rho]_{\rho\in\Sigma^1}$: The ideal $P(X_\Sigma)\subset\QQ[z_\rho]_{\rho\in\Sigma^1}$ generated by $\sum_{\rho\in\Sigma^1}(m,n_\rho)z_\rho$, $m\in M$; and the Stanley-Reisner ideal, i.e. the ideal generated by $\prod_{i=1}^kz_{\rho_i}$ where $\rho_1,\dotsc ,\rho_k$ do not generate a cone in $\Sigma$. It is well known (see \cite{F93} Chapter 5) that the Chow ring (the cohomology) is given by
%$$A^*(X_\Sigma)_\QQ=H^{2*}(X_\Sigma,\QQ)=\frac{\QQ[z_\rho]_{\rho\in\Sigma^1}}{P(X_\Sigma)+SR(X_\Sigma)}.$$

\begin{remark}
Before we finish this subsection we wish to recall the
following two facts: (i) (see~\cite{F93} section 3.2) the Euler
characteristic of a quasi-smooth complete toric variety is equal
to $|\Sigma^d|$, and (ii) (Kushnirenko's theorem, a particular
case of Bernstein's theorem - see~\cite{F93} section 5.3) if $D$
is an ample $T$-divisor on a toric variety $X_\Sigma$, and
$\Delta\subset M_\RR$ is the corresponding polytope, then the
intersection number $D^d$ is given by $D^d=d!{\rm
Volume}(\Delta)$, where the volume is relative to the lattice $M$.
%This follows from the observation that $D^d/d!$ is the leading
%coefficient of the polynomial $\chi(l)=\chi(\CO(lD))$ which is
%equal to ${\rm Volume}(\Delta)$. }
\end{remark}

\section{The Quantum Cohomology} \label{subsec-qh}
Below are three equivalent descriptions of the quantum cohomology
of Fano toric varieties.
\subsection{Symplectic Definition} \label{qh-sym-def}
We start with a symplectic definition of the quantum homology (and
cohomology) of a $2d$-dimensional symplectic manifold $(X,\omega)$,
using Gromov-Witten invariants. We refer the reader to~\cite{MS} and
the references within for a more detailed exposition. For
simplicity, throughout the text we assume that $(X,\omega)$ is
semi-positive manifold (see e.g. Subsection 6.4 in~\cite{MS}). The
class of symplectic toric Fano manifolds is a particular example.
%Recall that $M$ is said to
%be rational if the set $\{ \omega(A) \ | \ A \in \pi_2(M) \}$ is a
%discrete subset of ${\mathbb R}$.

By abuse of notation, we write $\omega(A)$ and $c_1(A)$ for the
results of evaluation of the cohomology classes $[\omega]$ and $c_1$
on $A \in H_2(X;{\mathbb Z})$. Here $c_1 \in H^2(X;{\mathbb Z})$
denotes the first Chern class of $X$.
%Let $H_2^S(X)$ be the group of spherical integral homology classes of $X$.
%Set $\hat \pi_2(X) = H_2^S(X) \ / \ ({\rm ker} \ c_1
%\cap {\rm ker} \ \omega),$ where $H_2^S(X)$ stands for the group of
%spherical integral homology classes of $X$.
%, and $\Gamma =
%\Gamma(X,\omega) = [w] (H_2^S(X)) \subset {\mathbb R}$. Define a
%field ${\cal K}={\cal K}_{\Gamma}$ whose elements are generalized
%Laurent series as
%$${\cal K} = \left \{ \sum_{\theta \in \Gamma} z_{\theta} s^{\theta}  \ | \ z_{\theta}
% \in {\mathbb C}, \ \#  \{ \theta > c \ | \ z_{\theta} \neq 0 \} < \infty,
%\ \forall c \in {\mathbb R}   \right \}.$$
We denote by $\KK^{\scriptscriptstyle \downarrow}$ %=\cup_{k\in\NN}\CCC((s^{1/k}))$
the field of generalized Laurent series over $\CCC$.
%Puiseux series over $\CCC$ whose elements may be
%identified with Laurent series in $s^{1/m}$ for some $m \in \NN$.
More precisely,
\begin{equation} \label{def-the-field-K} \KK^{\scriptscriptstyle \downarrow} = \Bigl \{
\sum_{\lambda \in \RR}  a_{\lambda} s^{\lambda} \ | \ a_{\lambda}
\in \CCC, \ {\rm and} \ \{ \lambda \, | \, a_\lambda \neq 0 \} \
{\rm is \ discrete \ and \ bounded \ above \ in \ \RR } \Bigr \}
\end{equation}
Similarly, we define $\KK^{\scriptscriptstyle \uparrow}$ to be the
field of generalized Laurent series where the set $\{\lambda \, |
\, a_\lambda \neq 0 \}$ is discrete and bounded from below in
$\RR$.
In the definition of the quantum homology we shall use
%(as a coefficients ring)
the Novikov ring $\Lambda^{\scriptscriptstyle
\downarrow}:={\KK^{\scriptscriptstyle \downarrow}}[q,q^{-1}]$.
%of polynomial in $q$ and $q^{-1}$ with coefficients in the field ${\KK^{\scriptscriptstyle \downarrow}}$;
and in the definition of the
quantum cohomology we use the ``dual'' ring
$\Lambda^{\scriptscriptstyle \uparrow}:={\KK^{\scriptscriptstyle
\uparrow}}[q,q^{-1}]$. By setting $\deg(s)=0$ and $\deg(q)=2$ we introduce the structure of graded rings on $\Lambda^{\scriptscriptstyle
\downarrow}$ and $ \Lambda^{\scriptscriptstyle \uparrow}$.

As a graded module %over $\Lambda$ ($\check \Lambda)$,
the quantum homology (cohomology) algebra of $(X,\omega)$ is defined to be
$$QH_*(X,\omega) = H_*(X,{\mathbb Q}) \otimes_{\mathbb Q} \Lambda^{\scriptscriptstyle \downarrow},
\ \ QH^*(X,\omega) = H^*(M,{\mathbb Q}) \otimes_{\mathbb Q}
\Lambda^{\scriptscriptstyle \uparrow}.$$
The grading on $QH_*(X,\omega)$
(respectively on $QH^*(X,\omega)$) is given by ${\rm deg}(a \otimes
s^{\lambda}q^{j}) = {\rm deg}(a) + 2j$, where ${\rm deg}(a)$ is
the standard degree of the class $a$ in the homology (cohomology)
of $(X,\omega)$. Next we define the quantum product (cf~\cite{MS}). We
start with the quantum homology $QH_*(X,\omega)$. For $a \in
H_i(X,{\mathbb Q})$ and $b \in H_j(X,{\mathbb Q})$, define $(a\otimes 1)
* (b\otimes 1) \in QH_{i+j-2d}(X,\omega)$ by
$$ (a\otimes 1) * (b\otimes 1) = \sum_{A \in H_2^S(X)} (a * b)_A \otimes s^{-\omega(A)}
q^{-c_1(A)},$$ where $(a * b)_A \in H_{i+j-2d+2c_1(A)}(M,{\mathbb
Q})$ is defined by the requirement that
$$ (a*b)_A  \circ c = GW_A(a,b,c), \ \ {\rm for \ all} \ c \in H_*(X,{\mathbb Q}).$$
Here $\circ$ is the usual intersection index and $GW_A(a,b,c)$
denotes the Gromov-Witten invariant that, roughly speaking, counts
the number of pseudo-holomorphic spheres representing the class $A$
and intersecting with generic representative of each $a,b,c \in
H_*(X,{\mathbb Q})$ (see e.g.~\cite{MS},~\cite{RT1},
and~\cite{RT2} for the precise definition). The product $*$ is
extended to the whole $QH_*(X,\omega)$ by linearity over
$\Lambda^{\scriptscriptstyle \downarrow}$. Thus, one gets a
well-defined commutative, associative product operation $*$ respecting the grading
on $QH_*(X,\omega)$, which is a deformation of the classical cap-product
in singular homology
(see~\cite{MS},~\cite{RT1},~\cite{RT2}~\cite{Liu}, and~\cite{W}).
Note that the fundamental class $[X]$ is the unity with respect to
the quantum multiplication $*$, and that $QH_*(X,\omega)$ is a
finite-rank module over $\Lambda^{\scriptscriptstyle \downarrow}$.
Moreover, if $a,b \in QH_*(X,\omega)$ have graded degrees $deg(a)$ and
$deg(b)$ respectively, then $ deg((a\otimes 1) * (b\otimes 1)) = deg(a) + deg(b) - 2d.$

Due to some technicalities and although the above definition is
more geometric, in what follows we shall mainly use the quantum
cohomology. The quantum product in this case is defined using
Poincar\'{e} duality i.e., for $\alpha,\beta \in H^*(X,{\mathbb
Q})$ with Poincar\'{e} duals $a = {\rm PD}(\alpha), b = {\rm
PD}(\beta)$ we define
$$ (\alpha\otimes 1) * (\beta\otimes 1)  = {\rm PD}_q (a * b) := \sum_{A \in H_2^S(X)} {\rm PD}((a * b)_A) \otimes s^{\omega(A)}
q^{c_1(A)},$$ where the quantum Poincar\'{e} dual map ${\rm PD}_q \colon
QH^*(X,\omega) \rightarrow QH_*(X,\omega)$ is the obvious variation
of the standard Poincar\'{e} dual given by ${\rm PD}_q(\alpha
\otimes s^{\lambda}q^{j}) = {\rm PD}(\alpha) \otimes
s^{-\lambda}q^{-j}$.

As mentioned in the introduction, our main object of study is the
%In what follows we will work exclusively with the
subalgebra $QH_{2d}(X,\omega)$, which is the graded component of degree
$2d$ in the quantum homology algebra $QH_*(X,\omega)$. It is not hard to
check that it is a commutative algebra of finite rank over the
field ${\KK}^{\scriptscriptstyle \downarrow}$. The above mentioned
(quantum) Poincar\'{e} duality induces an isomorphism between the
quantum homology and cohomology (see~\cite{MS} remark 11.1.16).
Hence, in what follows we will work with the algebra $QH^0(X,\omega)$
over the field ${\KK}^{\scriptscriptstyle \uparrow}$ instead of
the algebra  $QH_{2d}(X,\omega)$ over ${\KK}^{\scriptscriptstyle
\downarrow}$.
\begin{convention*}
From this point on we set $\KK := {\KK}^{\scriptscriptstyle
\uparrow}$ and use the Novikov ring $\Lambda := \KK[q,q^{-1}].$
\end{convention*}
\begin{remark}\label{rem:normOnKandQH} Notice that the field $\KK$ is a non-Archimedean field with respect
to the non-Archimedean norm $\left|\sum a_{\lambda}
s^{\lambda}\right|:=10^{-\inf\{\lambda\,|\,a_\lambda\ne 0\}}.$ It
is known that $\KK$ is algebraically closed. Notice also that the
map $\|\cdot\|\colon QH^*(X,\omega)\to \RR_+$ defined by
$\|\sum_{\lambda, j} a_{\lambda j}s^\lambda
q^j\|=10^{-\inf\{\lambda\,|\,\exists\, a_{\lambda j}\ne 0\}}$,
where $a_{\lambda j}\in H^*(X,\CCC)$, is a non-Archimedean norm on
the quantum cohomology algebras $QH^*(X,\omega)$ and
$QH^0(X,\omega)$.
\end{remark}

%Choose a basis $a_{ij}\in H^{-2j}(X, \QQ)$, and consider the map $\|\cdot\|\colon QH^0(X)\to \RR_+$ defined by
%$\|\sum a_{ij}h_{ij}(s)q^j\| = \max |h_{ij}(s)|$.
%Then $\|\cdot\|$ is a non-Archimedean norm.

\subsection{Batyrev's Description of the Quantum Cohomology}

In~\cite{Bat1}, Batyrev proposed a combinatorial description of
the quantum cohomology algebra of toric Fano manifolds, using a
``quantum'' version of the ``classical" Stanley-Reisner ideal.
This was later proved by Givental in~\cite{Giv},~\cite{Giv1}. For
a different approach to the proof we refer the reader to
McDuff-Tolman~\cite{MT} and Cieliebak-Salamon~\cite{CS}.

Before describing Batyrev's work let us first briefly recall the
definition of the classical cohomology of toric Fano manifolds.
The complete details can be found in~\cite{D78} \S 10,11,12,
and~\cite{F93} section 3.2 and Chapter 5.

Let $\Sigma$ be a simplicial fan, and let $X_\Sigma$ be the
corresponding toric variety over $\CCC$. It is known that any cohomology class
has an equivariant representative. Thus,
%$A^k(X_\Sigma)_\QQ=
$H^{2k}(X_\Sigma,\QQ)$ is generated as a vector space by the
closures of $k$-dimensional orbits. Notice that any such closure
$V$ is an intersection of some equivariant divisors $D_\rho$ with
appropriate multiplicity that depends on the singularity of the
$X_\Sigma$ along $V$. To be more precise, if
$V=\overline{O_\sigma}$, $\sigma\in\Sigma^k$, and
$\rho_1,\dotsc ,\rho_k$ are the rays of $\sigma$ then $V=\frac{1}{{\rm
mult}(\sigma)}\prod_{i=1}^kD_{\rho_i}$, where ${\rm mult}(\sigma)$
denotes the covolume of the sublattice spanned by
$n_{\rho_1},\dotsc ,n_{\rho_k}$ in the lattice $\Span(\sigma)\cap N$.
Thus we have a surjective homomorphism of algebras
$\psi\colon \QQ[z_\rho]_{\rho\in\Sigma^1}\to
%A^*(X_\Sigma)_\QQ=
H^{2*}(X_\Sigma,\QQ)$, where $\QQ[z_\rho]_{\rho\in\Sigma^1}$ is
the polynomial algebra in free variables $z_\rho$ indexed by the rays $\rho\in\Sigma^1$.

Let $x^m\in\CCC[M]$ be a rational function on $X_\Sigma$. Then
$div(x^m)=\sum_{\rho\in\Sigma^1}(m,n_\rho)D_\rho$. Thus
$\sum_{\rho\in\Sigma^1}(m,n_\rho)z_\rho\in \Ker(\psi)$ for any
$m\in M$. We denote by
$P(X_\Sigma)\subset\QQ[z_\rho]_{\rho\in\Sigma^1}$ the ideal
generated by $\sum_{\rho\in\Sigma^1}(m,n_\rho)z_\rho$, $m\in M$.
Notice that if $\rho_1,\dotsc ,\rho_k$ do not generate a cone in
$\Sigma$ then $\cap_{i=1}^kD_{\rho_i}=\emptyset$, and thus
$\prod_{i=1}^kz_{\rho_i}\in \Ker(\psi)$. We denote by
$SR(X_\Sigma)\subset\QQ[z_\rho]_{\rho\in\Sigma^1}$ the
Stanley-Reisner ideal, i.e. the ideal generated by
$\prod_{i=1}^kz_{\rho_i}$ where $\rho_1,\dotsc ,\rho_k$ do not
generate a cone in $\Sigma$. It is well known that
$\Ker(\psi)=P(X_\Sigma)+SR(X_\Sigma)$, and hence
$$%A^*(X_\Sigma)_\QQ=
H^{2*}(X_\Sigma,\QQ)=\frac{\QQ[z_\rho]_{\rho\in\Sigma^1}}{P(X_\Sigma)+SR(X_\Sigma)}.$$

We turn now to Batyrev's description of the quantum cohomology.
 We say that the set of rays
$\rho_1,\dotsc ,\rho_k$ is a {\it primitive collection} if
$\rho_1,\dotsc ,\rho_k$ do not generate a cone in $\Sigma$ while any
proper subset does generate a cone in $\Sigma$. Notice that the set
of monomials $\prod_{i=1}^k z_{\rho_i}$ assigned to primitive
collections forms a minimal set of generators of $SR(X_\Sigma)$. The
quantum version of the Stanley-Reisner ideal $QSR(X_\Sigma)$ is
generated by the quantization of the minimal set of generators
above.

More precisely, assume that we are given a smooth Fano toric
variety $X_\Sigma$, and a piecewise linear strictly convex
function $F$ on $\Sigma$ defining an ample $\RR$-divisor on
$X_\Sigma$. Let $C$ be a primitive collection of rays. Then
$\sum_{\rho\in C}n_\rho$ belongs to a cone $\sigma_C\in\Sigma$,
and we assume that $\sigma_C$ is the minimal cone containing it.
%It follows from the definition of the primitive collection
It is not hard to check that $\sigma_C$ does not contain $\rho$
for all $\rho\in C$ (cf~\cite{Bat1}). Since $X_\Sigma$ is smooth,
$\sum_{\rho\in C}n_\rho=\sum_{\rho\subseteq\sigma_C}a_\rho
n_\rho$, where $a_\rho$ are strictly positive integers. We define
the quantization of the generator $\prod_{\rho\in C} z_\rho$ to be
$$\prod_{\rho\in
C}q^{-1}s^{-F(n_\rho)}z_\rho-\prod_{\rho\subseteq\sigma_C}(q^{-1}s^{-F(n_\rho)}z_\rho)^{a_\rho}$$
The quantum version of $SR(X_\Sigma)$ is the ideal
$QSR(X_\Sigma,F)\subset \Lambda[z_\rho]_{\rho\in\Sigma^1}$
generated by the quantization of the minimal set of generators. We define Batyrev's
quantum cohomology to be
$$QH^*_B(X_\Sigma,F;\Lambda):=\frac{\Lambda[z_\rho]_{\rho\in\Sigma^1}}{P(X_\Sigma)+QSR(X_\Sigma,F)},$$
and
$$QH^*_B(X_\Sigma,F;\KK):=QH^*_B(X_\Sigma,F;\Lambda)\otimes_{\Lambda}\Lambda \, / \, \langle q-1 \rangle .$$ As mentioned
above, the following result was originally proposed by Batyrev
\cite{Bat1} and proved by Givental \cite{Giv,Giv1}. For a proof
using notation and conventions similar to ours see~\cite{MT}. Recall
that $(X,\omega)$ and $(X_\Sigma,F)$ represents the same symplectic
toric Fano manifold as explained in
Subsection~\ref{subsec-alg-toric}} above.

\begin{theorem} \label{Batyrev's-description-of-QH} For a symplectic toric Fano manifold $(X,\omega)=(X_\Sigma,F)$
there is a ring isomorphism \begin{equation} \label{eq-QH=QHB}
QH^*(X,\omega) \simeq QH^*_B(X_\Sigma,F;\Lambda) \end{equation}
\end{theorem}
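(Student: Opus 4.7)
The plan is to construct an explicit $\Lambda$-algebra homomorphism $\Phi\colon \Lambda[z_\rho]_{\rho\in\Sigma^1} \to QH^*(X,\omega)$ and show that it descends to an isomorphism on the quotient. First I would set $\Phi(z_\rho) := q\, s^{F(n_\rho)} [D_\rho]$, where $[D_\rho]\in H^2(X,\CCC)$ is the Poincar\'e dual of the equivariant divisor corresponding to $\rho$, and extend $\Lambda$-linearly. The weights by $q$ and $s^{F(n_\rho)}$ are chosen precisely so that the proposed quantum relations become homogeneous of degree zero in the grading where $\deg(q)=2$, $\deg(s)=0$, and so that the $\omega$-area contribution $s^{-\omega(A)}$ (which appears in the quantum product via $s^{F(n_\rho)}$ through $\omega(A)=\sum_\rho F(n_\rho)(D_\rho\cdot A)$) is absorbed correctly.

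Next I would verify that the linear ideal $P(X_\Sigma)$ lies in $\Ker(\Phi)$: this is immediate since the relations $\sum_\rho (m,n_\rho)[D_\rho]=0$ already hold in classical cohomology $H^*(X,\CCC)$, and the quantum product is a deformation of the cap product, so the degree-$2$ part is unchanged. The core of the argument is then to check that for every primitive collection $C=\{\rho_1,\dotsc,\rho_k\}$ with associated primitive relation $\sum_{\rho\in C}n_\rho=\sum_{\rho\subseteq\sigma_C}a_\rho n_\rho$, one has the quantum identity
\begin{equation*}
[D_{\rho_1}]*\cdots*[D_{\rho_k}] \;=\; s^{-\omega(A_C)}q^{-c_1(A_C)}\prod_{\rho\subseteq\sigma_C}[D_\rho]^{a_\rho},
\end{equation*}
where $A_C\in H_2(X;\ZZ)$ is the effective curve class characterized by the intersection numbers $A_C\cdot D_\rho = 1$ for $\rho\in C$, $A_C\cdot D_\rho=-a_\rho$ for $\rho\subseteq\sigma_C$, and $0$ otherwise. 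This requires showing that (i) the classical term of the product vanishes because $C$ is primitive (i.e.\ the divisors do not meet), and (ii) among all effective classes $A$ contributing to the $k$-fold Gromov--Witten product, only $A_C$ yields a non-zero invariant, with $\langle [D_{\rho_1}],\dotsc,[D_{\rho_k}]\rangle_{A_C}$ equal to the classical intersection $\prod_{\rho\subseteq\sigma_C}[D_\rho]^{a_\rho}$ up to the correct exponent of $s$ and $q$. Translating this back through $\Phi$ gives exactly the quantization $\prod_{\rho\in C}q^{-1}s^{-F(n_\rho)}z_\rho - \prod_{\rho\subseteq\sigma_C}(q^{-1}s^{-F(n_\rho)}z_\rho)^{a_\rho}$ of $\prod z_\rho$, so $QSR(X_\Sigma,F)\subseteq \Ker(\Phi)$.

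Finally I would promote the induced map $\bar\Phi\colon QH^*_B(X_\Sigma,F;\Lambda)\to QH^*(X,\omega)$ to an isomorphism. Surjectivity follows from the fact that the $[D_\rho]$ generate $H^*(X,\CCC)$ as a $\CCC$-algebra (a standard fact for smooth projective toric varieties), and the quantum product agrees with the classical product modulo positive powers of $q^{-1}$, so the image of $\bar\Phi$ contains a $\Lambda$-basis of $QH^*(X,\omega)$. Injectivity then follows by comparing ranks: both sides are free $\Lambda$-modules of rank $|\Sigma^d|=\chi(X)$, the first by a direct inspection of a $\Lambda$-basis coming from the Stanley--Reisner presentation, and the second from the definition of $QH^*$ as $H^*(X,\CCC)\otimes_\CCC \Lambda$ together with $\dim_\CCC H^*(X,\CCC)=\chi(X)$ for toric manifolds.

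The main obstacle is step (ii) of the GW computation: enumerating all effective classes contributing to $\langle [D_{\rho_1}],\dotsc,[D_{\rho_k}]\rangle_A$ and establishing that $A_C$ is the unique contribution with the prescribed intersection count. The Fano assumption is crucial here, as it forces $c_1(A)>0$ on effective classes and bounds the relevant moduli spaces; the cleanest route is via toric equivariant localization on the moduli of stable maps, as carried out by Givental in \cite{Giv,Giv1}, or via the geometric degeneration arguments in \cite{MT,CS}. Once this vanishing/uniqueness statement is in hand, the rest of the proof is essentially bookkeeping of the $s,q$-weights.
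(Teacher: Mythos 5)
The paper does not actually prove Theorem~\ref{Batyrev's-description-of-QH}; it cites it as an established result, originally conjectured by Batyrev~\cite{Bat1}, proved by Givental~\cite{Giv,Giv1}, with alternative proofs in McDuff--Tolman~\cite{MT} and Cieliebak--Salamon~\cite{CS}. Your proposal sketches the standard structure of such a proof (build the natural $\Lambda$-algebra map from the free polynomial algebra, verify the linear and quantum Stanley--Reisner relations lie in the kernel, then compare ranks), and you are candid that the decisive step --- establishing which effective classes contribute to the $k$-fold Gromov--Witten product associated to a primitive collection, and computing the single surviving invariant --- is exactly the content of the cited works. In that sense your write-up is not a proof but a reduction to the same black box the authors invoke, which is a legitimate reading of what the paper itself does here.

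Two smaller points. First, your quantum Stanley--Reisner relation has the wrong sign on the Novikov exponents for the cohomological conventions adopted in the paper: there the quantum product in $QH^*$ carries weight $s^{\omega(A)}q^{c_1(A)}$ (positive exponents, with $\Lambda=\Lambda^{\uparrow}$), so the relation you want is $[D_{\rho_1}]\ast\cdots\ast[D_{\rho_k}] = s^{\omega(A_C)}q^{c_1(A_C)}\prod_{\rho\subseteq\sigma_C}[D_\rho]^{a_\rho}$. The negative exponents $s^{-\omega(A)}q^{-c_1(A)}$ belong to the homology side with $\Lambda^{\downarrow}$; you have mixed the two. Second, with the assignment $\Phi(z_\rho)=q\,s^{F(n_\rho)}[D_\rho]$ the image of the quantized generator is $\prod_{\rho\in C}[D_\rho]-\prod_{\rho\subseteq\sigma_C}[D_\rho]^{a_\rho}$ with no $q,s$ prefactor, so the normalization on $\Phi$ and the claimed form of the quantum relation need to be made consistent: either choose $\Phi(z_\rho)=[D_\rho]$ and track the $q,s$ factors explicitly, or keep the weighted assignment and rewrite the target relation accordingly. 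These are bookkeeping errors rather than conceptual gaps, and they do not affect the overall assessment that the real mathematical content is being delegated to \cite{Giv,Giv1,MT,CS}, just as in the paper.
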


 We wish to remark that the identification~$(\ref{eq-QH=QHB})$ may fail without the Fano
 assumption (see~\cite{CK99} example 11.2.5.2 and~\cite{MT}).

\subsection{The Landau-Ginzburg Superpotential}\label{sec:LGP}

Here we present an analytic description of the quantum cohomology
algebra for symplectic toric Fano varieties which arose from the
study of the corresponding Landau-Ginzburg model in
Physics~\cite{LVW},~\cite{V1},~\cite{HV}. We will follow the works
of Batyrev~\cite{Bat1}, Givental~\cite{Giv}, Hori-Vafa~\cite{HV},
Fukaya-Oh-Ohta-Ono~\cite{FOOO}, and describe an isomorphism
between the quantum cohomology algebra of a symplectic toric Fano
manifold $X$ and the Jacobian ideal of the superpotential
corresponds to the Landau-Ginzburg mirror model of $X$.

Let $X_\Sigma$ be a smooth Fano toric variety, and let $F$ be a
piecewise linear strictly convex  function on $\Sigma$ defining an
ample $\RR$-divisor on $X_\Sigma$. Consider the Landau-Ginzburg
superpotential
$$W_{F,\Sigma}:=\sum_{\rho\in\Sigma^1}s^{F(n_\rho)}x^{n_\rho}$$
defined on the torus $\Spec\KK[N]$. This function can be considered
also as a section of the anti-canonical line bundle on the dual
toric Fano variety $X_\Sigma^*$ over the field $\KK$ (see
Remark~\ref{rem-on-sections}). One assigns to such a function the
Jacobian ring $\KK[N]/J_{W_{F,\Sigma}}$, where $J_{W_{F,\Sigma}}$
denotes the Jacobian ideal, i.e. the ideal generated by all partial
(log-)derivatives of $W_{F,\Sigma}$.

\begin{proposition}\label{prop:LGmodel} If $(X_\Sigma,F)$ is a rational
smooth symplectic toric Fano variety, and $W_{F,\Sigma}$ as above
then
$$  QH^*(X,\omega) \cong QH_B^*(X_\Sigma,F;\Lambda)\cong \Lambda[N]/J_{W_{F,\Sigma}},$$
and in particular
$$  QH^0(X,\omega) \cong QH_B^*(X_\Sigma,F;\KK)\cong \KK[N]/J_{W_{F,\Sigma}}.$$
\end{proposition}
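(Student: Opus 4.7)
The first isomorphism $QH^*(X,\omega) \cong QH_B^*(X_\Sigma, F; \Lambda)$ is precisely Theorem~\ref{Batyrev's-description-of-QH} (Batyrev--Givental), so the task reduces to establishing the second isomorphism $QH_B^*(X_\Sigma, F; \Lambda) \cong \Lambda[N]/J_{W_{F,\Sigma}}$; the $\KK$-variant then follows by specializing $q=1$. The key is the substitution
\[
\Phi : \Lambda[z_\rho]_{\rho \in \Sigma^1} \longrightarrow \Lambda[N]/J_{W_{F,\Sigma}}, \qquad z_\rho \longmapsto q\cdot s^{F(n_\rho)}\, x^{n_\rho},
\]
which respects the grading since $\deg z_\rho = \deg q = 2$ while $\deg s^\lambda = \deg x^n = 0$.

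To see that $\Phi$ factors through the Batyrev presentation, I would check that both defining ideals are sent into $J_{W_{F,\Sigma}}$. For a linear relation indexed by $m\in M$, a direct computation gives
\[
\Phi\!\left(\sum_{\rho \in \Sigma^1} (m,n_\rho)\, z_\rho\right) = q\sum_{\rho\in \Sigma^1}(m,n_\rho)\, s^{F(n_\rho)} x^{n_\rho} = q\cdot \partial_m W_{F,\Sigma},
\]
which is precisely a logarithmic-derivative generator of $J_{W_{F,\Sigma}}$ (up to the unit $q$). For a quantum Stanley--Reisner generator attached to a primitive collection $C$, observe that $\Phi(q^{-1}s^{-F(n_\rho)}z_\rho) = x^{n_\rho}$, so the generator is sent to $x^{\sum_{\rho\in C}n_\rho} - x^{\sum_{\rho \subseteq \sigma_C}a_\rho n_\rho}$, which already vanishes in $\Lambda[N]$ by the defining identity $\sum_{\rho \in C} n_\rho = \sum_{\rho \subseteq \sigma_C} a_\rho n_\rho$.

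To conclude that the induced map $\bar\Phi$ is an isomorphism, I would establish surjectivity and then match $\Lambda$-ranks. Surjectivity follows from smoothness: for any maximal cone $\sigma \in \Sigma^d$, the primitive vectors $n_{\rho_1},\dotsc ,n_{\rho_d}$ form a $\ZZ$-basis of $N$, so every monomial $x^n$ is a Laurent expression in the units $x^{n_{\rho_i}}$, each lying in the image of $\bar\Phi$. On the left, the specialization $q=1$ together with Theorem~\ref{Batyrev's-description-of-QH} gives $\dim_\KK QH_B^*(X_\Sigma, F; \KK) = \dim_\QQ H^*(X,\QQ) = \chi(X) = |\Sigma^d|$. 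On the right, Kouchnirenko's theorem applied to the Laurent polynomial $W_{F,\Sigma}$ with Newton polytope $\Delta^* = \ConH\{n_\rho\}_{\rho\in\Sigma^1}$ yields $\dim_\KK \KK[N]/J_{W_{F,\Sigma}} = d!\cdot\mathrm{Vol}(\Delta^*)$; decomposing $\Delta^*$ into cones over its facets --- each a basic lattice simplex of volume $1/d!$ by smoothness of $X$ --- gives $d!\cdot\mathrm{Vol}(\Delta^*) = |\Sigma^d|$. Hence both ranks agree and the surjection $\bar\Phi$ is an isomorphism.

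The main technical hurdle I anticipate is the appeal to Kouchnirenko's theorem, which nominally requires non-degeneracy of $W_{F,\Sigma}$ along the boundary strata of its Newton polytope --- a condition that may fail for non-generic $F$. This can be circumvented by constructing the inverse explicitly: fix a maximal cone $\sigma \in \Sigma^d$ and define $\Psi: \Lambda[N] \to QH_B^*(X_\Sigma, F; \Lambda)$ by $\Psi(x^{n_{\rho_i}}) := q^{-1} s^{-F(n_{\rho_i})} z_{\rho_i}$ for the basis rays of $\sigma$, and then verify that the choice of $\sigma$ is immaterial modulo the quantum Stanley--Reisner relations, and that $\Psi$ kills $J_{W_{F,\Sigma}}$ by virtue of the linear relations $P(X_\Sigma)$. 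This approach requires each $z_\rho$ to be a unit in $QH_B^*(X_\Sigma, F; \Lambda)$, which one extracts from a quantum Stanley--Reisner identity pairing $\rho$ with a complementary primitive collection, together with the unitality of $x^{n_\rho}$ in $\Lambda[N]$.
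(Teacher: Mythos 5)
Your proof is essentially the paper's proof: the same substitution $z_\rho \mapsto q\,s^{F(n_\rho)}x^{n_\rho}$, the same verification that it kills $P(X_\Sigma)$ and $QSR(X_\Sigma,F)$, the same surjectivity via smoothness, and the same rank comparison $\rank_\Lambda QH_B^*=\chi(X)=|\Sigma^d|$ on one side and $d!\,\mathrm{Vol}(\Delta^*)=|\Sigma^d|$ on the other.

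The place where you diverge — your worry about Kouchnirenko's theorem requiring Newton non-degeneracy, and the sketched workaround via an explicit inverse $\Psi$ — is where you should look more carefully, because the worry is unfounded and the workaround is shaky. The non-degeneracy of $W_{F,\Sigma}=\sum s^{F(n_\rho)}x^{n_\rho}$ along proper faces of $\Delta^*$ holds \emph{automatically} when $X_\Sigma$ is smooth: each face of $\Delta^*$ is a face of some basic simplex $\ConH\{n_\rho\}_{\rho\subset\sigma}$ (smoothness), so the monomials appearing in the restriction of $W$ to that face have linearly independent exponents and nonzero coefficients, and hence the restricted $\sum b_\rho x^{n_\rho}$ has no critical points in the torus. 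This is precisely what the paper's Lemma~\ref{lemma:jscheme} establishes, though phrased geometrically: compactifying the critical locus $Z_W$ inside the dual toric Fano $X^*$, the boundary components are $\PP^{d-1}$'s with homogeneous coordinates $y_\rho$ indexed by $\rho\subset\sigma$; the restricted log-derivatives $\sum_\rho(m,n_\rho)b_\rho y_\rho$ have no common zero because $(m_i,n_\rho)$ is an invertible integer matrix and all $b_\rho\neq 0$, so $Z_W$ avoids the boundary, is zero-dimensional, and by properness has degree $(-K_{X^*})^d=d!\,\mathrm{Vol}(\Delta^*)$. So the correct move is to \emph{verify} the non-degeneracy from smoothness, not to route around it.

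Your alternative via an explicit inverse $\Psi$ is in danger of circularity: you need each $z_\rho$ to be a unit in $QH_B^*(X_\Sigma,F;\Lambda)$ before $\Psi$ can be defined and shown well-posed, but the invertibility of $z_\rho$ is not visible directly from the Batyrev presentation; your appeal to ``a quantum Stanley--Reisner identity pairing $\rho$ with a complementary primitive collection'' is too vague — not every ray admits such a complementary collection, and even when one exists the resulting relation expresses a product of $z_\rho$'s in terms of \emph{another} product, not as a unit times $1$. As written, this would need substantially more argument, and is not needed once you notice the non-degeneracy comes for free.
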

For the proof of Proposition~\ref{prop:LGmodel} we shall need the following lemma.

\begin{lemma}\label{lemma:jscheme} Let $X=X_\Sigma$ be a smooth toric Fano variety over the base field $\KK$,
$F$ be a piecewise linear strictly convex function on $\Sigma$,
and $W=W_{F,\Sigma}$ be the corresponding Landau-Ginzburg
superpotential, or more generally, a section $\sum_{\rho\in\Sigma^1}b_\rho x^{n_\rho}$ of the anticanonical bundle on $X^*$ with all $b_\rho\ne 0$. Let $Z_W\subset X^*=X_\Sigma^*$ be the subscheme defined by the ideal
sheaf $\CJ_W\subset\CO_{X^*}$, where $\CJ_W(-K_{X^*})\subset\CO_{X^*}(-K_{X^*})$ is generated by all log-derivatives of
$W$. Then $Z_W$ is a projective subsheme of the big orbit $T_M\subset X^*$ of degree $|\Sigma^d|$. In particular it is zero dimensional, $\CO(Z_W)=\KK[N]/J_W$, and $\dim \CO(Z_W)=|\Sigma^d|$.
\end{lemma}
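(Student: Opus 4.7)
The plan is to show $Z_W$ lies entirely inside the big torus $T_M\subset X^*$; once this is established, zero-dimensionality and the identification $\CO(Z_W)=\KK[N]/J_W$ are immediate, and the degree is a B\'ezout-plus-Kushnirenko calculation. For any $m\in M$, the log-derivative
$$\partial_m W=\sum_{\rho\in\Sigma^1}(m,n_\rho)\,b_\rho\,x^{n_\rho}$$
lies in $H^0(X^*,\CO_{X^*}(-K_{X^*}))=\Span\{x^n\mid n\in\Delta^*\cap N\}$ by Remark~\ref{rem-on-sections}, and under the canonical trivialization of $\CO_{X^*}(-K_{X^*})$ on $T_M$ it becomes a regular function on $T_M$; the totality of these, as $m$ runs over $M$, generates the ideal $J_W\subset\KK[N]$. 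Hence $Z_W\cap T_M=\Spec(\KK[N]/J_W)$.

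The crucial step is to prove that $Z_W$ is disjoint from every $T_M$-invariant prime divisor $D^*_\tau\subset X^*$. A ray $\tau\in\Sigma^{*1}$ corresponds (via the duality $\Delta\leftrightarrow\Delta^*$) to a vertex of $\Delta$, equivalently to a maximal cone $\sigma\in\Sigma^d$ with rays $\rho_1,\dotsc,\rho_d$; smoothness of $X$ makes $\{n_{\rho_j}\}_{j=1}^d$ a $\ZZ$-basis of $N$, and the dual facet of $\Delta^*$ is the basic simplex $\ConH(n_{\rho_1},\dotsc,n_{\rho_d})$. As recalled in the excerpt, $D^*_\tau\cong\PP^{d-1}$ and $\CO_{X^*}(-K_{X^*})|_{D^*_\tau}\cong\CO_{\PP^{d-1}}(1)$, with $H^0$ spanned by $\{x^{n_{\rho_j}}|_{D^*_\tau}\}_{j=1}^d$. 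Fixing a basis $m_1,\dotsc,m_d$ of $M$, one reads
$$\partial_{m_i}W|_{D^*_\tau}=\sum_{j=1}^d(m_i,n_{\rho_j})\,b_{\rho_j}\,x^{n_{\rho_j}}|_{D^*_\tau},$$
and the $d\times d$ coefficient matrix has determinant $\pm\prod_j b_{\rho_j}\ne 0$, since the pairing matrix $((m_i,n_{\rho_j}))$ is unimodular and each $b_\rho\ne 0$. Thus these $d$ sections form a basis of $H^0(\PP^{d-1},\CO(1))$, which is base-point free, so $Z_W\cap D^*_\tau=\emptyset$. Combined with the first step, $Z_W\subset T_M$ and $\CO(Z_W)=\KK[N]/J_W$.

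Finally, $Z_W$ is closed in the projective variety $X^*$ and contained in the affine open $T_M$, so it has no irreducible component of positive dimension, and $\dim Z_W=0$. Being a proper intersection inside $T_M$ of the $d$ divisors in $|{-}K_{X^*}|$ cut out by $\partial_{m_1}W,\dotsc,\partial_{m_d}W$, it is a local complete intersection, so B\'ezout yields $\dim_\KK\CO(Z_W)=(-K_{X^*})^d$. By Kushnirenko's formula this equals $d!\cdot\mathrm{Vol}(\Delta^*)$; since each facet of $\Delta^*$ is a basic simplex, the simplex joining it to $0$ has Euclidean volume $1/d!$ and these simplices tile $\Delta^*$, whence $\mathrm{Vol}(\Delta^*)=|\Sigma^d|/d!$ (facets of $\Delta^*$ biject with maximal cones of $\Sigma$). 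The main obstacle is the boundary-avoidance step: smoothness of $X$ is indispensable there, furnishing the lattice basis that forces the restricted log-derivatives to span $H^0(\PP^{d-1},\CO(1))$.
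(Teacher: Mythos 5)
Your proof is correct and follows essentially the same route as the paper's: restrict the log-derivatives to each boundary component $D^*_\sigma\cong\PP^{d-1}$, use smoothness of $X$ (via the unimodularity of the pairing matrix $((m_i,n_{\rho_j}))$) to show the restricted sections span $H^0(\PP^{d-1},\CO(1))$ and hence have no common zero, conclude $Z_W\subset T_M$ is a finite projective scheme, and then compute $\deg Z_W=(-K_{X^*})^d=|\Sigma^d|$ via Kushnirenko and the tiling of $\Delta^*$ by basic simplices over its facets. You simply make explicit the "clearly no common roots" step and the Bézout-type identification of the degree with $(-K_{X^*})^d$, both of which the paper leaves to the reader.
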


\begin{proof}[{\bf Proof of Lemma~\ref{lemma:jscheme}}]
Since $X_\Sigma$ is smooth each irreducible component of
$X_\Sigma^*\setminus T_M$ is isomorphic to $\PP^{d-1}$. Recall
that such components are in one-to-one correspondence with the
rays of the dual fan $\Sigma^*$, or equivalently with the maximal
cones in $\Sigma$. Furthermore, if $\sigma\in\Sigma^d$ is a cone
and $D^*_\sigma\simeq\PP^{d-1}$ is the corresponding component
then the restriction of
the anticanonical linear system to such a component
$\CO_{X^*}(-K_{X^*})\otimes\CO_{D^*_\sigma}$ is isomorphic
to ${\cal O}_{\PP^{d-1}}(1)$, and the homogeneous
coordinates on $D^*_\sigma$ are naturally parameterized by the
rays $\rho\subset\sigma$. We denote these coordinates by $y_\rho$.

We consider $W$ and its log-derivatives as sections of
$\CO_{X^*}(-K_{X^*})$. Then,
$\partial_mW=\sum_{\rho\in\Sigma^1}(m,n_\rho)b_\rho x^{n_\rho}$
and its restriction to  $D^*_\sigma$ is given by
$\sum_{\rho\subset\sigma}(m,n_\rho)b_\rho y_\rho$. Clearly the set of
these equations for $m\in M$ has no common roots, hence
$Z_W\subset T_M$. But $Z_W\subset X_\Sigma^*$ is closed, hence a
projective scheme. Thus $Z_W$ is zero dimensional.

By definition $Z_W$ is the scheme-theoretic intersection of $d$
sections of $\CO_{X^*}(-K_{X^*})$, hence by  Kushnirenko's theorem
$$\deg Z_W=(-K_{X^*})^d=d!Volume(\Delta^*)=d!\sum_{\sigma\in\Sigma^d}Volume(\Delta^*\cap\sigma)=|\Sigma^d|,$$
since $\Delta^*\cap\sigma$ is a primitive simplex for any
$\sigma\in\Sigma^d$.
\end{proof}

\begin{proof}[{\bf Proof of Proposition~\ref{prop:LGmodel}.}]
Consider the natural homomorphism
\begin{equation} \label{from-Batyrev-to-LG} \psi\colon \Lambda[z_\rho]_{\rho\in\Sigma^1}\to \Lambda[N], \ \ {\rm
defined \ by} \ \psi(z_\rho)=qs^{F(n_\rho)}x^{n_\rho}.
\end{equation} Since $X_\Sigma$ is smooth and projective (hence
complete) the fan $\Sigma$ is complete, and any $n\in N$ is an
integral linear combination of vectors $n_\rho$, $\rho\in\Sigma^1$.
Thus, $\psi$ is surjective.

%Let us show now that $QSR(X_\Sigma,F)\subset\Ker\psi$.
Next we claim that the quantum Stanley-Reisner ideal
$QSR(X_\Sigma,F)$ lies in the kernel of $\psi$. Indeed, let $C$ be
a primitive collection and let $$\prod_{\rho\in
C}q^{-1}s^{-F(n_\rho)}z_\rho-\prod_{\rho\subseteq\sigma_C}(q^{-1}s^{-F(n_\rho)}z_\rho)^{a_\rho},$$
be the corresponding quantum generator. It follows from the
definition of $\psi$ that:
$$\psi\Bigl(\prod_{\rho\in C}q^{-1}s^{-F(n_\rho)}z_\rho-\prod_{\rho\subseteq\sigma_C}
(q^{-1}s^{-F(n_\rho)}z_\rho)^{a_\rho}\Bigr)=x^{\sum_{\rho\in
C}n_\rho}-x^{\sum_{\rho\subseteq\sigma_C}a_\rho n_\rho}=0.$$
Moreover, $\psi$ sends the ideal $P(X_\Sigma)$ into $J_{W_{F,\Sigma}}$. Indeed,
let $\sum_{\rho\in\Sigma^1}(m,n_\rho)z_\rho, \ m\in M$ be a
generator of $P(X_\Sigma)$. Then:
$$\psi(\sum_{\rho\in\Sigma^1}(m,n_\rho)z_\rho)=q\sum_{\rho\in\Sigma^1}(m,n_\rho)s^{F(n_\rho)}x^{n_\rho}=q\partial
log_mW_{F,\Sigma}\in J_{W_{F,\Sigma}}.$$ Thus, $\psi$ defines a {\it surjective} homomorphism
$QH_B^*(X_\Sigma,F;\Lambda)\to \Lambda[N]/J_{W_{F,\Sigma}}$.

Notice that both algebras $QH_B^*(X_\Sigma,F;\Lambda)$ and
$\Lambda[N]/J_{W_{F,\Sigma}}$ are {\it free} modules over $\Lambda$, and thus
to complete the proof all we need to do is to compare the ranks.
On one side:
$$\rank_{\Lambda}QH_B^*(X_\Sigma,F;\Lambda)=\dim_\KK H^*(X_\Sigma,\KK)=\chi(X_\Sigma)=|\Sigma^d|.$$
On the other side the rank of $\Lambda[N]/J_{W_{F,\Sigma}}$ over $\Lambda$ is
equal to $\dim_\KK\KK[N]/J_{W_{F,\Sigma}}$, which by Lemma~\ref{lemma:jscheme}
equals $|\Sigma^d|$. The proof is now complete.
\end{proof}

\begin{lemma}\label{lemma:rednondeg}
Let $X$, $X^*$, and $W=\sum_{\rho\in\Sigma^1}b_\rho x^{n_\rho}$ be as in Lemma \ref{lemma:jscheme}. Then the support of $Z_W$ coincides with the set of critical points of the function $\sum_{\rho\in\Sigma^1}b_\rho x^{n_\rho}$ on the torus $T_M$. Furthermore, a critical point $p$ is non-degenerate if and only if the scheme $Z_W$ is reduced at $p$.
\end{lemma}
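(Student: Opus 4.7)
The plan is to work in local logarithmic coordinates on the torus $T_M=\Spec\,\KK[N]$. Fix a basis $n_1,\dots,n_d$ of $N$ with dual basis $m_1,\dots,m_d$ of $M$, and write $y_j := x^{n_j}$ and $\partial_i := \partial_{m_i} = y_i\,\partial/\partial y_i$. Since $J_W$ is generated by $\partial_1 W,\dots,\partial_d W$ and each $y_i$ is invertible on $T_M$, this ideal coincides with $(\partial W/\partial y_1,\dots,\partial W/\partial y_d)$ as an ideal in $\CO_{T_M}$. It follows that $p\in\supp(Z_W)$ if and only if $(\partial W/\partial y_i)(p)=0$ for every $i$, i.e., if and only if $p$ is a critical point of $W$; this will settle the first assertion.

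For the second assertion, fix a critical point $p$ and let $\fm_p=(y_1-p_1,\dots,y_d-p_d)$ be the maximal ideal of the regular local ring $\CO_{T_M,p}$. Since $Z_W$ is zero-dimensional by Lemma~\ref{lemma:jscheme}, it will be reduced at $p$ if and only if $\CO_{Z_W,p}$ is a field, equivalently if and only if $J_W\cdot\CO_{T_M,p}=\fm_p$. Each $\partial_i W$ lies in $\fm_p$ since $p$ is a critical point, so by Nakayama's lemma the preceding equality is equivalent to the images $\overline{\partial_1 W},\dots,\overline{\partial_d W}$ spanning the cotangent space $\fm_p/\fm_p^2$. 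A Taylor expansion, together with $(\partial W/\partial y_j)(p)=0$, will yield
\begin{equation*}
\partial_i W \equiv \sum_{j=1}^{d}\frac{(\partial_i\partial_j W)(p)}{p_j}\,(y_j-p_j)\pmod{\fm_p^2},
\end{equation*}
so the spanning condition becomes non-singularity of the logarithmic Hessian matrix $\bigl((\partial_i\partial_j W)(p)\bigr)_{i,j}$.

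To finish, I will identify this with the usual non-degeneracy of the critical point: a direct calculation using $\partial_i = y_i\,\partial/\partial y_i$ shows that at a critical point the logarithmic and ordinary Hessians are related by conjugation with the diagonal matrix $\mathrm{diag}(p_1,\dots,p_d)$, whose entries are in $\KK^*$; consequently the two Hessian matrices are simultaneously non-singular. The main obstacle is merely bookkeeping --- keeping straight the identifications between log-derivatives, ordinary derivatives, and the corresponding Hessians; once these are in place, the proof reduces to an application of Nakayama's lemma. Note that the conclusion is visibly independent of the chosen basis of $M$, since at a critical point the Hessian is well defined up to conjugation.
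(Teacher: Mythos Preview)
Your argument is correct and follows essentially the same route as the paper: identify the support of $Z_W$ with the vanishing locus of the (log-)partial derivatives, and then use Nakayama's lemma to show that $J_W\CO_{T_M,p}=\fm_p$ is equivalent to the images of the $\partial_iW$ spanning $\fm_p/\fm_p^2$, i.e., to non-degeneracy of the Hessian. One small terminological quibble: at a critical point the logarithmic and ordinary Hessians satisfy $\bigl((\partial_i\partial_jW)(p)\bigr)=D\,H\,D$ with $D=\mathrm{diag}(p_1,\dots,p_d)$, which is a congruence rather than a conjugation---but this does not affect your conclusion that the two matrices are simultaneously non-singular.
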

\begin{proof}
We already proved in Lemma \ref{lemma:jscheme} that $Z_W$ is a zero-dimensional subscheme of the torus $T_M$. Thus $p\in Z_W\subset T_M$ if and only if all log-derivatives of $W$ vanish at $p$ if and only if $p$ is a critical point of $W$. Notice that $p$ is a non-degenerate critical point of $W$ if and only if the Hessian is non-degenerate at $p$, or equivalently, if and only if the differentials of the log-derivatives of $W$ generate the cotangent space $T^*_pT_M$. It remains to show that the latter condition is equivalent to the following: the log-derivatives of $W$ generate the maximal ideal of $p\in T_M$ locally, i.e. $\fm_p=J_{W,p}=J_W\CO_{T_M,p}$, where $\fm_p\subset \CO_{T_M,p}$ denotes the maximal ideal. Clearly if $\fm_p=J_{W,p}$ then the differentials of the log-derivatives generate\footnote{Recall that if $f\in \CO_{T_M,p}$ then $d_pf$ is nothing but the class of $f-f(p)$ modulo $\fm_p^2$.} $T^*_pT_M=\fm_p/\fm_p^2$. To prove the opposite direction we will need Nakayama's lemma. Indeed, if the differentials of the log-derivatives of $W$ generate $T^*_pT_M$ then $\fm_p=J_{W,p}+\fm_p^2$, thus $\fm_p\cdot (\fm_p/J_{W,p})=\fm_p/J_{W,p}$, hence, by Nakayama's lemma, $\fm_p/J_{W,p}=0$, or equivalently $\fm_p=J_{W,p}$.
\end{proof}
\begin{corollary}\label{cor:jschemeGen}
For $X=X_\Sigma$, $X^*$, $W=\sum_{\rho\in\Sigma^1}b_\rho x^{n_\rho}$, and $Z_W\subset X^*=X_\Sigma^*$ as in the lemma the following hold:
\begin{description}
       \item(i) $\CO(Z_W)$ is semisimple  if and only if $W$ has only non-degenerate critical points.
       \item(ii) $\CO(Z_W)$ contains a field as a direct summand if and only if $W$ has a non-degenerate critical point.
       \end{description}
\end{corollary}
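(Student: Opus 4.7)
The plan is to reduce both assertions to the pointwise structure of the zero-dimensional scheme $Z_W$ and then invoke Lemma \ref{lemma:rednondeg}. First I would recall, from Lemma \ref{lemma:jscheme}, that $Z_W$ is a zero-dimensional closed subscheme of $T_M$, and from Corollary \ref{cor:aboutval}(i) that this gives the canonical decomposition
$$\CO(Z_W)=\bigoplus_{p\in Z_W}\CO_{Z_W,p}$$
into local Artinian $\KK$-algebras. Since $\KK$ is algebraically closed (Remark \ref{rem:normOnKandQH}) the residue field at every closed point is $\KK$ itself, so each local factor $\CO_{Z_W,p}$ is a field if and only if it equals $\KK$, if and only if it is reduced.

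For part (i), the plan is to use the definition of semisimplicity as the absence of nilpotent elements (Definition \ref{def-semisimple}): the direct sum has no nilpotents precisely when each summand $\CO_{Z_W,p}$ has no nilpotents, equivalently is reduced, equivalently is a field. Lemma \ref{lemma:rednondeg} identifies this reducedness at $p$ with non-degeneracy of $p$ as a critical point of $W$. Combining these equivalences finishes (i).

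For part (ii), I would appeal to the geometric reformulation given just after Definition \ref{def-semisimple}: $\CO(Z_W)$ contains a field as a direct summand if and only if $\Spec \CO(Z_W)=Z_W$ contains a regular point as an irreducible component. Because $Z_W$ is zero-dimensional, its irreducible components are its closed points $p$, and such a component is regular precisely when $\CO_{Z_W,p}$ is a field, i.e.\ reduced. Again by Lemma \ref{lemma:rednondeg}, this is equivalent to $p$ being a non-degenerate critical point of $W$, so existence of such a direct summand is equivalent to existence of a non-degenerate critical point.

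There is no serious obstacle here: all the real work was done in Lemmas \ref{lemma:jscheme} and \ref{lemma:rednondeg}. The only delicate point to watch is that the notion ``contains a field as a direct summand'' is used in the sense of a $\KK$-algebra splitting, and it must be matched correctly with the regularity of an irreducible component of $Z_W$; algebraic closure of $\KK$ makes this match clean, since every such field factor is forced to equal $\KK\cdot e_p$ for the idempotent $e_p$ corresponding to a single reduced point.
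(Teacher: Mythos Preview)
Your proposal is correct and is exactly the argument the paper has in mind: the corollary is stated without proof immediately after Lemma~\ref{lemma:rednondeg}, and your reduction via the local decomposition $\CO(Z_W)=\bigoplus_p\CO_{Z_W,p}$ together with the equivalence ``reduced at $p$ $\Leftrightarrow$ $p$ non-degenerate'' from that lemma is precisely the intended justification. The only remark is that algebraic closedness of $\KK$ is not actually needed here---for a finite local $\KK$-algebra, reduced already implies field regardless---but invoking it does no harm.
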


\section{Proof of The Main Results} \label{prf-main-results}

In this section we prove our main results. We start with
Theorem~\ref{thmin-generic} which follows from the quantum
Poincar\'e duality described in Subsection~\ref{qh-sym-def} and the
following theorem:

\begin{theorem} \label{thm-generic} Let $X_\Sigma$ be a smooth toric Fano variety.
Then for a generic choice of a toric symplectic form $\omega$ on
$X_\Sigma$ the quantum cohomology $QH^0(X_\Sigma, \omega)$ is
semisimple.
\end{theorem}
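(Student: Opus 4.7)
The plan is to reduce the statement to a genericity question about non-degenerate critical points of the Landau--Ginzburg superpotential, and then to pass from a genericity statement over $\CCC$ to a genericity statement in the values $F(n_\rho) \in \RR$ via a valuation argument in the Novikov field.

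\textbf{Step 1 (Reduction to non-degeneracy).} By Proposition~\ref{prop:LGmodel}, for any toric symplectic form $\omega$ corresponding to a piecewise linear strictly convex function $F$ on $\Sigma$, one has $QH^0(X_\Sigma,\omega)\cong \KK[N]/J_{W_{F,\Sigma}}=\CO(Z_{W_{F,\Sigma}})$. By Corollary~\ref{cor:jschemeGen}(i) this algebra is semisimple if and only if every critical point of $W_{F,\Sigma}$ on $T_M=\Spec\KK[N]$ is non-degenerate. It therefore suffices to prove that non-degeneracy holds for generic $F$.

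\textbf{Step 2 (A universal discriminant over $\CCC$).} Consider the algebraic family of anti-canonical sections $W_b:=\sum_{\rho\in\Sigma^1}b_\rho x^{n_\rho}$ parameterized by $b=(b_\rho)\in U:=(\CCC^*)^{|\Sigma^1|}$, and the relative critical scheme $\widetilde Z\subset U\times T_M(\CCC)$. By Lemma~\ref{lemma:jscheme}, the projection $\widetilde Z\to U$ is finite of constant fiber degree $|\Sigma^d|$. Let $\Delta_\CCC\subset U$ be the closed locus where the fiber $Z_{W_b}$ fails to be reduced, equivalently (by Lemma~\ref{lemma:rednondeg}) where $W_b$ has a degenerate critical point. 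The goal of this step is to show that $\Delta_\CCC$ is a \emph{proper} closed subvariety, so that it is cut out by some nonzero polynomial $P\in\CCC[b_\rho^{\pm 1}]$.

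\textbf{Step 3 (Properness of the discriminant).} This is the main obstacle. The log-derivatives $\partial_m W_b=\sum_\rho(m,n_\rho)b_\rho x^{n_\rho}$ for $m$ running through a basis of $M$ are $d$ sections of $\CO_{X^*}(-K_{X^*})$ that vary linearly in $b$, and as $b$ varies they sweep out a base-point-free linear subsystem inside the space of such sections (since the $n_\rho$ span $N_\RR$). A Bertini-type argument then shows that for $b$ in a Zariski-open dense subset of $U$ the intersection $Z_{W_b}$ is smooth, hence reduced. Equivalently, one can exhibit a single $b^{(0)}\in U$ for which a direct computation shows that $W_{b^{(0)}}$ has $|\Sigma^d|$ distinct non-degenerate critical points (this is in the spirit of the large-volume limit, and is how one verifies the statement in standard examples such as $\CCC P^d$); either route suffices to conclude $\Delta_\CCC\subsetneq U$, and hence to produce the nonzero defining polynomial $P$.

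\textbf{Step 4 (Specialization to Novikov coefficients).} Write $P=\sum_\alpha c_\alpha b^\alpha$ with finitely many nonzero complex coefficients $c_\alpha$. For a toric symplectic form $\omega$ corresponding to $F$ the parameter values are $b_\rho=s^{F(n_\rho)}\in\KK^*$, so
\[
P\bigl(s^{F(n_\rho)}\bigr) \;=\; \sum_\alpha c_\alpha\,s^{\sum_\rho \alpha_\rho F(n_\rho)}\in\KK.
\]
As $\alpha$ runs over the finite support of $P$, the real numbers $\sum_\rho\alpha_\rho F(n_\rho)$ are pairwise distinct outside a finite union of rational hyperplanes in the parameter space $(F(n_\rho))_\rho$. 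Outside this bad locus the displayed element of $\KK$ has a well-defined leading term with nonzero coefficient, hence is nonzero. The bad locus is closed of codimension $\ge 1$ in the open cone of toric symplectic forms, so its complement is open and dense. For any $F$ in this complement, $W_{F,\Sigma}$ has no degenerate critical points, and by Step~1 the algebra $QH^0(X_\Sigma,\omega)$ is semisimple. This completes the proof.
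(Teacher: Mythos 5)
Your proposal follows the paper's proof essentially step for step: reduce via Proposition~\ref{prop:LGmodel} and Corollary~\ref{cor:jschemeGen} to non-degeneracy of the critical points of $W$, produce a nonzero discriminant polynomial $P\in\CCC[b_\rho]_{\rho\in\Sigma^1}$ cutting out the bad locus in the complex parameter space, and then specialize $b_\rho\mapsto s^{F(n_\rho)}$, observing that for a generic piecewise linear $F$ the exponents $\sum_\rho\alpha_\rho F(n_\rho)$ over the support of $P$ are pairwise distinct, hence $P(s^{F(n_\rho)})\ne 0$. The only point worth flagging is Step~3: the paper justifies the existence of $P$ by noting that the differentials of global anti-canonical sections on $X^*$ span the cotangent space at each point of $T_M$ (and that restricting from the full linear system to $\Span\{x^{n_\rho}\}$ loses nothing since the log-derivatives of the missing generator $x^0$ vanish), which is the cleaned-up version of your ``Bertini-type'' sketch, whereas your alternative route of exhibiting an explicit $b^{(0)}$ with all critical points non-degenerate is neither carried out nor evidently available for an arbitrary smooth toric Fano $X_\Sigma$, so it should not be relied upon.
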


The proof follows the arguments in~\cite{I} Corollary 5.12,
and~\cite{FOOO} Proposition 7.6.

\begin{proof}[{\bf Proof of Theorem~\ref{thm-generic}}]

 Let $X^*=X_\Sigma^*$ be the dual Fano toric variety and let $\CO_{X^*}(-K_{X^*})$
  be the anti-canonical linear system. Following Remark~\ref{rem-on-sections} we consider the subspace of
  sections $\Span\{x^{n_\rho}\}_{\rho\in\Sigma^1}\subset H^0(X^*, \CO_{X^*}(-K_{X^*}))$.
  It has codimension one since $X_\Sigma$ is Fano and smooth,
  moreover $H^0(X^*, \CO_{X^*}(-K_{X^*}))$ is generated by $\Span\{x^{n_\rho}\}$
  and the section $x^0$.

Consider a strictly convex piecewise linear function $F$ and the
associated potential
$W_{F,\Sigma}=\sum_{\rho\in\Sigma^1}s^{F(n_\rho)}x^{n_\rho}$. Let
$Z_{W_{F,\Sigma}}$ be the subscheme of $X^*$ defined by the log-derivatives of
$W_{F,\Sigma}$ as in Lemma \ref{lemma:jscheme}. Then $QH^0(X_\Sigma,
\omega)$ is semisimple if and only if the scheme $Z_{W_{F,\Sigma}}$ is
reduced by Corollary \ref{cor:jschemeGen} and Proposition
\ref{prop:LGmodel}.
%Notice that we can associate the scheme $Z_W$
%to any section $W\in H^0(X^*, \CO_{X^*}(-K_{X^*}))$, not
%necessarily $W=W_F$.

Recall that $\CO_{X^*}(-K_{X^*})$ is ample, furthermore it is easy
to see that for any $p\in T_M\subset X^*$ the differentials of the
global sections of $\CO_{X^*}(-K_{X^*})$ generate the cotangent
space at $p$. Thus for a general choice of $W\in H^0(X^*,
\CO_{X^*}(-K_{X^*}))$ the critical points of $W$ are
non-degenerate, hence $Z_W$ is reduced by Lemma
\ref{lemma:rednondeg}. Moreover the same is true for a general
section $W\in \Span\{x^{n_\rho}\}$ since log-derivatives of $x^0$
are zeroes. Thus there exists a non-zero polynomial $P\in
\CCC[B_\rho]_{\rho\in\Sigma^1}$ such that $Z_W$ is reduced for any
$W=\sum b_\rho x^{n_\rho}$ with $P(b_\rho)\ne 0$.

Let now $\omega$ be any toric symplectic form on $X_\Sigma$, and
let $F$ be a corresponding piecewise linear function on
$\Sigma$. Notice that by varying $\omega$ we vary $F(n_\rho)$, and
any simultaneous small variation of $F(n_\rho)$ is realized by a
toric symplectic form. Indeed, the fan $\Sigma$ is simplicial thus
any simultaneous variation of $F(n_\rho)$ is realized by a
piecewise linear function, and since $F$ is strictly convex any
small variation gets rise to a strictly convex function. Thus for
a general variation $\omega'$ of $\omega$ all the monomials of $P$
will have different degrees in $s$, hence $P(s^{F'(n_\rho)})\ne 0$, and we are done.
\end{proof}
By a similar argument one can prove the following lemma:
\begin{lemma}\label{lem:ratreal}
Let $X=X_\Sigma$ be a (smooth) toric Fano variety, and let $X^*$ be the dual toric Fano variety over the field $\KK$. Let $V\subset H^0(X^*,\CO_{X^*}(-K^*))$ be a locally closed subvariety defined over $\CCC$. Assume that $\sum s^{F(n_\rho)}x^{n_\rho}\in V$ for some strictly convex piecewise linear function $F$ on the fan $\Sigma$. Then there exists a rational strictly convex piecewise linear function $F'$ on the fan $\Sigma$ such that $\sum s^{F'(n_\rho)}x^{n_\rho}\in V$.
\end{lemma}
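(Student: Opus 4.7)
The plan is to adapt the perturbation argument of Theorem~\ref{thm-generic}, refining the choice so that the rational $F'$ preserves not only the non-vanishing but also the vanishing conditions defining $V$. The crucial observation is that whether $W_F:=\sum_\rho s^{F(n_\rho)}x^{n_\rho}\in V$ holds depends on $F$ only through the combinatorial partition $\tau(F)$ of a finite set of multi-indices, in which $a\sim_F a'$ iff $\sum_\rho a_\rho F(n_\rho)=\sum_\rho a'_\rho F(n_\rho)$.

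Concretely, since $V$ is locally closed and defined over $\CCC$, it is cut out in $H^0(X^*,\CO_{X^*}(-K^*))$ by finitely many polynomial equations $P_i(b)=0$ and inequations $Q_j(b)\ne 0$ with $\CCC$-coefficients in the natural coordinates $b_m$. Because $W_F$ is supported on the edge monomials $x^{n_\rho}$, we may set $b_m=0$ for $m\ne n_\rho$ and assume $P_i,Q_j\in\CCC[b_{n_\rho}]_{\rho\in\Sigma^1}$. Let $A\subset\ZZ_{\ge 0}^{\Sigma^1}$ denote the finite set of multi-indices appearing in any of these polynomials, with coefficients $\gamma_a^{(i)},\gamma_a^{(j)}\in\CCC$. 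Substituting $b_{n_\rho}=s^{F(n_\rho)}$ and grouping by $s$-exponent, the condition $W_F\in V$ becomes a finite list of equalities $\sum_{a\in G}\gamma_a^{(i)}=0$ and non-vanishing assertions $\sum_{a\in G}\gamma_a^{(j)}\ne 0$, where $G$ ranges over the classes of $\tau(F)$. All these conditions depend on $F$ only through $\tau(F)$.

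Let $U_F\subset\RR^{\Sigma^1}$ be the linear subspace cut out by $\sum_\rho(a_\rho-a'_\rho)f_\rho=0$ for each pair $a\sim_F a'$ in $A$; since $A\subset\ZZ^{\Sigma^1}$, $U_F$ is $\QQ$-rational and contains $F$. The locus $\{f\in U_F:\tau(f)=\tau(F)\}$, obtained from $U_F$ by removing the finitely many hyperplanes $\sum_\rho(a_\rho-a'_\rho)f_\rho=0$ for $a\not\sim_F a'$, is open in $U_F$ and contains $F$. Since $\Sigma$ is simplicial, every $f\in\RR^{\Sigma^1}$ determines a piecewise linear function on $\Sigma$, and strict convexity is an additional open condition containing $F$. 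Intersecting both open conditions with the $\QQ$-rational subspace $U_F$ yields a nonempty open subset, in which rational points are dense. Choose a rational, strictly convex, piecewise linear $F'\in U_F$ with $\tau(F')=\tau(F)$; then every group-sum entering the $s$-expansion of $P_i(W_{F'})$ or $Q_j(W_{F'})$ agrees with the corresponding group-sum for $W_F$, so all defining relations of $V$ persist at $W_{F'}$, giving $W_{F'}\in V$.

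The main obstacle is essentially bookkeeping: encoding the locally closed condition $W_F\in V$ as a combinatorial condition on the partition $\tau(F)$, and verifying that $U_F$ remains $\QQ$-rational so that the density of rationals can be invoked simultaneously with the open strict-convexity condition.
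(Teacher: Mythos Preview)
Your proof is correct and follows essentially the same approach as the paper's: encode the vanishing conditions $P_i(s^{F(n_\rho)})=0$ as a system of integral linear equations on the values $F(n_\rho)$, then perturb within this rational linear subspace to a nearby rational strictly convex $F'$, using simpliciality of $\Sigma$ and openness of strict convexity exactly as in Theorem~\ref{thm-generic}. Your partition formalism $\tau(F)$ simply makes explicit what the paper leaves to the phrase ``small perturbation'', namely that the open condition cutting $V$ out of $\overline{V}$ is also preserved.
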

\begin{proof}
The variety $\overline{V}$ is given by a system of polynomial equations $P_1(b_\rho)=...=P_k(b_\rho)=0$ for some $P_1,\dotsc ,P_k\in \CCC[B_\rho]_{\rho\in\Sigma^1}$, and $V\subseteq\overline{V}$ is open.

Consider a collection of real numbers $(F_\rho)_{\rho\in\Sigma^1}$. Then $P_i(s^{F_\rho})$ is a formal finite sum of (real) monomials with coefficients in $\CCC$. Assume now that $P_i(s^{F_\rho})=0$. Then there exists a system $L_i$ of {\it linear} equations with {\it integral} coefficients such that $(F_\rho)_{\rho\in\Sigma^1}$ is a solution of $L_i$, and $P_i(s^{F'_\rho})=0$ for {\it any} solution $(F'_\rho)_{\rho\in\Sigma^1}$ of the system $L_i$.

Since $\sum s^{F(n_\rho)}x^{n_\rho}\in V$ there exists a system $L=\cup L_i$ of linear equations with integral coefficients such that $(F(n_\rho))_{\rho\in\Sigma^1}$ is a solution of $L$ and for any solution $(F'_\rho)_{\rho\in\Sigma^1}$ the following holds: $\sum s^{F'_\rho}x^{n_\rho}\in\overline{V}$. Thus there exists a {\it rational} solution of system $L$ obtained from the given one by a small perturbation. Similarly to the proof of Theorem \ref{thm-generic}, any such solution is of the form $(F'(n_\rho))_{\rho\in\Sigma^1}$ where $F'$ is a rational strictly convex piecewise linear function on the fan $\Sigma$. Thus $P_i(s^{F'(n_\rho)})=0$ for all $i$, hence $\sum s^{F'(n_\rho)}x^{n_\rho}\in V$.
\end{proof}

Recall that when $X_\Sigma$ is Fano toric manifold then there exists
a distinguished toric symplectic form $\omega_0$ on $X_\Sigma$ with moment map $\mu_0$,
namely the symplectic form corresponding to $c_1(X_\Sigma)$, i.e. to
the piecewise linear function $F_0$ satisfying $F_0(n_\rho)=-1$ for
all $\rho\in\Sigma^1$. It is the unique symplectic form for which
the corresponding moment polytope is reflexive.

Using the quantum Poincar\'e duality once again,
Theorems~\ref{thmin:sms} and~\ref{thmin:dirsmd} follow from:

\begin{theorem} \label{main-thm} Let $X_\Sigma$ be a smooth toric
Fano manifold, and let $\omega$ be a toric symplectic
form on $X_\Sigma$. Then

(i) If $QH^0(X_\Sigma, \omega_0)$ is semisimple then
$QH^0(X_\Sigma, \omega)$ is semisimple.

(ii) If $QH^0(X_\Sigma, \omega_0)$ contains a field as a direct
summand then so is $QH^0(X_\Sigma, \omega)$.
\end{theorem}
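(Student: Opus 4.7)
The plan is to use Proposition~\ref{prop:LGmodel} and Corollary~\ref{cor:jschemeGen} to recast the theorem as a comparison of critical schemes of the Landau--Ginzburg superpotential $W_{F,\Sigma}=\sum_\rho s^{F(n_\rho)}x^{n_\rho}$ on $T_M$. Semi-simplicity of $QH^0(X_\Sigma,\omega)$ is equivalent to all critical points of $W_{F,\Sigma}$ being non-degenerate, while containing a field as a direct summand is equivalent to the existence of at least one non-degenerate critical point. Since the monotone potential $W_{F_0,\Sigma}=s^{-1}\sum_\rho x^{n_\rho}$ is just a scalar multiple of the $\CCC$-valued potential $W_0:=\sum_\rho x^{n_\rho}$, and such rescaling preserves both the critical scheme and the non-degeneracy of each critical point, the hypotheses of (i) and (ii) translate into the corresponding statements about $W_0$ over $\CCC$.

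Next I would try to lift each non-degenerate critical point $p_0\in T_M(\CCC)\subset T_M(\KK)$ of $W_0$ to a non-degenerate critical point of $W_{F,\Sigma}$ in $T_M(\KK)$ by a non-Archimedean Hensel-type argument. The completeness and algebraic closedness of $\KK$ (Remark~\ref{rem:normOnKandQH}) guarantee convergence of a Newton iteration once the setup is written as a small-perturbation problem, and non-degeneracy of $p_0$ makes the Hessian invertible so that the iteration is well defined. Part~(ii) then follows at once from the existence of a single such lift. For part~(i) one combines injectivity of the lift with the equality $\deg Z_{W_{F,\Sigma}}=\deg Z_{W_0}=|\Sigma^d|$ provided by Lemma~\ref{lemma:jscheme}: an injective non-degenerate lifting defined on all of $Z_{W_0}$ is forced to be a bijection, so \emph{every} critical point of $W_{F,\Sigma}$ is then non-degenerate.

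The main obstacle I anticipate is that the naive difference $W_{F,\Sigma}-W_{F_0,\Sigma}=\sum_\rho(s^{F(n_\rho)}-s^{-1})x^{n_\rho}$ is not small in the $\KK$-norm whenever some $F(n_\rho)<-1$, so Newton iteration around $p_0$ need not converge as written. The fix is to first conjugate by a torus automorphism $x\mapsto\tau\cdot x$ of $T_M(\KK)$, with $\tau\in T_M(\KK)$ chosen so that in the new coordinates $W_{F,\Sigma}$ becomes a genuine non-Archimedean deformation of $W_0$ near $p_0$. The choice of $\tau$ is dictated by the tropicalization $v\mapsto\min_\rho\{F(n_\rho)+(n_\rho,v)\}$ on $M_\RR$, whose vertices correspond to those of the moment polytope of $F$; at each such vertex the $d$ incident rays form a $\ZZ$-basis of $N$ by the Delzant smoothness condition, and this is precisely what lands the rescaled potential in the form required for Hensel's lemma. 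Organizing the rescalings vertex-by-vertex — one $\tau$ per critical point $p_0$, matched to the vertex of the moment polytope that captures the tropical location of the corresponding lift — is what I expect to be the main technical load of the argument.
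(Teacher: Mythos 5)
The high-level reduction to the Landau--Ginzburg model via Proposition~\ref{prop:LGmodel} and Corollary~\ref{cor:jschemeGen} matches the paper, but the lifting device you propose has a genuine gap. You want to run a non-Archimedean Newton/Hensel iteration in the $\KK$-norm starting from a critical point $p_0\in T_M(\CCC)$ of $W_0$, and you recognize that $W_{F,\Sigma}$ is \emph{not} a small $\KK$-norm perturbation of $W_{F_0,\Sigma}$. However, the proposed fix does not repair this. Conjugating by $\tau\in T_M(\KK)$ multiplies the coefficient of $x^{n_\rho}$ by $\tau^{n_\rho}$, so its $\KK$-valuation shifts by the \emph{linear} functional $\langle v, n_\rho\rangle$ where $v$ is the tropicalization of $\tau$. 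Since $F$ is strictly convex and not affine, no single $v$ can equalize all the exponents $F(n_\rho)+\langle v, n_\rho\rangle$: at best the $d$ rays through a vertex $m^*$ of $\Delta_F$ become dominant and the rest strictly subdominant. But the resulting ``leading potential'' $\sum_{i=1}^d c_i x^{n_{\rho_i}}$, with $\{n_{\rho_i}\}$ a $\ZZ$-basis of $N$, has no critical point on $T_M$ at all (its log-gradient is a nonvanishing vector of monomials), so there is no approximate zero from which to start the iteration. The further suggestion to pick $\tau$ ``matched to the tropical location of the corresponding lift'' is circular: that tropical location is exactly what you would need the argument to produce. In short, the $\KK$-norm (essentially the $s\to 0$ direction) is the wrong topology to see $W_F$ as a deformation of $W_0$.

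The paper's proof avoids this entirely by working in the $(s-1)$-adic direction, where the identification $W_F\big|_{s=1}=W_0\big|_{s=1}$ is \emph{exact} rather than approximate. Concretely, after reducing to rational and then integral $F$ (Lemma~\ref{lem:ratreal} plus the field automorphism $s^a\mapsto s^{ak}$), one forms $Y=\Spec\,\CCC[s^{\pm1}][N]/J_W$ and $Y_0$, notes that both have the same closed fiber $Y_c$ over $s=1$, and proves that $Y\to\Spec R$ is flat and finite for $R=\CCC[s^{\pm1}]_{(s-1)}$ (Claim~\ref{claim:flfin}). The transfer of semisimplicity and of the field-direct-summand property is then the elementary commutative-algebra Lemma~\ref{lemma:genred}: for a flat finite scheme over a DVR, a reduced closed fiber forces a reduced generic fiber, and a reduced point in the closed fiber yields a reduced point in the generic fiber. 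This replaces the problematic Hensel step with a specialization argument that is structurally sound; if you want to salvage a Hensel-flavored proof, it should be carried out $(s-1)$-adically over $R$ (or its completion $\CCC[[s-1]]$), not with the $\KK$-norm, and even then the flatness/finiteness input is essential to control degrees for part~(i).
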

\begin{proof}[{\bf Proof of Theorem~\ref{main-thm}:}]

Let $F$ and $F_0$ be the piecewise linear strictly convex functions corresponding
to $\omega$ and $\omega_0$, and let $W$ and $W_0$ be the
Landau-Ginzburg superpotentials assigned to $F$ and $F_0$. From Proposition~\ref{prop:LGmodel} it follows that
$QH^0(X_\Sigma, \omega)\cong \CO(Z_W)=\KK[N]/J_W$ and $QH^0(X_\Sigma, \omega_0)\cong \CO(Z_{W_0})=\KK[N]/J_{W_0}$, where $W=W_{F,\Sigma}$ and $W_0=W_{F_0,\Sigma}$.
Notice that the loci of sections $W'\in H^0(X_\Sigma^*, \CO(-K_{X_\Sigma^*}))$ for which $Z_{W'}$ is zero dimensional and is not reduced/does not contain a reduced point are locally closed and defined over $\CCC$. Thus, by Lemma \ref{lem:ratreal}, it is sufficient to prove the theorem only for {\it rational} symplectic forms $\omega$.
Furthermore, notice that $s^a\mapsto s^{ak}$ is an automorphism of the field $\KK$,
hence  without loss of generality we may assume that $\omega$ %and $\omega_\Sigma$
is integral. Thus $W,W_0\in\CCC[s^{\pm 1}][N]$.

Next, let $Y=\Spec \CCC[s^{\pm 1}][N]/J_W$ and $Y_0=\Spec
\CCC[s^{\pm 1}][N]/J_{W_0}$, and consider the %. Then $Y$ and $Y_0$ admit
natural projections to $\Spec \CCC[s^{\pm 1}]=\CCC^*$. Notice that
the fibers of $Y$ and $Y_0$ over $s=1$ are canonically isomorphic
since ${W}{\big |}_{s=1} = {W_0}{\big |}_{s=1}$. We denote these
fibers by $Y_c$ (``c" stands for closed). By Lemma
\ref{lemma:jscheme}
$$\dim_\CCC(\CO(Y_c)) = \dim_{\KK} \KK[N] / J_{W_0}
=|\Sigma^d|<\infty,$$ and $Y_0=\Spec \CCC[s^{\pm 1}]\times
Y_c$ since $W_0=s^{-1}\sum_{\rho\in\Sigma^1}x^{n_\rho}$ and $s$ is
invertible in $\CCC[s^{\pm 1}]$.

Consider now the algebras of functions $\CO((Y_0)_\eta)$ and
$\CO(Y_\eta)$ on the generic fibers of
$Y_0\to\Spec\CCC[s^{\pm 1}]$ and $Y\to\Spec\CCC[s^{\pm 1}]$,
i.e.
%$\CO((Y_\Sigma)_\eta)=\CO(Y_\Sigma)\otimes_{\CCC[s^{\pm
%1}]}\CCC(s)=\CO(Y_1)\otimes_\CCC\CCC(s)$ and
%$\CO(Y_\eta)=\CO(Y)\otimes_{\CCC[s^{\pm 1}]}\CCC(s)$.
$$\CO((Y_0)_\eta):=\CO(Y_0)\otimes_{\CCC[s^{\pm
1}]}\CCC(s)\simeq\CO(Y_c)\otimes_\CCC\CCC(s), \  {\rm and} \
\CO(Y_\eta):=\CO(Y)\otimes_{\CCC[s^{\pm 1}]}\CCC(s).$$ Notice that
$\dim_{\CCC(s)}(\CO(Y_\eta))=\dim_\CCC(\CO(Y_c))=|\Sigma^d|<\infty$ by
Lemma \ref{lemma:jscheme}. Notice also that $QH^0(X_\Sigma,
\omega_0)=\CO((Y_0)_\eta)\otimes_{\CCC(s)}\KK$, and
$QH^0(X_\Sigma, \omega)=\CO(Y_\eta)\otimes_{\CCC(s)}\KK$. Thus
$QH^0(X_\Sigma, \omega_0)$ is semisimple over $\KK$ (contains a
field as a direct summand) if and only if $\CO((Y_0)_\eta)$ is
semisimple over $\CCC(s)$ (contains a field as a direct summand)
if and only if $\CO(Y_c)$ is semisimple over $\CCC$ (contains a
field as a direct summand), and $QH^0(X_\Sigma, \omega)$ is
semisimple over $\KK$ (contains a field as a direct summand) if
and only if $\CO(Y_\eta)$ is semisimple over $\CCC(s)$ (contains
a field as a direct summand).

To summarize: all we want to prove is that (i) if $\CO(Y_c)$ is
semisimple over $\CCC$ then $\CO(Y_\eta)$ is semisimple over
$\CCC(s)$, and (ii) if $\CO(Y_c)$ contains a field as a direct
summand then $\CO(Y_\eta)$ contains a field as a direct summand;
or geometrically (i) if $Y_c$ is reduced then $Y_\eta$ is reduced,
and (ii) if $Y_c$ contains a reduced point the $Y_\eta$ contains a
reduced point.

We remark that since we are interested only in the fibers over the
generic point and over $s=1$, we can  replace $\CCC[s^{\pm 1}]$ by
its localization at $s=1$ denoted by $R$. By abuse of notation
$Y\times_{\Spec\CCC[s^{\pm 1}]}\Spec R$ will still be denoted by
$Y$. To complete the proof, we shall need the following two
observations:

\begin{claim}\label{claim:flfin} The map $Y\to\Spec R$ is flat and finite.
\end{claim}

\begin{lemma}\label{lemma:genred} Let $Y$ be flat finite scheme over $\Spec R$,
and let $Y_c$ and $Y_\eta$ be its fibers over the closed and
generic points of $\Spec R$. Then

(i) If $Y_c$ is reduced then $Y_\eta$ is reduced.

(ii) If $Y_c$ contains a reduced point then $Y_\eta$ contains a
reduced point.
\end{lemma}
\noindent The theorem now follows.
\end{proof}

\begin{proof}[\bf Proof of Claim \ref{claim:flfin}.] First let us show flatness.
It is sufficient to check flatness locally on $Y$. Let us fix a closed point $p\in Y\subset\Spec\, R[N]$, hence $p\in Y_c$. We denote $\Spec\, R[N]$ by $T$.
Let $m_1,\dotsc ,m_d$ be a basis of $M=\Hom_\ZZ(N,\ZZ)$ and let
$\partial_i$ be the log derivations defined by $m_i$. Then $J_W$
is generated by $\{\partial_iW\}_{i=1}^d$. We claim that the sequence $\partial_1W,\dotsc ,\partial_dW,s-1$ is a sequence of parameters in the maximal ideal $\fm_p\subset \CO_{T,p}$. Indeed $\dim_pT=d+1$ and $\dim \Spec (\CO_{T,p}/(\partial_1W,\dotsc ,\partial_dW,s-1))=0$, since $\dim_\CCC(\CO_{T,p}/(\partial_1W,\dotsc ,\partial_dW,s-1))=\dim_\CCC \CO_{Y_c,p}\le\dim_\CCC \CO(Y_c)<\infty$. Notice that $T$ is regular, hence Cohen Macaulay, thus $\partial_1W,\dotsc ,\partial_dW,s-1$ is an $\CO_{T,p}-$sequence by \cite{Mat} Theorem 17.4 (iii). Then the local algebra $\CO_{Y,p}=\CO_{T,p}/(\partial_1W,\dotsc ,\partial_dW)$ is Cohen-Macaulay by \cite{Mat} Theorem 17.3 (ii), and $\dim_pY=1$. Flatness at $p$ now follows from \cite{Mat} Theorem 23.1, indeed $Y$ is Cohen-Macaulay at $p$ of dimension $1$, $\Spec R$ is regular of dimension $1$, and the fiber over $s=1$ has dimension $0$.

Remark that flatness of $Y$ over $\Spec R$ implies (and in fact is equivalent to) the following equivalent properties: $s-1$ is not a zero divisor in $\CO(Y)$, and the natural map $\CO(Y)\to\CO(Y_\eta)$ is an embedding. In what follows we shall use these properties many times.

Next we turn to show that $\CO(Y)$  is finite $R$-module. Let
$g_1,\dotsc ,g_l$, $l=|\Sigma^d|$, be a basis of $\CO(Y_c)$ and let $f_1,\dotsc ,f_l$
be its lifting to $\CO(Y)\subset\CO(Y_\eta)$. We claim that
$f_1,\dotsc ,f_l$ freely generate $\CO(Y)$ as
an $R$-module.  Let $\lambda_i(s)\in \CCC(s)$ be elements such
that $0=\sum \lambda_i(s)f_i\in \CO(Y_\eta)$. If not
all $\lambda_i(s)$ are equal to zero, then there exists $k$ such
that $\mu_i(s)=(s-1)^k\lambda_i(s)\in R$ for all $i$ and $\mu_i(1)\ne 0$ for some $i$. Then
$\sum\mu_i(s)f_i=0$ hence $\sum \mu_i(1)g_i=0$ which
is a contradiction. Thus $f_1,\dotsc ,f_l\in
\CO(Y_\eta)$ are linearly independent, and since
$\dim_{\CCC(s)}\CO(Y_\eta)=\dim_\CCC \CO(Y_1)=|\Sigma^d|=l$, they form a
basis of $\CO(Y_\eta)$ over $\CCC(s)$. It remains to show that
$f_1,\dotsc ,f_l$ generate $\CO(Y)$ as
$R$-module. Let $0 \neq f\in \CO(Y)\subset \CO(Y_\eta)$ be
any element then $f=\sum\lambda_i(s)f_i$ for some
$\lambda_i(s)\in \CCC(s)$. As before, if not all the coefficients
$\lambda_i(s)\in R$ then there exists $k>0$ such that
$\mu_i(s)=(s-1)^k\lambda_i(s)\in R$ for all $i$ and $\mu_i(1)\ne
0$ for at least one $i$. Thus $\sum \mu_i(1)g_i\ne 0$ is the class
of $(s-1)^kf$ in $\CO(Y_c)$ which is zero. This is a
contradiction, hence $\CO(Y)$ is a flat finite $R$-module.
\end{proof}

\begin{proof}[\bf Proof of Lemma \ref{lemma:genred}.]
First, notice that the natural map
$\CO(Y)\to \CO(Y_\eta)$ is an embedding since $\CO(Y)$ is flat over $R$. Second, recall that a
flat finite module over a local ring is free, thus $\CO(Y)\simeq
R^l$ as an $R$-module, and $\CO(Y_\eta)\simeq \CCC(s)^l$ as a
$\CCC(s)$-module (vector space); hence for any $0\ne f\in
\CO(Y_\eta)$ there exists a {\it minimal} integer $k$ such that
$(s-1)^kf\in \CO(Y)$.

(i): Assume by contradiction that $Y_\eta$ is not reduced. Then
there exists a nilpotent element $0\ne f\in \CO(Y_\eta)$. Let
$k$ be the minimal integer such that $(s-1)^kf\in \CO(Y)$.
Then $0\ne (s-1)^kf$ is a nilpotent and its class in
$\CO(Y_c)$ is not zero. Thus, we constructed a non-zero nilpotent
in $\CO(Y_c)$, which is a
contradiction.

(ii): Recall that if $Z=\Spec A$, and $A$ is a finite dimensional
algebra over a field then $A=\CO(Z)=\oplus_{q\in Z}\CO_{Z,q}$ as
algebras, where $\CO_{Z,q}$ is the localization of $\CO(Z)$ at $q$
(Chinese remainder theorem). Furthermore any element in the
maximal ideal $\fm_{Z,q}\subset\CO_{Z,q}$ is nilpotent.
Thus $\CO(Y_c)=\oplus_{q\in Y_c}\CO_{Y_c,q}$ as algebras, and
$\CO(Y_\eta)=\oplus_{\epsilon\in Y_\eta}\CO_{Y_\eta,\epsilon}$ as
algebras (hence as $\CO(Y)$-modules).

 Assume that $q\in Y_c$ is a
reduced point. Then $q\in Y$ is a closed point and $\CO_{Y,q}\to \CO_{Y_c,q}=\CCC$  is a
surjective homomorphism from a local ring with kernel generated by
$s-1$, hence $\fm_{Y,q}=(s-1)\CO_{Y,q}$. Tensoring
$\CO(Y_\eta)=\oplus_{\epsilon\in Y_\eta}\CO_{Y_\eta,\epsilon}$
with $\CO_{Y,q}$ over $\CO(Y)$ we obtain the following
decomposition:
$\CO(Y_\eta)\otimes_{\CO(Y)}\CO_{Y,q}=\oplus_{\epsilon\in
Y_\eta}(\CO_{Y_\eta,\epsilon}\otimes_{\CO(Y)}\CO_{Y,q})$. To
finish the proof it is sufficient to show that (a)
$\CO(Y_\eta)\otimes_{\CO(Y)}\CO_{Y,q}$ is a field, and (b)
$\CO_{Y_\eta,\epsilon}\otimes_{\CO(Y)}\CO_{Y,q}$ is either zero or
$\CO_{Y_\eta,\epsilon}$.

For (a), notice that by Nakayma's lemma
$\cap_{k\in\NN}\fm_{Y,q}^k=0$. Thus, any element in $\CO_{Y,q}$ is
of the form $u(s-1)^k$ for some integer $k\ge 0$ and some
invertible element $u\in\CO_{Y,q}$. Next, note that
$s-1\in\CO_{Y,q}$ is not a nilpotent element since otherwise it
would be a zero divisor in $\CO(Y)$, and this contradicts the
flatness of $Y$. Thus $\CO_{Y,q}$ is an integral domain (in fact
it is a DVR) with field of fractions $(\CO_{Y,q})_{s-1}$
(localization of $\CO_{Y,q}$ with respect to $s-1$). Hence
$\CO(Y_\eta)\otimes_{\CO(Y)}\CO_{Y,q}=\CCC(s)\otimes_R\CO_{Y,q}=(\CO_{Y,q})_{s-1}$
is a field.

For (b), let $\fm\subset\CO(Y)\subset\CO(Y_\eta)$ be the maximal
ideal of $q\in Y$ and let $\fn\subset\CO(Y_\eta)$ be the maximal
ideal of $\epsilon$. If $q$ belongs to the closure of $\epsilon$
then $\CO(Y)\setminus\fm\subseteq \CO(Y_\eta)\setminus\fn$, hence
$\CO_{Y_\eta,\epsilon}\otimes_{\CO(Y)}\CO_{Y,q}=\CO_{Y_\eta,\epsilon}$.
If $q$ does not belong to the closure of $\epsilon$ then there
exists $f\in \CO(Y)\setminus\fm$ such that $f\in\fn$. Thus
$1\otimes f\in\CO_{Y_\eta,\epsilon}\otimes_{\CO(Y)}\CO_{Y,q}$ must
be invertible and nilpotent at the same time (any element in
$\fn\CO_{Y_\eta,\epsilon}$ is nilpotent!), hence
$\CO_{Y_\eta,\epsilon}\otimes_{\CO(Y)}\CO_{Y,q}=0$ and we are
done.
\end{proof}

\section{Examples and Counter-Examples} \label{sec:examples}

In this section we prove Proposition~\ref{counter-ex} and
Corollary~\ref{cor:examlpes}. We first provide an example of a
polytope ${\Delta}$ such that the quantum homology subalgebra
$QH_8(X_{\Delta},\omega_0)$ of the corresponding (complex)
$4$-dimensional symplectic toric Fano manifold $X_{\Delta}$ is not
semisimple. Here $\omega_0$ is the distinguished (normalized)
monotone symplectic form on $X_{\Delta}$.

We start by making the identification \begin{equation}
\label{identification} Lie(T)^* = M_{\RR} \simeq \RR^d, \ Lie(T) =
N_{\RR} \simeq (\RR^d)^* \simeq \RR^d \end{equation}
 For technical reasons, it would be easier for us to describe the
vertices of the dual polytope ${\Delta}^*$, that are the
inward-pointing normals to the facets of $\Delta$. Let
$$ {\Delta}^* =  {\rm Conv}
\{e_1,e_2,e_3,e_4,-e_1+e_4,-e_2+e_4,e_2-e_4,-e_2,-e_4,-e_3-e_4
\},$$ where $\{e_1,e_2,e_3,e_4\}$ is the standard basis of
${\mathbb R}^4$. A straightforward computation, whose details we
omit (see remark below), shows that ${\Delta}$ is a Fano Delzant
polytope. We denote by $(X_{ \Delta},\omega_0)$ the corresponding
symplectic toric Fano manifold equipped with the canonical
symplectic form $\omega_0$.

\begin{remark} {\rm
Toric Fano 4-manifolds are completely classified (see
e.g.,~\cite{Bat1},~\cite{Sa}). We refer the reader to the software
package ``PALP "~\cite{KS} with which all the combinatorial data
of the 124 Toric Fano 4-dimensional polytopes can be explicitly
computed. The above example ${ \Delta}$ is the unique reflexive
4-dimensional polytope with 10 vertices, 24 Facets, 11 integer
points, and 59 dual integer points (the ``PALP'' search command
is: ``class.x -di x -He EH:M11V10N59F24L1000"), and it is listed
among the 124 examples in the webpage
%\begin{center}
    ``http://hep.itp.tuwien.ac.at/$\sim$kreuzer/CY/math/0702890/''.
%\end{center}
In Batyrev's classification~\cite{Bat1}, $X_{\Delta}$ appears
under the notation $U_8$ as example number $116$ in section $4$}.
\end{remark}

The corresponding Landau-Ginzburg super potential $W\colon
{\mathbb C}[x_1^{\pm},x_2^{\pm},x_3^{\pm},x_4^{\pm}] \rightarrow
{\mathbb C}$ is given by $$  W = x_1 + x_2 + x_3 + x_4
+ {\frac {1} {x_2}} +{\frac {1} {x_4}} +{\frac {1} {x_3x_4}} +
{\frac {x_4} {x_1}} +{\frac {x_4} {x_2}} + {\frac {x_2} {x_4}}
$$ The partial derivatives are
$$ W_{x_1} = 1- {\frac {x_4} {x_1^2}},  \ W_{x_2} = 1 - {\frac {x_4 + 1 } {x_2^2}} +{\frac 1 {x_4}}, \ W_{x_3} = 1 - {\frac {1} {x_4x_3^2}}, \ W_{x_4} = 1 + {\frac 1 {x_1}} + {\frac 1 {x_2}} - {\frac {x_2 x_3+x_3+1} {x_3x_4^2}}$$
It is easy to check that $x_0 = (-1,-1,-1,1)$ is a critical point of
$W$. On the other hand, the Hessian of W at the point $x_0$ \[ {\rm
Hess}(W_{X_{ \Delta}}(z_0)) = \left( \begin{array}{cccc}
-2 & 0 & 0  & -1\\
0  & -4  & 0 & -2 \\
0 & 0 & -2 & 1 \\
-1 & -2 & 1 & -2 \end{array} \right)\] has rank 3 and hence $x_0$ is
a degenerate critical point. Thus, it follows from
Proposition~\ref{prop:LGmodel}
 and Corollary~\ref{cor:jschemeGen} (i) that $QH^0(X_{\Delta},\omega_0)$
is not semisimple. From the quantum Poincar\'e duality described
in Subsection~\ref{qh-sym-def} we deduce that
$QH_8(X_{\Delta},\omega_0)$ is not semisimple and complete the
proof of Proposition~\ref{counter-ex}.

\begin{remark}
{\rm By taking the product $X_{\Delta} \times \PP^1$ equipped with
the symplectic form $\omega_0 \otimes \alpha \omega_{\PP^1}$, where
$\alpha > > 1$ and $\omega_{\PP^1}$ is the standard symplectic form
on $\PP^1$, we obtain non-monotone symplectic manifolds with non
semisimple quantum homology subalgebra $QH_{10}(X_{\Delta} \times
\PP^1)$}.
\end{remark}

We now turn to sketch of proof of Corollary~\ref{cor:examlpes}.
Note that the combination of McDuff's observation,
Theorem~\ref{thmin:dirsmd}, and Corollary~\ref{cor:jschemeGen} (ii)
reduces the question of the existence of a Calabi quasimorphism
and symplectic quasi-states on a symplectic toric manifold
$(X,\omega)$ to finding a non-degenerate critical point of the
Landau-Ginzburg superpotential corresponding to $(X,\omega_0)$,
where $\omega_0$ is the canonical symplectic form on $X$.

We start with the case of the symplectic blow up of $\PP^d$ at $d+1$ general points. After choosing homogeneous coordinates in an appropriate way we may assume that the $d+1$ points are the zero-dimensional orbits of the natural torus action, hence the blow up admits a structure of a toric variety. The corresponding superpotential
(in the monotone case) is given by
$$ W = \sum_{j=1}^d x_i  + \sum_{j=1}^d {\frac 1 {x_i}} + \prod_{j=1}^d x_i  +  \prod_{j=1}^d {\frac 1 {x_i}}$$
It is easy to check that $(-1,\ldots,-1)$ is a non-degenerate
critical point.

Similarly, for toric Fano 3-folds and 4-folds, one can directly
check (preferably using a computer) that the corresponding
superpotentials have non-degenerate critical points.\footnote{The
combinatorial data required to preform such a computation can be
found e.g. within the database
``http://hep.itp.tuwien.ac.at/~kreuzer/CY/math/0702890/"}

\section{Calabi quasimorphisms} \label{sec:non-unique}

The group-theoretic notion {\it quasimorphism} was originally
introduced with connection to bounded cohomology theory and since
then became an important tool in geometry, topology and dynamics
(see e.g.~\cite{K}). In the context of symplectic geometry, Entov
and Polterovich constructed certain homogeneous quasimorphisms,
called ``Calabi quasimorphism",  and showed several applications to
Hofer's geometry, $C^0$-symplectic topology, and Lagrangian
intersection theory (see e.g.~\cite{EP1},~\cite{EP4}).

We recall that a real-valued function $\Pi$ on a group $G$ is called
a {\it homogeneous quasimorphism} if there is a universal constant
$C>0$ such that for every $g_1, g_2 \in G$:
$$ | \Pi(g_1g_2) - \Pi(g_1)-\Pi(g_2) | \leq C, \ {\rm and} \
\Pi(g^k) = k\Pi(g) \ {\rm for \ every \ } k \in {\mathbb Z} \ {\rm
and} \ g \in G.$$

In~\cite{EP1}, Entov and Polterovich constructed quasimorphisms on
the universal cover ${\widetilde {Ham}} (X)$ of the group of
Hamiltonian diffeomorphisms of a symplectic manifold $X$ using Floer
theory. More precisely, by using spectral invariants which were
defined by Schwarz~\cite{Sc} in the aspherical case, and by
Oh~\cite{Oh1} for general symplectic manifolds (see also
Usher~\cite{Us}). These invariants are given by a map $ c \colon  QH^*(X)
\times {\widetilde {Ham}} (X) \rightarrow \RR$.
%where ${\widetilde {Ham}} (X)$ is the universal cover of the group
%of Hamiltonian diffeomorphisms of $X$.
We refer the reader
to~\cite{Oh1} and~\cite{MS} for the precise definition of the
spectral invariants and their properties.

Following~\cite{EP1}, for an idempotent $e \in QH^{0}(X,\omega)$ we define
$\mathcal{Q}_e \colon  {\widetilde {Ham}} (X) \rightarrow \RR$ by $
\mathcal{Q}_e = \overline c(e,\cdot)$, where $\overline c(e,\widetilde \phi) = \liminf_{k \rightarrow \infty} {\frac
{c(e,\widetilde \phi^k)} {k}}$ for all $\widetilde\phi \in {\widetilde {Ham}} (X).$
Entov and Polterovich showed that if $e QH^{0}(X,\omega)$ is a field then
$\mathcal{Q}_e$ is a homogenous quasimorphism (see~\cite{EP1} for
the monotone case and~\cite{O},~\cite{EP4} for the general case).
Moreover, $\mathcal{Q}_e$ satisfies the so called {\it Calabi
property}, which means, roughly speaking, that ``locally" it
coincides with the Calabi homomorphism (see~\cite{EP1} for the
precise definition and proof). A natural question raised in~\cite{EP1} asking whether such a
quasimorphism is unique.

Our goal in this section is to prove Corollary~\ref{cor:quasi-non-unique} which
shows that the answer to the question above is negative.
For this we will need some preparation. We start with the
following general property of the spectral invariants
(see~\cite{Oh1},\cite{EP1},\cite{MS}): for every $0 \neq a \in
QH^*(X,\omega)$ and $\gamma \in \pi_1 (Ham(X)) \subset {\widetilde {Ham}}
(X)$ the following holds: $c(a,\gamma) = c(a * {\cal S}(\gamma),\Id) = \log\left\|a * {\cal S}(\gamma)\right\|,$
where ${\cal S}(\gamma) \in  QH^{0}(X,\omega)$ is the Seidel element of $\gamma$ (see e.g.~\cite{MS} for the definition), and
$\|\cdot\|$ is the non-Archimedean norm discussed in Remark \ref{rem:normOnKandQH}. Thus, for every idempotent $e\in QH^0(X,\omega)$ and $\gamma \in \pi_1 (Ham(X))$, we have
\begin{equation} \label{quasi-on-pi1}
 {\cal Q}_{e}(\gamma) = \log\|e * {\cal S}(\gamma)\|_{\rm sp},
\end{equation}
where $\|\cdot\|_{\rm sp}$ is the corresponding non-Archimedean spectral seminorm (cf. subsection \ref{subsec:nonArch}).

Let now $(X_\Sigma,\omega)$ be a symplectic toric Fano manifold, and $F$ be a corresponding strictly convex piecewise linear function on $\Sigma$.
Consider the homomorphisms $\iota\colon N\to \KK[N]/J_W$, $W=W_{F,\Sigma}$, given by the composition
$$ N=\pi_1(T_N) \longrightarrow \pi_1(Ham(X_\Sigma)) \overset{\cal S}\longrightarrow QH^{0}(X_\Sigma,\omega)\simeq\KK[N]/J_W,$$
where ${\cal S}$ is the Seidel map (see e.g~\cite{MS}). By
translating a result of McDuff and Tolman (see Theorem $1.10$ and
Section 5.1 in~\cite{MT} and~\cite{MS} page $441$) to the
Landau-Ginzburg model using~$(\ref{from-Batyrev-to-LG})$, one obtains an explicit formula for $\iota$, namely $\iota(n)=x^n$. To any critical point $p\in Z_W$ one can assign the unit element $e_p\in \CO_{Z_W,p}$, which is an idempotent in $\CO(Z_W)\simeq QH^{0}(X_\Sigma,\omega)$. Furthermore, $e_p\CO(Z_W)=\CO_{Z_W,p}$ is a field if and only if $p$ is a non-degenerate critical point of $W$.
Thus it is sufficient to find two non-degenerate critical points of the superpotential $p,p'\in Z_W$ and $n\in N$ such that
$|x^n(p)|\ne |x^n(p')|$, thanks to Corollary \ref{cor:aboutval} and $(\ref{quasi-on-pi1})$.
%$$\log|x^n(p)|=\log\|e_px^n\|_{\rm sp}={\cal Q}_{e_p}(x^n)\ne {\cal Q}_{e_{p'}}(x^n)=\log\|e_{p'}x^n\|_{\rm sp}=\log|x^n(p')|.$$

Let $(X_\Sigma,F)$ be the blow up of $\PP^2$ at one point equipped with a strictly convex piecewise linear function $F$, or equivalently, with a symplectic form $\omega$ and a moment map $\mu$. After adding a global linear function to $F$ (this operation changes $\mu$, but does not change $\omega$) we may assume that $F(1,0)=0$, $F(0,1)=0$, $F(0,-1)=\beta-\alpha$, and $F(-1,-1)=-\alpha$, where $\alpha>\beta>0$. It is easy to check that $QH^0(X_\Sigma,\omega_0)$ is semisimple, since the superpotential $W_0$ has only non-degenerate critical points. Thus $QH^0(X_\Sigma,\omega)$ is semisimple by Theorem~\ref{thmin:sms}, and $W$ has only non-degenerate critical points.

Recall that the fan $\Sigma$ has four rays generated by $(1,0),(0,1),(0,-1),(-1,-1)$. Set $x_1=x^{(-1,0)}$ and $x_2=x^{(0,1)}$. Then $W=x^{-1}_1+x_2+s^{\beta-\alpha}x_2^{-1}+s^{-\alpha}x_1x_2^{-1},$ and the scheme $Z_W$ of its critical points is given by $-x^{-1}_1+s^{-\alpha} x_1x_2^{-1}=x_2-s^{\beta-\alpha}x_2^{-1}-s^{-\alpha}x_1x_2^{-1}=0,$
or equivalently, $x_1^{4}-s^\alpha x_1-s^{(\beta+\alpha)}=0$ and $x_2=s^{-\alpha} x_1^2$. Assume for simplicity that $\alpha,\beta\in\QQ$. Notice that the Newton diagram of $x_1^{4}-s^\alpha x_1-s^{(\beta+\alpha)}=0$ has two faces if and only if $\alpha<3\beta$; otherwise it has unique face. It is classically known that solutions of such equation correspond to the faces of the Newton diagram; each solution can be written as a Puiseux series in $s$ with non-Archimedean valuation $-l$, where $l$ is the slope of the corresponding face; and the number of solutions (counted with multiplicities) corresponding to a given face is equal to the change of $x_1$ along the face. Thus if $\alpha<3\beta$ and $n=(-1,0)$ then there exist non-degenerate critical points $p,p'\in Z_W$ such that $|x^n(p)|=10^{3/\alpha}\ne 10^{1/\beta}=|x^n(p')|$. Notice that $\omega(L)/\omega(E)=\alpha/\beta$. Corollary~\ref{cor:quasi-non-unique} now follows.

\section{The Critical Values of the Superpotential } \label{sec:mult-by-c1}
Let $(X_\Sigma,F)$ be a smooth toric Fano variety equipped with a strictly convex piecewise linear function $F$, or equivalently, with a symplectic form $\omega$ and a moment map $\mu$. Recall that $c_1(X_\Sigma)$ in Batyrev's description of the (quantum) cohomology is given by
$\sum_{\rho\in\Sigma^1}z_\rho$. Thus, using~$(\ref{from-Batyrev-to-LG})$ to identify Batyrev's description with the Landau-Ginzburg model, one obtains the following formula: $c_1(X_\Sigma)=\sum_{\rho\in\Sigma^1}qs^{F(n_\rho)}x^{n_\rho}=qW$, $W=W_{F,\Sigma}$; hence
$$q^{-1}c_1(X_\Sigma)=W\in\KK[N]/J_W=QH^0(X_\Sigma,\omega).$$
Thus the set of critical values of the superpotential $W$ is equal to the set of eigenvalues of multiplication by $q^{-1}c_1(X_\Sigma)$ on $QH^0(X_\Sigma,\omega)$ by Corollary \ref{cor:aboutval};
which proves Corollary~\ref{cor:mult-by-c1}.

\section*{Appendix: Toric varieties.}
Here we shortly summarize the part of the theory of toric
varieties relevant to our paper. We recall the basic definitions
and some fundamental results (without proofs). The detailed
development of the theory can be found in Fulton's
book~\cite{F93} and in Danilov's survey~\cite{D78}.

As before, throughout the appendix $M$ denotes a lattice of rank $d$, $N=\Hom_\ZZ(M,\ZZ)$
denotes the dual lattice, and $M_\RR=M\otimes_\ZZ\RR$ and $N_\RR=N\otimes_\ZZ\RR$ denote the corresponding pair of dual vector spaces.

\subsection*{Definition of toric varieties and orbit decomposition.}
The references for this subsection are \cite{D78} \S 1, 2, 5, and
\cite{F93} sections 1.2-1.4, 2.1, 2.2, 2.4, 3.1.

A subset $\sigma \subset N_\RR$ is called a {\it rational,
polyhedral cone} if $\sigma$ is a positive span of finitely many
vectors $n_i\in N$, i.e. $\sigma=\Span_{\RR_+}\{n_1,\dotsc ,n_k\}$,
$n_i\in N$. It is not hard to check that $\sigma$ is a rational,
polyhedral cone if and only if there exist $m_1,\dotsc ,m_l\in
M\subset \Hom(N_\RR,\RR)$ such that
$\sigma=\cap_{i=1}^lm_i^{-1}(\RR_+)$. A rational, polyhedral cone
$\sigma$ is called strictly convex if it contains no lines, i.e.
$\sigma \cap (- \sigma) = \{0\}$. For a rational, polyhedral cone
$\sigma\subset N_\RR$ we define the dual cone $\check{\sigma}$ to
be $\check{\sigma} = \{ m \in M_\RR \ | (m, n)\ge 0 \,\,\forall
n\in\sigma \}$, which is again rational and polyhedral. A {\it
face} $\tau$ of a rational, polyhedral cone $\sigma\subset N_\RR$
is defined to be the intersection of $\sigma$ with a supporting
hyperplane, i.e. $\tau=\sigma\cap\Ker(m)$ for some $m\in M_\RR$.
It is easy to see that a face of a (strictly convex) rational,
polyhedral cone is again a (strictly convex) rational, polyhedral
cone. Faces of codimension one are called {\it facets}.

For a strictly convex, rational, polyhedral cone $\sigma$ one can
assign the commutative semigroup $M\cap \check{\sigma}$. Notice
that since $\check{\sigma}$ is rational and polyhedral this
semigroup is finitely generated, hence the semigroup algebra
$\FF[M\cap \check{\sigma}]$ is also finitely generated. We define
affine toric variety $X_\sigma$ over $\FF$ to be $X_\sigma=\Spec\,\FF[M\cap
\check{\sigma}]$. If $\tau\subseteq\sigma$ is a face then
$X_\tau\hookrightarrow X_\sigma$ is an open subvariety. In
particular, since $\sigma$ is strictly convex, the affine toric
variety $X_\sigma$ contains the torus
$X_{\{0\}}=\Spec\,\FF[M]=N\otimes_\ZZ\FF^*=T_N$ as a dense open
subset. Furthermore, the action of torus on itself extends to the
action on $X_\sigma$.

A collection $\Sigma$ of strictly convex, rational, polyhedral
cones in $N_\RR$ is called a {\it fan} if the following two
conditions hold:
\begin{enumerate}
\item If $\sigma\in \Sigma$ and $\tau\subseteq\sigma$ is a face
then $\tau\in \Sigma$. \item If $\sigma,\tau\in \Sigma$ then
$\sigma\cap\tau$ is a common face of $\sigma$ and $\tau$.
\end{enumerate}
A fan $\Sigma$ is called {\it complete} if $\cup_{\sigma\in\Sigma}
\, \sigma=N_\RR$. The set of cones of dimension $k$ in $\Sigma$ is
denoted by $\Sigma^k$, and one-dimensional cones in $\Sigma$ are
called {\it rays}. The primitive integral vector along a ray
$\rho$ is denoted by $n_\rho$.

Given a (complete) fan $\Sigma$ one can construct a (complete)
toric variety $X_\Sigma=\cup_{\sigma\in\Sigma}X_\sigma$ by gluing
$X_\sigma$ and $X_\tau$ along $X_{\sigma\cap\tau}$. Recall that
$X_\Sigma$ has only orbifold singularities if and only if all the
cones in $\Sigma$ are simplicial  (in this case it is called
quasi-smooth); and $X_\Sigma$ is smooth if and only if for any
cone $\sigma\in\Sigma$ the set of primitive integral vectors along
the rays of $\sigma$ forms a part of a basis of the lattice $N$.

The torus $T_N$ acts on $X_\Sigma$ and decomposes it into a
disjoint union of orbits. To a cone $\sigma\in\Sigma$ one can
assign an orbit $O_\sigma\subset X_\sigma$, canonically isomorphic
to $\Spec\,\FF[M\cap\sigma^\bot]$. This defines a one-to-one
order reversing correspondence between the cones in $\Sigma$ and
the orbits in $X_\Sigma$. In particular orbits of codimension one
correspond to rays $\rho\in\Sigma$ and we denote their closures by
$D_\rho$. Thus $\{D_\rho\}_{\rho\in\Sigma^1}$ is the set of
$T_N$-equivariant primitive Weil divisors\footnote{Recall that if
$X$ is a singular variety then one must distinguish between Weil
divisors (i.e. formal finite sums of irreducible subvarieties of
codimension one) and Cartier divisors (i.e. global sections of the
sheaf $\CK^*_X/\CO^*_X$, or equivalently, invertible
subsheaves(=line subbundles) of $\CK$, where $\CK$ denotes the
sheaf of rational functions on $X$). There is a natural
homomorphism $Cartier(X)\to Weil(X)$ and the corresponding
homomorphism between the class groups of divisors $Pic(X)\to
Cl(X)$, however these maps in general need not be surjective or
injective, but for smooth varieties these are isomorphisms. For
any toric variety $X$ these maps are injective, since $X$ is
normal. If in addition $X$ is quasi-smooth then at least
$Pic(X)\otimes_\ZZ\QQ\to Cl(X)\otimes_\ZZ\QQ$ is an isomorphism.}
 on the variety
$X_\Sigma$.

\subsection*{Line bundles on toric varieties.}

The references for this subsection are~\cite{D78} \S 6, 5.8,
and~\cite{F93} sections 3.4, and 1.5.

Let $\Sigma$ be a fan in $N_\RR$ and let $X_\Sigma$ be the
corresponding toric variety. Let $\CL$ be an algebraic line bundle
on $X_\Sigma$. By a trivialization of $\CL$ we mean an isomorphism
$\phi \colon  \CL_{|_{T_N}}\to \CO_{T_N}$ considered up-to the natural
action of $\FF^*$. Recall that any algebraic line bundle on a
torus is trivial, hence any algebraic line bundle $\CL$ on
$X_\Sigma$ can be equipped with a trivialization. To a pair
$(\CL,\phi)$ one can assign a piecewise linear integral function
$F$ on the fan $\Sigma$ (i.e. a function $F$ such that
$F_{|_\sigma}$ is linear for any $\sigma\in\Sigma$ and
$F(N)\subset\ZZ$). This defines a bijective homomorphism between
the group (with respect to the tensor product) of pairs
$(\CL,\phi)$ and the additive group of piecewise linear functions
$F$ as above:
$$F\longleftrightarrow \CO(-\sum_{\rho\in\Sigma^1}F(n_\rho)D_\rho).$$
Furthermore, a change of the trivialization corresponds to adding
a global integral linear function to $F$. In the language of
divisors one can rephrase the above correspondence  as follows:
real/rational/integral piecewise linear functions on the fan
$\Sigma$ are in one-to-one correspondence with
$\RR$/$\QQ$/$\ZZ$-Cartier $T_N$-equivariant divisors. Such
divisors will be called $T$-divisors.

Let $(\CL,\phi)$ be a $T-$divisor, and let $F$ be a corresponding
function. Then $\CL$ is globally generated if and only if $F$ is
convex (i.e. $F(tn+(1-t)n')\ge tF(n)+(1-t)F(n')$ for all $n,n'\in
N_\RR$ and $0\le t\le 1$), and $\CL$ is ample if and only if $F$
is strictly convex (i.e. $F$ is convex, and its maximal linearity
domains are cones in $\Sigma$). Let us now describe the global
sections of $\CL$ in terms of $F$. Any section is completely
determined by its restriction to the big orbit which can be
identified by $\phi$ with an element of $\FF[M]$. Under this
identification the set of global sections of $\CL$ is canonically
isomorphic (up-to the action of $\FF^*$) to the vector space $\Span_\FF\{x^m\}_{m\in
M\cap\Delta_F}$ where $\Delta_F=\Delta_{(\CL,\phi)}=\{m\in
M_\RR\,|\, (m,n_\rho)\ge F(n_\rho) \ {\rm for \ every \ } \rho
\}$. If one changes the trivialization then $\Delta_F$ is
translated by the corresponding element of $M$.

Notice that if $\CL$ is ample then one can reconstruct the fan
$\Sigma$ from the polytope $\Delta_F$. Namely, cones in $\Sigma$
are in one-to-one order reversing correspondence with the faces of
$\Delta_F$. To a face $\gamma\subseteq\Delta_F$ we assign the cone
$\sigma$ being the dual cone to the inner angle of $\Delta_F$ at
$\gamma$ (see \cite{D78} \S 5.8). Furthermore, if $m$ is a vertex
of $\Delta_F$ and $\sigma_m\in\Sigma$ is the corresponding cone,
then $F_{|_{\sigma_m}}=m$. Thus $F$ can also be reconstructed from
the polytope $\Delta_F$.

Recall that the orbits in $X_\Sigma$ are in one-to-one order
reversing correspondence with the cones in $\Sigma$, hence they
are in one-to-one order preserving correspondence with the faces
of $\Delta_F$. Let $\gamma$ be a face of $\Delta_F$, let
$\sigma_\gamma\in\Sigma$ be the corresponding cone, and let
$V=\overline{O}_{\sigma_\gamma}$ be the closure of the
corresponding orbit. Then $V$ has a natural structure of a toric
variety, and the restriction of $\CL$ to $V$ is an ample line
bundle on $V$ defined by the polytope $\gamma-p\subset
\sigma_\gamma^\perp$, where $p\in\gamma$ is any fixed vertex (the
restriction of a trivialized bundle is no longer a trivialized
bundle, this is the reason why one must choose $p$).

%Assume now that we are given a projective toric variety $X_\Sigma$
%and an ample $T$-divisor $(\CL,\phi)$ associated to a function
%$F$. Then $X_\Sigma$ is quasi-smooth if and only if any vertex of
%$\Delta_F$ has valency $d=\rank M$; and $X_\Sigma$ is smooth if
%and only if at any vertex $v$ of $\Delta_F$ the following holds:
%the set of primitive vectors along the edges containing $v$ is a
%basis of $M$.

\subsection*{Symplectic structure.}

Throughout this subsection the base field is $\FF=\CCC$.
Given an ample $T$-divisor $(\CL,\phi)$ on a toric variety
$X_\Sigma$, one can assign to it a symplectic form
$\omega_{\CL,\phi}$ in the following way: first notice that $\phi$
defines a distinguished (up-to the action of a symmetric group and
up-to a common multiplicative factor) basis in $H^0(X_\Sigma,
\CL^{\otimes r})$ for any $r$. Let
$X_\Sigma\hookrightarrow\PP=\PP(H^0(X_\Sigma, \CL^{\otimes r})^*)$
be the natural embedding (where $r$ is assumed to be large enough).
Recall that projective spaces have canonical symplectic structures
provided by the Fubini-Study forms. Now we simply pull back the
Fubini-Study symplectic form of volume $1$ from $\PP$ to $X_\Sigma$,
and since it is invariant under the action of the symmetric group,
we get a well defined symplectic form on $X_\Sigma$. To make this
construction independent of $r$ and to make the moment polytope compatible with $\Delta_{(\CL,\phi)}$ all we have to do is to multiply the
form by $\frac{2\pi}{r}$. We denote this normalized symplectic form by
$\omega_{\CL,\phi}$ or $\omega_F$ if $F$ is the strictly convex
piecewise linear function associated to $(\CL,\phi)$. Thus
$(\CL,\phi)$ defines the structure of a symplectic toric manifold on
$X_\Sigma$. Furthermore, the action of the compact torus
$T=N\otimes_\ZZ S^1\subset N\otimes_\ZZ\CCC^*=T_N$ is Hamiltonian.
Such a manifold admits a moment map $\mu_{\omega_F} \colon  X\to
Lie(T)^*=M_\RR$. In our case $\mu_{\omega_F}$ is defined by
$$\mu_{\omega_F}(p)=\frac{\sum_{m\in
\Delta_F}|x^m(p)|^2 \, m}{\sum_{m\in \Delta_F}|x^m(p)|^2},$$ and
its image is the polytope $\Delta_F=\Delta_{(\CL,\phi)}$ (cf. \cite{F93} sections 4.1
and 4.2).

\subsection*{Differential Log-forms and the Canonical Class.}

The references for this subsection are \cite{D78} \S 15, and
\cite{F93} sections 4.3.

Let $\Sigma$ be a fan in $N_\RR$ and let $X_\Sigma$ be the
corresponding toric variety. By a log-form we mean a rational
differential 1-form having at worst simple poles along the
components of $X_\Sigma\setminus T_N$. Recall that the sheaf
$\Omega^1_{X_\Sigma}(log)$ of log-forms is trivial vector bundle
canonically isomorphic to $M\otimes_\ZZ\CO_{X_\Sigma}$ (we assign
to $m\in M$ the form $\frac{dx^m}{x^m}$). Moreover there exists an
exact sequence $0\to \Omega^1_{X_\Sigma}\to
\Omega^1_{X_\Sigma}(log)\to \oplus_{\rho\in
\Sigma^1}\CO_{D_\rho}\to 0,$ where the last map is the sum of
residues. It follows from the exact sequence above that
$K_\Sigma=-\sum_{\rho\in\Sigma^1}D_\rho$ is the canonical (Weil)
divisor on $X_\Sigma$. If canonical divisor is $\QQ-$Cartier (e.g.
$X_\Sigma$ is quasi-smooth) then the canonical divisor corresponds
to the rational piecewise linear function $F_K$ defined by the
following property: $F_K(n_\rho)=1$ for any $\rho\in\Sigma^1$.

The dual notion to a log-differential form is a log-derivative.
Log-vector fields also form a trivial vector bundle canonically
isomorphic to $N\otimes_\ZZ\CO_{X_\Sigma}$, namely to any $n\in N$
corresponds the log-derivative $\partial_n$ defined by
$\partial_nx^m=(m,n)x^m$. The notion of log-derivative will be
useful in this paper to make proofs coordinate free.

\bigskip
\noindent Yaron Ostrover\\
Department of Mathematics, M.I.T, Cambridge MA 02139, USA\\
{\it e-mail}: ostrover@math.mit.edu

\bigskip
\noindent Ilya Tyomkin\\
Department of Mathematics, M.I.T, Cambridge MA 02139, USA\\
{\it e-mail}: tyomkin@math.mit.edu

\end{document}